\documentclass[11pt,reqno]{amsart}
\usepackage{amsmath}
\usepackage{amssymb}
\usepackage{amsthm}
\usepackage{enumerate}
\usepackage[usenames]{color}
\usepackage{eepic,epic}
\usepackage{url} 
\usepackage{makecell}
\usepackage{epstopdf}

 \usepackage{algorithm,algorithmic}

\usepackage{multirow}
\usepackage{graphicx}
\usepackage{kotex}



\textheight 23.5  true cm
\textwidth 15 true cm
\voffset -1.5 true cm
\hoffset -1.0 true cm
\marginparwidth= 2 true cm


\newtheorem{thm}{Theorem}[section]

\newtheorem{lem}[thm]{Lemma}
\newtheorem{prop}[thm]{Proposition}

\theoremstyle{definition}
\newtheorem{defn}[thm]{Definition}

\newtheorem{ass}{Assumption}
\theoremstyle{remark}
\newtheorem{rem}[thm]{Remark}
\numberwithin{equation}{section}

\newcommand{\ep}{\epsilon}



\begin{document}

\title[]
{Convergence result for the gradient-push algorithm  and its application to boost up the Push-DIging algorithm}

\author{Hyogi Choi}
\address{Department of Mathematics, Sungkyunkwan University, Suwon 440-746, Republic of Korea}
\email{dyddmlsns1@skku.edu}

\author{Woocheol Choi}
\address{Department of Mathematics, Sungkyunkwan University, Suwon 440-746, Republic of Korea}
\email{choiwc@skku.edu}

\author{Gwangil Kim}
\address{Department of Mathematics, Sungkyunkwan University, Suwon 440-746, Republic of Korea}
\email{nice1708@skku.edu}
 
\subjclass[2010]{Primary  }

\keywords{Mirror Descent}

\maketitle

\begin{abstract} The gradient-push algorithm is a fundamental algorithm for the distributed optimization problem 
\begin{equation}
\min_{x \in \mathbb{R}^d} f(x) = \sum_{j=1}^n f_j (x),
\end{equation}
where each local cost $f_j$ is only known to agent $a_i$ for $1 \leq i \leq n$ and the agents are connected by a directed graph. In this paper, we obtain convergence results for the gradient-push algorithm with constant stepsize whose range is sharp in terms the order of the smoothness constant $L>0$. Precisely, under the two settings: 1) Each local cost $f_i$ is strongly convex and $L$-smooth, 2) Each local cost $f_i$ is convex quadratic and $L$-smooth while the aggregate cost $f$ is strongly convex, we show that the gradient-push algorithm with stepsize $\alpha>0$ converges to an $O(\alpha)$-neighborhood of the minimizer of $f$ for a range $\alpha \in (0, c/L]$ with a value $c>0$ independent of $L>0$. As a benefit of the result, we suggest a hybrid algorithm that performs the gradient-push algorithm with a relatively large stepsize $\alpha>0$ for a number of iterations and then go over to perform the Push-DIGing algorithm. It is verified by a numerical test that the hybrid algorithm enhances the performance of the Push-DIGing algorithm significantly. The convergence results of the gradient-push algorithm are also supported by numerical tests.
\end{abstract}


\section{Introduction}\label{sec-1}
In this paper, we consider the distributed optimization problem:
\begin{equation} \label{eq-1-1}
\min_{x \in \mathbb{R}^d} f(x) = \sum_{j=1}^n f_j(x),
\end{equation} 
where $n$ denotes the number of agents in the network, and $f_i: \mathbb{R}^d \rightarrow \mathbb{R}$ represents the local objective function available only to agent $i$ for each $1 \leq i \leq n$. {Each agent can communicate with its neighboring agents in the network, which can be represented as a graph.} In this work, we consider the case that $f$ is stronlgy convex, and so there exists a unique minimizer $x_*$ of the problem \eqref{eq-1-1}.

Distributed optimization has been widely studied recently since it appears in various applications such as wireless sensor networks \cite{BG, SGRR}, multi-agent control \cite{MC1, MC2, MC3}, smart grids \cite{SG1, SG2}, and machine learning \cite{ML1, ML2, ML3, ML4, ML5}. We refer to  \cite{NO1,   NOS, QL, SLWY, XK2} and references therein regarding various algorithms for distributed optimization.


Frequently in real-world scenarios, the communication between agents is described as a directed network because the communication powers of agents are different, leading to one-way interactions between agents.
In the fundamental work,  Nedic {and} Olshevsky \cite{NO1, NO2} introduced the gradient-push algorithm for the distributed optimization on the directed graph making use of the gradient-push protocol  \cite{KDG, TLR2}. The gradient-push algorithm is formulated as follows: 
\begin{equation} \label{eq-1-2}
\begin{split}
&w_i(t+1) = \sum_{j=1}^n W_{ij}x_j(t)
\\
&y_i(t+1) = \sum_{j=1}^n W_{ij}y_j(t)
\\
&z_i(t+1) = \frac{w_i(t+1)}{y_i(t+1)}
\\
&x_i(t+1) =w_i(t+1) -\alpha(t)\nabla f_i\big(z_i(t+1)\big),
\end{split}
\end{equation}
endowed with initial data $x_i(0) \in \mathbb{R}^d$ and $y_i(0)=1$ for each $1 \leq i \leq n$ and $t\geq 0$. Here $W_{ij} \geq 0$ is a communication weight and $\alpha (t) >0$ is stepsize. We define a directed graph $G = (V, E)$ with the vertex set $V = {1, 2, \cdots , n}$ and the edge set $E \subset V \times V$ , where $(i, j) \in E$ means that there exist a directed edge from agent $j$ toward $i$. We assume that $W_{ij}>0$ if $(i,j) \in E$ or $i=j$ and $W_{ij}=0$ otherwise. A popular choice is given by $W_{ij} =1/(|N_j|+1)$ for $i \in N_j \cup \{j\}$ where $N_j = \{i|(i,j) \in E\}$.  Throughout this paper, we will focus on the constant stepsize $\alpha(t)=\alpha$ and the graph $G$ is assumed to satisfy the following assumptions.

\begin{ass}\label{ass-1} Graph $G$ is strongly connected.
 When $G$ is strongly connected, then it contains directed edge $(i,j) \in V \times V $ for all $i,j$ 
\end{ass} 
\begin{ass}\label{ass-2}the matrix $W = \{W_{ij}\}_{1 \leq i,j \leq n}$ is column stochastic, which means
\begin{equation*}
\sum_{k=1}^nW_{kj}=1\label{eq-1-8}.
\end{equation*}
\end{ass}
\begin{ass} Each local funcition $f_i$ satisfies $L_i$-smoothness for some $L_i >0$, i.e., 
\begin{equation*}
\|\nabla f_i (x) - \nabla f_i (y) \|\leq L_i \|x-y\| \quad \forall~x,y \in \mathbb{R}^d.
\end{equation*}
\end{ass}
 

   
 The gradient-push algorithm was extensively studied by many researchers for various settings such as online distributed optimization \cite{AGL, LGW,O3}, the gradient-push algorithm involving the quantized communications \cite{Q2}, zeroth-order distributed optimization \cite{GF}, and the gradient-push algorithm with the event-{triggered} communication  \cite{CK2}. The work \cite{C1} applied the gradient-push algorithm for distributed optimization problem with constraints. The work \cite{AGP} studied the asynchronous version of the gradient-push algorithm.

\subsection{New results for the gradient-push algorithm with constant stepsize}
Despite its wide applications, the convergence property of the gradient-push algorithm has not been fully understood. 
The works \cite{NO1, NO3, NO2, TT} studied the convergence property of the gradient-push algorithm after assuming the boundedness of either the state variable or the gradient for the convergence analysis. 
 The work \cite{NO2} established the convergence result for convex cost functions when the stepsize $\alpha (t)$ is given as $\alpha (t) = 1/\sqrt{t}$ or satisfies the condition $\sum_{t=1}^{\infty}\alpha (t) =\infty$ and $\sum_{t=1}^{\infty}\alpha (t)^2 <\infty$. The work \cite{NO3} obtained the convergence result when the cost function is strongly convex, and $\alpha (t) = c/t$ for some $c>0$. We also refer to the work \cite{TT} where the authors obtained a convergence result for non-convex cost functions. We would like to mention that the convergence results in the previous works \cite{NO1, NO2, TT} rely on \emph{a priori} assumption that  the {gradients} of the cost functions are uniformly bounded.
 
 In the recent work \cite{CKY}, the authors proved that for each $1\leq i \leq n$, the sequence $\{z_i (t)\}_{t \geq 0}$  of the gradient-push algorithm \eqref{eq-1-2} with constant stepsize $\alpha >0$ converges linearly to an $O(\alpha)$-neighborhood of the \mbox{minimizer $x_*$} when each cost $f_j$ is $L$-smooth and $f$ is $\beta$-strongly convex, if the stepsize satisfies $\alpha \leq \frac{c}{L^2}$ where the value $c>0$ is deterimined by $\beta$ and $W$. We refer to Theorem \ref{thm-2-1} below for the full detail. Having this result, one may wonder the full range of $\alpha >0$ for which this kind of convergence result holds for  the gradient-push algorithm \eqref{eq-1-2}. 
To discuss this issue, we introduce the following definition. 
\begin{defn} We denote by $\alpha_{GP} >0$   the largest constant such that for $\alpha \in (0, \alpha_{GP}]$ the sequence $\{z_i (t)\}_{t \geq 0}$ of the gradient-push algorithm converges linearly up to an $O(\alpha)$-neighborhood of the minimizer for each $1 \leq i \leq n$.
\end{defn} 
It is worth to mention that if the matrix $W$ is symmetric and doubly stochastic, then the gradient-push becomes the diffusion algorithm \cite{CS} which closely resembles the distributed gradient descent algorithm. The result \cite{YLY} showed that the DGD algorithm converges to an $O(\alpha)$-neighborhood of the minimizer linearly if each local cost is convex and $L$-smooth and the aggregate cost is strongly convex when the stepsize satisfies $\alpha \leq \frac{c}{L}$ with a specific value $c>0$ independent of $L>0$. A similar result was obtained in \cite{BBKW} for the diffusion algorithm. These results leads one to consider the following problem for the gradient-push algorithm.

\smallskip

\noindent \textbf{Open problem.} Is there a class of cost functions such that $\alpha_{GP} \geq \frac{c}{L}$ holds with a constant $c>0$ independent of $L>0$?

\smallskip


We mention that the proof of the result of \cite{YLY} heavily relies on a  reformulation of the DGD by a gradient descent method of a cost defined on the product space $(\mathbb{R}^{d})^n$. 
This approach is no longer applicable for the gradient-push algorithm, which makes the problem difficult.

In this work, we introduce an essential framework for analyzing the gradient-push algorithm. Consequently, we settle down the above problem for two popular classes of the cost functions. To state the result, we recall a well-known result (see \cite{XXK}) that under Assumptions \ref{ass-1} and \ref{ass-2}, the matrix W = $\{W_{ij}\}_{1 \leq i,j \leq n}$ has a  right eigenvector $\pi$ =$(\pi_1 \cdots \pi_n)^\top $ associated with the eigenvalue 1 satisfying $\sum_{j=1}^n \pi_j =1$ and
\begin{equation*}
\sum_{j=1}^n W_{kj}\pi_j = \pi_{k}\quad \forall~1\leq k \leq n. \label{eq-1-7}
\end{equation*} 
The following is the main result of this paper answering the above open problem.
\begin{thm}\label{thm-1-2}Suppose  one of the following conditions holds true: 
\begin{enumerate}
\item[Case 1:] Each cost $f_k$ is $\mu_k$-strongly convex and $L_k$-smooth for $1 \leq k \leq n$. The stepsize is assumed to satisfy $\alpha \in (0, \alpha_0]$ with  $\alpha_0 =\min_{1 \leq k \leq n} \frac{2n\pi_k}{L_k +\mu_k}$.  
\item[Case 2:] Each cost $f_j$ is given by convex quadratic form as $f_j(x) = \frac{1}{2}x^\top A_jx +B_jx$ for  a symmetric positive semi-definite matrix $A_j \geq$ 0 and $B_j  \in \mathbb{R}^{1\times d}$ and the aggregate cost $f$= $\frac{1}{n}\sum_{k=1}^n f_k$ is $\mu$-strongly convex. 
The stepsize is assumed to satisfy $\alpha \in (0,\alpha_0]$ with  $\alpha_0 = \min_{1 \leq k \leq n}\frac{2n\pi_k}{L_k+\epsilon}$ for any fixed $\ep>0$. Here the smoothness constant $L_k$ of cost $f_k$ is given by the largest eigenvalue of $A_k.$
\end{enumerate} 
Then for each $1\leq k \leq n$, the sequence $\{z_k (t)\}_{t \geq 0}$ of the gradient-push algorithm \eqref{eq-1-2} with stepsize  $\alpha \in (0,\alpha_0]$  converges linearly to an $O(\alpha)$-neighborhood of the \mbox{minimizer of $f$.} 
\end{thm}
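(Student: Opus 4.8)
The plan is to reformulate the gradient-push iteration as a \emph{perturbed, per-agent gradient step mixed by a row-stochastic matrix}, and then to read off a contraction toward $x_*$ from this structure. First I would observe that the fourth line of \eqref{eq-1-2} can be rewritten, using $w_j(t)=y_j(t)z_j(t)$, as
\begin{equation*}
x_j(t)=y_j(t)\Big(z_j(t)-\tfrac{\alpha}{y_j(t)}\nabla f_j(z_j(t))\Big)=y_j(t)\,T_j^{(t)}(z_j(t)),
\end{equation*}
where $T_j^{(t)}(v):=v-s_j(t)\nabla f_j(v)$ is the ordinary gradient operator of $f_j$ with the \emph{effective stepsize} $s_j(t):=\alpha/y_j(t)$. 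Substituting this into $w_k(t+1)=\sum_j W_{kj}x_j(t)$ and using the identity $\sum_j W_{kj}y_j(t)=y_k(t+1)$, a short computation gives the exact recursion
\begin{equation*}
z_k(t+1)-x_*=\sum_{j=1}^n P_{kj}(t)\,\eta_j(t),\qquad P_{kj}(t):=\frac{W_{kj}y_j(t)}{y_k(t+1)},\quad \eta_j(t):=T_j^{(t)}(z_j(t))-x_*.
\end{equation*}
The crucial point of this framework is that $P(t)=\{P_{kj}(t)\}$ is \emph{row-stochastic} with nonnegative entries, since $\sum_j P_{kj}(t)=y_k(t+1)/y_k(t+1)=1$; all of the optimization information is now carried by the per-agent residuals $\eta_j(t)$.

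For Case 1 I would estimate $\eta_j(t)$ by splitting $\eta_j(t)=\big(T_j^{(t)}(z_j(t))-T_j^{(t)}(x_*)\big)+\big(T_j^{(t)}(x_*)-x_*\big)$. The first bracket is controlled by the standard co-coercivity bound for a $\mu_j$-strongly convex, $L_j$-smooth $f_j$: for $s_j(t)\le 2/(L_j+\mu_j)$ the operator $T_j^{(t)}$ is a contraction with factor $\rho_j(t)=\max(|1-s_j(t)\mu_j|,|1-s_j(t)L_j|)<1$, while the second bracket equals $-s_j(t)\nabla f_j(x_*)$ and is of size $O(\alpha)$. Feeding $\|\eta_j(t)\|\le \rho_j(t)\|z_j(t)-x_*\|+s_j(t)\|\nabla f_j(x_*)\|$ into the row-stochastic recursion and setting $E(t):=\max_k\|z_k(t)-x_*\|$ yields $E(t+1)\le \big(\max_j\rho_j(t)\big)E(t)+C\alpha$. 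Since $\alpha\le\alpha_0=\min_k \tfrac{2n\pi_k}{L_k+\mu_k}$ forces the limiting step $s_j(\infty)=\alpha/(n\pi_j)\le 2/(L_j+\mu_j)$, the factor settles strictly below $1$, which is exactly the sharp $O(1/L)$ threshold; geometric convergence of $E(t)$ to an $O(\alpha)$-neighborhood follows.

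For Case 2 the residual is exactly affine, $\eta_j(t)=(I-s_j(t)A_j)(z_j(t)-x_*)-s_j(t)\nabla f_j(x_*)$, so the recursion becomes a genuine affine system $e(t+1)=M(t)e(t)+b(t)$ for the stacked error $e(t)=(z_k(t)-x_*)_k$, with block entries $M(t)_{kj}=P_{kj}(t)(I-s_j(t)A_j)$ and $\|b(t)\|=O(\alpha)$. Here the per-agent maps are only nonexpansive (each $A_j$ may be singular, $\mu_j=0$), so the max-norm argument of Case 1 fails and one must instead analyze the limiting matrix $M_\infty$ obtained from $s_j=\alpha/(n\pi_j)$ and $P_{\infty,kj}=W_{kj}\pi_j/\pi_k$. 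I would reduce the theorem to showing $\rho(M_\infty)<1$: the choice $\alpha\le\alpha_0=\min_k\tfrac{2n\pi_k}{L_k+\epsilon}$ guarantees $s_jL_j<2$, so that every block $I-s_jA_j$ is nonexpansive and hence $\rho(M_\infty)\le 1$, and strictness is extracted from a unit-modulus eigenvector argument: such an eigenvector must be driven into consensus by the irreducible row-stochastic $P_\infty$ and simultaneously lie in $\bigcap_j\ker A_j$, which is trivial because $\sum_j A_j$ is positive definite (the $\mu$-strong convexity of $f=\tfrac1n\sum_k f_k$). Stability of $M_\infty$ then propagates to the time-varying system and gives convergence to an $O(\alpha)$-neighborhood.

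The main obstacle I anticipate is twofold. In both cases the stated threshold is expressed through the \emph{limit} $n\pi_k$ of $y_k(t)$, whereas the contraction at step $t$ depends on the actual value $y_k(t)$; since $y_k(t)$ need not dominate $n\pi_k$ during the transient, the effective step $s_j(t)$ may temporarily leave the contractive window. Controlling these finitely many steps by a uniform positive lower bound for $y_k(t)$, so that the transient contributes only a bounded multiplicative factor before the asymptotic contraction takes over, is what makes the sharp endpoint $\alpha=\alpha_0$ admissible, and the strict inequalities $2/(L_k+\mu_k)<2/L_k$ (Case 1) and $2/(L_k+\epsilon)<2/L_k$ (Case 2) are exactly what provide the needed slack. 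The genuinely hard part, however, is the Case 2 spectral estimate $\rho(M_\infty)<1$: unlike Case 1 it cannot be obtained agent-by-agent and requires coupling the nonexpansiveness of the blocks with the mixing and the \emph{aggregate} positive definiteness in a single eigenvector argument.
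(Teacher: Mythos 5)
Your reformulation of the $z$-iteration as $z_k(t+1)-x_*=\sum_{j}P_{kj}(t)\,\eta_j(t)$ with the row-stochastic weights $P_{kj}(t)=W_{kj}y_j(t)/y_k(t+1)$ is correct, and it is a genuinely different starting point from the paper, which works instead with the $w$-variables and the operator $T_\alpha$ of \eqref{eq-1-21} in the weighted norm $\|\cdot\|_{\pi\otimes 1_d}$. Your contraction ingredients (the co-coercivity bound in Case 1, which is Lemma \ref{lem-3-2} here; the unit-modulus eigenvector argument in Case 2, which mirrors the equality-condition analysis of Lemmas \ref{lem-3-3} and \ref{lem-3-4}) and your treatment of the transient (the excess of $s_j(t)=\alpha/y_j(t)$ over $\alpha/(n\pi_j)$ decays like $\rho^t$ and contributes only a bounded product, as in the factor $V_\alpha$ of Theorem \ref{thm-2-5}) all have direct counterparts in the paper.

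The genuine gap is in the final quantitative step, and it occurs in both cases. Your recursion has contraction factor $1-c\alpha$ and forcing term of size $O(\alpha)$: in Case 1 the forcing is $\max_j s_j(t)\|\nabla f_j(x_*)\|$, and in Case 2 one has $\|b(t)\|=O(\alpha)$ while the spectral gap $1-\rho(M_\infty)$ is itself $O(\alpha)$, since $M_\infty\to P_\infty\otimes I_d$ (spectral radius $1$) as $\alpha\to 0$. Such a recursion only yields $\limsup_t E(t)\le O(\alpha)/(c\alpha)=O(1)$: a neighborhood whose radius is bounded independently of $\alpha$, not proportional to $\alpha$. The $O(\alpha)$ radius claimed in the theorem cannot be obtained agent-by-agent, because the individual forcing terms $-s_j(t)\nabla f_j(x_*)$ do not vanish; what vanishes is their aggregate, $\sum_{j=1}^n\nabla f_j(x_*)=0$, and this cancellation is exactly what the max-norm bound in Case 1 and the crude estimate $\|(I-M_\infty)^{-1}b\|\le \|b\|/(1-\rho(M_\infty))$ in Case 2 throw away. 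The paper's route around this is the content of Theorems \ref{thm-2-5} and \ref{thm-2-6}: first prove linear convergence to the fixed point $w^\alpha$ of the limiting operator $T_\alpha$, and then bound $\|w^\alpha-n\pi\otimes x_*\|_{\pi\otimes 1_d}$ by $O(\alpha)$ through a two-component decomposition — the consensus error of the fixed point is at most $\frac{\alpha\rho}{1-\rho}\big(\frac{LR}{n\pi_{min}}+Q\big)$ by Proposition \ref{pro-5-5}, and the average-to-minimizer error is at most $\frac{L}{\gamma}$ times the consensus error by Lemma \ref{lem-5-3}, whose proof averages the fixed-point equation against the column-stochastic $W$ so that the gradient sum drops out and the factor $\alpha$ cancels on both sides rather than costing $1/\alpha$. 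To repair your argument you would need the same ingredient inside your framework: identify the fixed point of your limiting $z$-iteration and estimate its distance to $1_n\otimes x_*$ by exploiting $\sum_j\nabla f_j(x_*)=0$, splitting consensus and average directions. The endpoint slack $2/(L_k+\mu_k)<2/L_k$ that you invoke addresses only the transient, not this missing cancellation.
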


Table \ref{tab-1} summarizes the results of this paper and the previous result for the gradient-push algorithm with constant stepsize.  
\begin{table}[ht] 
\centering
\begin{tabular}{|c|c|c|c| }\cline{1-4}
Ref. & {Condition on the cost functions}&   \makecell{Learning \\ rate}  &Example 
\\ 
\hline
&&&\\[-1em]
\cite{CKY2} &\makecell{$f_j$ is $L$-smooth (not necessarily convex) \\ $f$ is strongly convex }  & $\alpha \leq \frac{c}{L^2}$ & \makecell{$f_j (x) =x^\top  A_j x + b_j x,$ 
\\  $\sum_{j=1}^{n} A_j >0$} 
\\
&&&\\[-1em]
\hline
&&&\\[-1em]
\makecell{This work \\ {\footnotesize (Theorem \ref{thm-1-2})}} &   $f_j$ is strongly convex  and $L$-smooth   &   $\alpha \leq \frac{c}{L}$ & \makecell{$f_j (x) =g_c (x) + \ep \|x\|^2$, \\ $g_c$ is convex and 
\\
$L$-smooth  $\&$ $\ep>0$}
\\
&&&\\[-1em]
\hline
&&&\\[-1em]
\makecell{This work \\{\footnotesize (Theorem \ref{thm-1-2})}} & \makecell{$f_j$ is convex quadratic and $L$-smooth\\ $f$ is strongly convex}  &  $\alpha \leq \frac{c}{L}$& \makecell{$f_j (x) =x^\top  A_j x + b_j x$, \\ $\sum_{j=1}^n A_j > 0$, $A_j \geq 0$}
\\
\hline
\end{tabular}
\vspace{0.1cm}
\caption{ \footnotesize The results of guaranteeing the linear convergence for the gradient-push algorithm to an $O(\alpha)$-neighborhood of $x_*$. Here $c>0$ is a value which is independent of $L>0$ and determined by the matrix $W$ and the constant for the strongly convexity. The notation $M>0$ (resp., $M \geq 0$) for a matrix $M$ means that $M$ is positive definite (resp., semi-definite).}\label{tab-1}
\end{table}
To obtain the result, we rewrite the gradient-push algorithm as a succinct form in terms of the variable $w_i (t)$, and then notice that the right hand side of the algorithm can be written as an operator $T_{\alpha}:\mathbb{R}^{nd}\rightarrow \mathbb{R}^{nd}$ along with a small perturbation (refer to \eqref{eq-1-3} and \eqref{eq-1-21} below). We then establish the contraction property of $T_{\alpha}$, which guarantees a unique fixed point $w^\alpha$ of $T_{\alpha}$ (see Theorem \ref{thm-2-3}). Furthermore, we show that the fixed points are uniformly bounded with respect to $\alpha \in (0,\alpha_0]$ (see \mbox{Theorem \ref{thm-2-4}}). Given these results, we prove that the gradient-push algorithm converges linearly to the fixed point $w^\alpha$ of $T_{\alpha}$ (see Theorem \ref{thm-2-5}). Lastly, we obtain an estimate for the distance between $w^\alpha$  and $n\pi \otimes x_*$ in the following form $\|w^\alpha - n\pi \otimes x_*\|_{\pi \otimes 1_d} = O(\alpha)$ (see Theorem \ref{thm-2-6}). Combining these results will directly yield the result of Theorem \ref{thm-1-2}. These steps with corresponding theorems of this paper are summarized as a diagram in Figure \ref{fig-flow}.

\begin{figure}[htbp] 
\includegraphics[height=5.2cm, width=15cm]{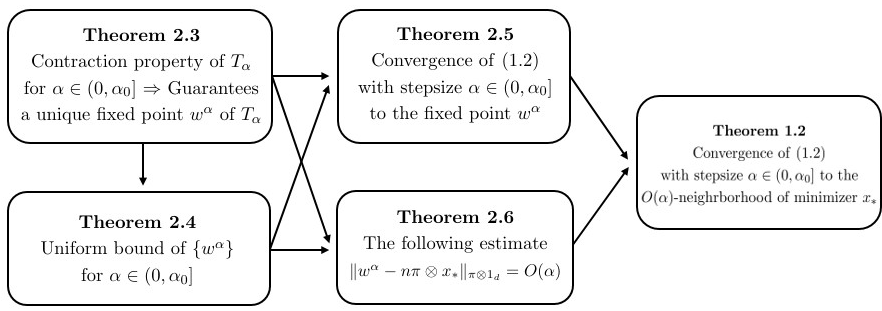} \caption{\footnotesize The flow of the main theorems for proving the convergence of the gradient-push algorithm \eqref{eq-1-2}}\label{fig-flow}
\end{figure}

\subsection{Combining the gradient-push with the Push-DIging}
Here we briefly discuss how the new understanding of the gradient-push algorithm with a large constant stepsize can be used effectively in combination with the Push-DIGing algorithm \cite{XXK, NOS}.  Determining a sharp lower bound of $\alpha_{GP}$ is a theoretically interesting problem. It is also important for practical scenarios since the choice of the stepsize $\alpha$ affects the performance of the algorithm. More precisely, choosing larger value $\alpha>0$ within the range $(0, \alpha_{GP}]$ usually makes the gradient-push algorithm converge faster to an $O(\alpha)$-neighborhood of the minimizer since the convergence rate is lower bounded by $(1-c\alpha)$ for a value $c>0$ independent of $\alpha \in (0, \alpha_{GP}]$. Over the convergence result up to an $O(\alpha)$-neighborhood, one can achieve the exact convergence algorithm to the minimizer by reducing the stepsize $\alpha$ repeatedly after suitable number of iterations. As an alternative way, we suggest to combine the gradient-push and the Push-DIGing algorithm \cite{XXK, NOS} which  converges to the minimizer exactly even for a constant stepsize by communicating also the gradient informations of the agents. The Push-DIGing algorithm is stated as follows:\
\begin{equation}\label{eq-1-20}
\begin{split}
 \mathbf{x}_i (t+1) & = \sum_{j=1}^n W_{ij} \mathbf{x}_j (t) - \alpha \mathbf{v}_i (t)
 \\
 \mathbf{y}_i (t+1)& =\sum_{j=1}^n W_{ij} \mathbf{y}_j (t)
 \\
 \mathbf{z}_i (t+1) & = \frac{\mathbf{x}_i (t+1)}{\mathbf{y}_i (t+1)}
 \\
 \mathbf{v}_i (t+1)& = \sum_{j=1}^n W_{ij} \mathbf{v}_j (t) + \nabla f_i (\mathbf{z}_i (t+1))- \nabla f_i (\mathbf{z}_i (t)),
 \end{split}
 \end{equation}
 where $\mathbf{v}_i (0) = \nabla f_i (\mathbf{z}_i (0))$. It was shown in the works \cite{XXK, NOS} that the Push-DIGing algorithm converges linearly to the minimizer when the constant  stepsize $\alpha >0$ is less than a specific value. Precisely, there are a value $M>0$  and a value $\bar{\alpha}>0$ such that for $\alpha \in (0,\bar{\alpha})$ we have
 \begin{equation*}
\sum_{k=1}^n \|z_k (t+1) -x_*\| \leq M (1-\gamma \alpha)^{t},
 \end{equation*}
where $\gamma >0$ is a value determined by the properties of the cost functions and the \mbox{matrix $W$}. Similarly to the gradient-push algorithm, we consider the following definition.
\begin{defn} We denote by $\alpha_{PD} >0$   the largest constant such that for $\alpha \in (0, \alpha_{PD}]$ the Push-DIGing algorithm converges linearly to the minimizer.
\end{defn} 
Although some lower bound of $\alpha_{PD}$ is achieved in \cite{XXK, NOS}, the exact value of $\alpha_{PD}>0$ has not been attained theoretically. We performed a numerical test to check that $\alpha_{PD} \ll \alpha_{GP}$ holds for a linear regression problem (refer to Figure \ref{fig1} for the detail). Consequently, the Push-DIGing is much slower than the gradient-push algorithm up to a certain number of iterations although the Push-DIGing converges to the minimizer exactly. Figure \ref{fig1} shows the convergence result with respect to the iteration $t\geq 0$, for both algorithms with a regularized regression problem given by
\begin{equation*}
\min_{x \in \mathbb{R}^d}\Big\{f(x):=\frac{1}{2n}\sum_{j=1}^n \Big(\| A_jx-B_j\|^2 + \delta \|x\|^2\Big)\Big\}
\end{equation*}
where $A_j \in \mathbb{R}^{m\times d}$ and $B_j \in \mathbb{R}^m$ are random matrices whose elements are selected uniformly randomly in the interval $[0,1]$ with dimensions $m=10$ and  $d=10$. We also set $\delta =0.1$ and the numbers of agents is $n=20$ with the mixing matrix $W$ generated as in Section \ref{sec-6}. We search a stepsize $\alpha_1 >0$ such that the performance of the Push-DIging algorithm \eqref{eq-1-20} is maximized. For this we test the algorithm \eqref{eq-1-20} with $\alpha =10^{-3} + 5\times10^{-6}k$ by increasing $k \in \mathbb{N}\cup \{0\}$. It is checked that the algorithm \eqref{eq-1-20} performs the best with $\alpha_1 =0.001175$ while the algorithm with $\alpha \in \{1.01 \alpha_1, 1.02 \alpha_1, 1.03\alpha_1\}$ diverge and the algorithm with $\alpha = 0.9\alpha_1$ converges slower than the case $\alpha =\alpha_1$. We also select a stepsize $\alpha_0 = 0.0297$ for the gradient-push algorithm  numerically for which the algorithm performs well. 
\begin{figure}[htbp]
\includegraphics[height=5cm, width=7.3cm]{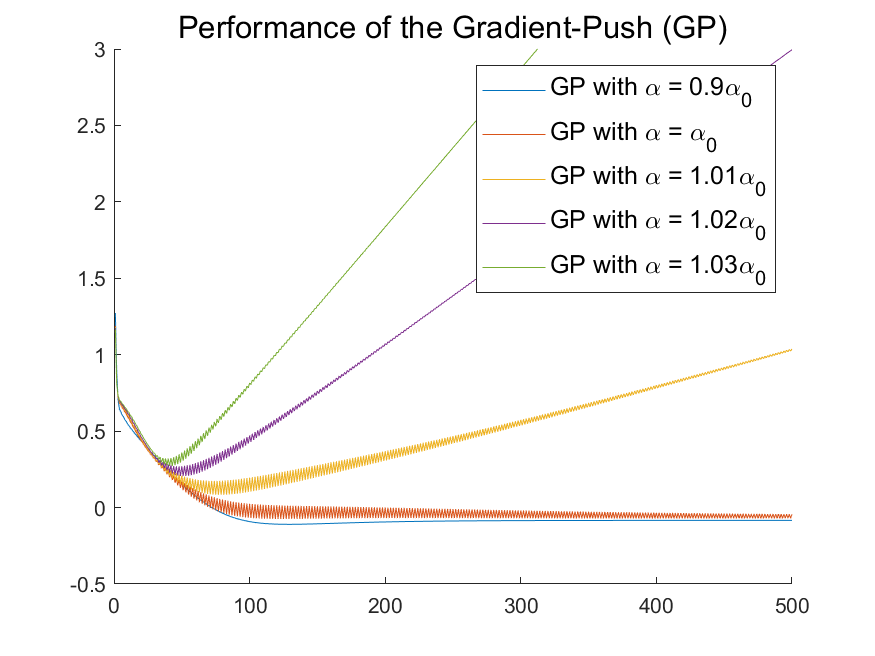}
\includegraphics[height=5cm, width=7.3cm]{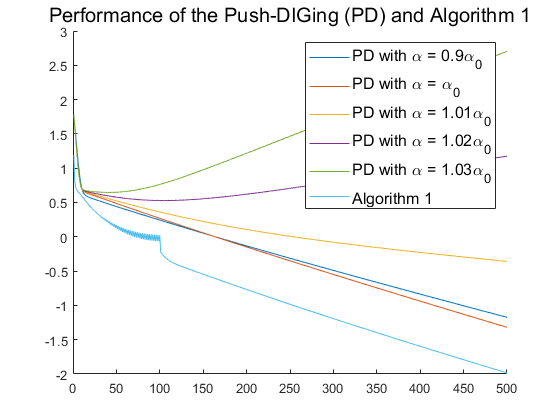}  
\vspace{-0.3cm}\caption{ Left : The graphs  of $\log_{10}\Big(\sum_{k=1}^n \|z_k (t)-x_*\|\Big)$ for the gradient-push ($\alpha_0 = 0.0297$). Right : The graphs of $\log_{10}\Big(\sum_{k=1}^n \|z_k (t)-x_*\|\Big)$ for the Push-DIGing ($\alpha_1 = 0.001175$). }\label{fig1}
\end{figure}

 To take advantages of the both algorithms,  we suggest a hybrid algorithm which performs the gradient-push algorithm for the first phase and then performs the Push-DIGing algorithm for the second phase. It is formally stated in Algorithm \ref{algo}.
 \begin{algorithm}
    \begin{algorithmic}[1] 
    \caption{Hybrid algorithm}\label{algo}
    \REQUIRE Initialize $x_i(0)$ arbitraily, $y_i(0)=1$ for all $i\in\{1,\cdots,m\}$.
            \FOR{$k =1 \ \textrm{to} \ GP_{iterate}$,}
                
                \STATE perform the gradient-push algorithm \eqref{eq-1-2} with $\alpha =\alpha_{0}$. 
                  
            \ENDFOR 
            
            \STATE  Set $\mathbf{x}_i (0) = w_i (GP_{iterate})$, $\mathbf{y}_i (0)=y_i (GP_{iterate})$, $\mathbf{z}_i (0)=z_i (GP_{iterate})$, $\mathbf{v}_i (0) =\nabla f(z_i(GP_{iterate}))$.
            
            \FOR{$k=GP_{iterate}+1 \ \textrm{to} \ Total_{iterate}$,}
                
                \STATE  perform the Push-DIGing algorithm \eqref{eq-1-20} with $\alpha =\alpha_1$.
                  
            \ENDFOR 
    \end{algorithmic}
    \end{algorithm}
Figure \ref{fig1} exhibits a graph of Algorithm \ref{algo} with the stepsizes $\alpha_0 = 0.0297$ and $\alpha_1= 0.001175$ companied with the iteration numbers $(GP_{iterate})=100$ and $(Total_{iterate})=500$. The graph of \mbox{Figure \ref{fig1}} shows the superior performance of the hybrid algorithm over the Push-DIGing algorithm, and illustates how the gradient-push algorithm with large stepsize may boost up the Push-DIging algorithm. Proving the convergence result of the hybrid algorithm will be addressed in a future work.

 This paper is organized as follows. In Section \ref{sec-2}, we state the main intermediate results related to the operator $T_{\alpha}$ for achieving the convergence result of Theorem \ref{thm-2-1}. Section \ref{sec-3} is devoted to show the contraction property of $T_{\alpha}$. Based on this result, we prove the convergence of the gradient-push algorithm towards the fixed point of $T_{\alpha}$ in Section \ref{sec-4}. Finally, we achieve a sharp estimate for the distance bewteen the fixed point of $T_{\alpha}$ and the minimizer of $f$ in Section \ref{sec-5}, which completes the convergence result  of the gradient-push algorithm. Section \ref{sec-6} provides numerical tests which support the theoretical results and the high-performance of the hybrid algorithm.

\medskip 

\noindent \textbf{Notation}
\begin{itemize}
\item We use the notation $1_d=(1,\ldots,1)^\top \in\mathbb{R}^d$.
\item We define the weighted $\pi$-norm by $\lVert w \rVert_\pi=(\sum_{j=1}^n\frac{1}{\pi_j}| w_j |^2)^{\frac{1}{2}}$ for $w=(w_1, \ldots, w_n)^\top \in\mathbb{R}^n$.
\item We use the weighted ($\pi\otimes 1_d$)-norm by $\lVert w \rVert_{\pi\otimes1_d}=(\sum_{j=1}^n\frac{1}{\pi_j}\lVert w_j \rVert^2)^{\frac{1}{2}}$ for $w=(w_1^\top ,\ldots,w_n^\top )^\top \in\mathbb{R}^{nd}$ where $\otimes$ is a Kronecker product.
\item We denote the induced matrix norms as $|||\cdot|||, |||\cdot|||_\pi, |||\cdot|||_{\pi\otimes1_d}$.
\item Denote by $\lambda(A )$ the set of all eigenvalues of $A$ for a symmetric matrix $A$.
\item We define $\nabla F: \mathbb{R}^{nd}\rightarrow \mathbb{R}^{nd}$ by $\nabla F(w) = (\nabla f_1 (w_1)^\top , \cdots, \nabla f_n (w_n)^\top )^\top $ for $w = (w_1^\top , \cdots, w_n^\top )^\top  \in \mathbb{R}^{nd}$ and use the notation
\begin{equation*}
\frac{x}{n\pi} := \Big( \frac{x_1^\top}{n\pi_1}, \cdots, \frac{x_n^\top}{n\pi_n}\Big)^\top
\end{equation*}
 for   $x=(x_1^\top ,\cdots,x_n^\top )^\top  \in \mathbb{R}^{nd}$. .
\item We use the notations $L = \max_{1 \leq k \leq n} L_k$ and $\bar{L} = \frac{1}{n}\sum_{k=1}^{n} L_k$. 
\end{itemize}   
\section{Convergence results}\label{sec-2}

In this section, we state preliminary  main results for showing the convergence result of Theorem \ref{thm-1-2}.
To recall the result of  \cite{CKY2}, we define the following constant
\begin{equation*}
\delta = \max_{t \in \mathbb{N}_0} \max_{1\leq i \leq n} \frac{1}{y_i (t)}
\end{equation*}
and  consider the following value 
\begin{equation}\rho := ||| W-W^{\infty} |||_\pi. 
\end{equation}
It was proved in \cite{XSKK} that we have $\rho \in (0,1)$ when Assumption 1 holds true. 
\begin{thm}[\cite{CKY2}]\label{thm-2-1} Suppose that each cost is $L$-smooth and the aggregate cost $f$ is $\beta$-strongly convex for some $\beta >0$. Let $\gamma = \frac{\beta L}{\beta + L}$, $q = \frac{n\gamma}{4L \delta}$  and assume that $\alpha>0$ satisfies  $\alpha \leq \frac{2}{L+\beta}$ and $\alpha < Q$ with $Q>0$ defined as
\begin{equation}\label{eq-4-20}
Q = \frac{q(1-\rho)}{L \rho (\delta q + \delta {\|1_n-n\pi\|_{\pi\otimes1_d}} + \sqrt{\sum_{j=1}^n \frac{1}{\pi_j}})}.
\end{equation}
 Then for each $1 \leq i \leq n$, the sequence $\{z_i (t)\}_{t \geq 0}$ converges to an $O(\alpha)$-neighborhood of the optimal point $x_*$ exponentially fast.
\end{thm}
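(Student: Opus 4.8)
The plan is to collapse the coupled $(w,y,z)$ dynamics into a two-dimensional comparison system for an \emph{optimization error} and a \emph{consensus error}, and then to read off linear convergence from the requirement that this comparison system be a contraction, which turns out to hold exactly in the stated stepsize range. First I would stack the iterates into $w(t)=(w_1(t)^\top,\dots,w_n(t)^\top)^\top\in\mathbb{R}^{nd}$ and rewrite \eqref{eq-1-2} as $w(t+1)=Ww(t)-\alpha W\nabla F(z(t))$, letting $W$ act as $W\otimes I_d$ and $z_i=w_i/y_i$. Column stochasticity gives $\sum_i w_i(t+1)=\sum_j x_j(t)$, so the average $\bar x(t):=\frac1n\sum_j x_j(t)$ obeys the plain recursion $\bar x(t+1)=\bar x(t)-\frac{\alpha}{n}\sum_i\nabla f_i(z_i(t+1))$. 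Writing $\nabla f=\frac1n\sum_i\nabla f_i$, this is a gradient-descent step on $f$ of size $\alpha$ up to the discrepancy between $z_i(t+1)$ and $\bar x(t)$; moreover $W^\infty w(t+1)=n\pi\otimes\bar x(t)$, which identifies $\bar x(t)$ as the consensus value toward which the $z_i(t+1)$ collapse.

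For the optimization error $a(t):=\|\bar x(t)-x_*\|$ I would add and subtract $\alpha\nabla f(\bar x(t))$, which vanishes at $x_*$. Since $f$ is $\beta$-strongly convex and $L$-smooth, the standard co-coercivity estimate gives the contraction $\|\bar x(t)-\alpha\nabla f(\bar x(t))-x_*\|\le(1-\gamma\alpha)a(t)$ on the range $\alpha\le\frac{2}{L+\beta}$ with $\gamma=\frac{\beta L}{\beta+L}$, while the leftover $\frac{\alpha}{n}\sum_i\bigl(\nabla f_i(z_i(t+1))-\nabla f_i(\bar x(t))\bigr)$ is controlled by $L$-smoothness in terms of the consensus spread of the $z_i$. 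This yields $a(t+1)\le(1-\gamma\alpha)a(t)+c_1\alpha L\,b(t+1)$, where $c_1$ absorbs the weight-conversion factor $\sqrt{\sum_j1/\pi_j}$.

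For the consensus error $b(t):=\|w(t)-W^\infty w(t)\|_{\pi\otimes1_d}$ I would use $(W-W^\infty)W^\infty=0$ to write $w(t+1)-W^\infty w(t+1)=(W-W^\infty)\bigl(w(t)-W^\infty w(t)\bigr)-\alpha(W-W^\infty)\nabla F(z(t))$, so that $b(t+1)\le\rho\,b(t)+\alpha\rho\|\nabla F(z(t))\|_{\pi\otimes1_d}$. The essential point, which removes the a priori bounded-gradient hypothesis used in \cite{NO1,NO2,TT}, is to bound the gradient term through $\nabla f_i(z_i)=\bigl(\nabla f_i(z_i)-\nabla f_i(x_*)\bigr)+\nabla f_i(x_*)$: $L$-smoothness gives $\|\nabla F(z(t))\|_{\pi\otimes1_d}\le L\|z(t)-1_n\otimes x_*\|_{\pi\otimes1_d}+\|\nabla F(1_n\otimes x_*)\|_{\pi\otimes1_d}$, and using $z_i-\bar x=(w-W^\infty w)_i/y_i$ (up to a geometrically decaying transient coming from $y_i(t)-n\pi_i$) the first term splits into a consensus part of size $\delta\,b(t)$ and an optimization part proportional to $a(t)$ carrying the factor $\|1_n-n\pi\|_{\pi\otimes1_d}$, whereas the last term is the genuinely constant forcing generated by the heterogeneity $\nabla f_i(x_*)\neq0$. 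Collecting these bounds produces $b(t+1)\le(\rho+c_2\alpha L\delta)\,b(t)+c_3\alpha L\,a(t)+C_0\alpha$.

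Assembling the two inequalities gives a $2\times2$ linear comparison system in $(a(t),b(t))$ with an $O(\alpha)$ forcing, and I expect the spectral-radius bookkeeping to be the crux. Both off-diagonal entries carry a factor $\alpha L$, so their product scales like $\alpha^2L^2$; demanding that the comparison matrix have spectral radius below one then forces $\gamma(1-\rho)$ to dominate $\alpha L^2$ times the geometric constants $\delta$, $\rho$, $\|1_n-n\pi\|_{\pi\otimes1_d}$, $\sqrt{\sum_j1/\pi_j}$, which is precisely the origin of the threshold $\alpha<Q=O(1/L^2)$ in \eqref{eq-4-20} together with the auxiliary $q=\frac{n\gamma}{4L\delta}$. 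Tracking these constants sharply through the coupling is the delicate part. Once the comparison matrix is a contraction, iterating it produces geometric decay of $(a(t),b(t))$ to a fixed point of size $O(\alpha)$; since $z_i(t)-x_*=(z_i(t)-\bar x(t-1))+(\bar x(t-1)-x_*)$ is bounded by $\delta\,b(t)+a(t-1)$, the $O(\alpha)$ consensus component yields exactly the claimed exponentially fast convergence of each $z_i(t)$ to an $O(\alpha)$-neighborhood of $x_*$.
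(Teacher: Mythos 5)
Your proposal cannot be checked against an internal proof, because this paper never proves Theorem \ref{thm-2-1}: the statement is imported from \cite{CKY2} as background, and is used only as the benchmark that Theorem \ref{thm-1-2} improves upon. Judged against the structure of the quoted result, your reconstruction is credible and is almost certainly the route of the cited work: a coupled recursion for the optimization error of the column-stochasticity-preserved average and for the weighted consensus error, with the a priori bounded-gradient hypothesis of \cite{NO1,NO2,TT} removed by recentering gradients at $x_*$. Indeed, every constant in \eqref{eq-4-20} is accounted for by your bookkeeping: $\delta$ from $1/y_i(t)\le\delta$, the factor $\|1_n-n\pi\|_{\pi\otimes1_d}$ from the push-sum transient $\|y(t)-n\pi\|_{\pi}\le\rho^t\|1_n-n\pi\|_{\pi}$, the factor $\sqrt{\sum_{j=1}^n 1/\pi_j}$ from converting the heterogeneity forcing $\nabla f_j(x_*)\neq 0$ into the $\pi$-norm, and the $O(1/L^2)$ scaling of $Q$ from the product of the two $O(\alpha L)$ off-diagonal couplings --- exactly the bilinear obstruction you identify. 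The useful contrast is with what this paper actually does for its own results: rather than a two-error small-gain system, Theorems \ref{thm-2-3}--\ref{thm-2-6} recast the iteration as a perturbed fixed-point iteration $w(t+1)=T_\alpha(w(t))+\alpha P_t(w(t))$, prove that $T_\alpha$ is a $(1-C\alpha)$-contraction in $\|\cdot\|_{\pi\otimes1_d}$ (using convexity of each local cost), show the fixed points $w^\alpha$ are uniformly bounded, and separately establish $\|w^\alpha-n\pi\otimes x_*\|_{\pi\otimes1_d}=O(\alpha)$; this eliminates the $\alpha^2L^2$ cross term that caps your threshold, which is precisely how the admissible stepsize improves from $\alpha=O(1/L^2)$ to $\alpha=O(1/L)$. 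Two caveats if you were to complete your sketch: first, the ``spectral-radius bookkeeping'' you defer is where $q=\frac{n\gamma}{4L\delta}$ and the exact $Q$ must emerge, and your comparison system is actually time-varying (the $y$-transient and part of the forcing enter with coefficients decaying like $\rho^t$), so one needs an induction or time-uniform domination rather than a literal eigenvalue condition; second, since $f=\sum_j f_j$ while the average iterate steps along $\frac{1}{n}\sum_j\nabla f_j$, the normalization must be fixed before co-coercivity yields the stated $\gamma=\frac{\beta L}{\beta+L}$ under the condition $\alpha\le\frac{2}{L+\beta}$.
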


\begin{rem}Note that $\gamma < \beta$ and $q < \frac{n\beta}{4L\delta}$. Using this we find
\begin{equation}\label{eq-2-10}
Q < \frac{q(1-\rho)}{L \rho (  \sqrt{\sum_{j=1}^n \frac{1}{\pi_j}})} < \frac{n\beta (1-\rho)}{4L^2 \rho \delta  \sqrt{\sum_{j=1}^n \frac{1}{\pi_j}}}.
\end{equation}
From this we find that $Q = O(1/L^2)$ and so the range $(0,Q)$ for the stepsize of the convergence result of Theorem \ref{thm-2-1} is quite restrictive when the smoothness constant $L>0$ is large.
\end{rem} 
The result of Theorem \ref{thm-1-2} improves the range of the stepsize given in Theorem \ref{thm-2-1} from $\alpha = O(1/L^2)$ to $\alpha = O(1/L)$ when each local cost is convex. To obtain this result, we begin by writing the gradient-push algorithm as follows:
\begin{equation}\label{eq-1-3}
w_i(t+1) = \sum_{j=1}^n W_{ij} \Big(w_j(t) - \alpha \nabla f_j\Big(\frac{w_j(t)}{y_j(t)}\Big)\Big),
\end{equation}
and recall from \cite{CKY} that there exist $a >0$  such that 
\begin{equation}\label{eq-1-6}
\Big\Vert \frac{1}{y_j(t)} - \frac{1}{n\pi_j} \Big\Vert \leq a \rho^t
\end{equation}
for all $1 \leq j \leq n$. Now we introduce the mapping  $T_{\alpha}$ : $\mathbb{R}^{nd}$ $\rightarrow$ $\mathbb{R}^{nd}$ defined as
\begin{equation}\label{eq-1-21}
T_{\alpha}(w) =\Big(
\sum_{j=1}^n W_{1j}\Big(w_j - \alpha\nabla f_j\Big(\frac{w_j}{n\pi_j}\Big)\Big)^\top , \cdots, 
\sum_{j=1}^n W_{nj}\Big(w_j - \alpha\nabla f_j\Big(\frac{w_j}{n\pi_j}\Big)\Big)^\top \Big)^\top 
\end{equation}
and write the algorithm \eqref{eq-1-3} in the following way
\begin{equation}\label{eq-1-4}
w(t+1) = T_{\alpha}\big(w(t)\big) + \alpha P_t(w(t)),
\end{equation}
where $P_t$ : $\mathbb{R}^{nd}$ $\rightarrow$ $\mathbb{R}^{nd}$ is defined as follows
\begin{equation}\label{eq-1-5}
P_t(w)=\Big(
\sum_{j=1}^n W_{1j}\Big(\nabla f_j \Big(\frac{w_j}{n\pi_j}\Big) - \nabla f_j \Big(\frac{w_j}{y_j(t)}\Big)\Big)^\top , \cdots, \sum_{j=1}^n W_{nj}\Big(\nabla f_j\Big(\frac{w_j}{n\pi_j}\Big) - \nabla f_j\Big(\frac{w_j}{y_j(t)}\Big)\Big)^\top \Big)^\top 
\end{equation}
for $w=(w_1, \cdots, w_n) \in \mathbb{R}^{nd}$.
Using the   convergence property \eqref{eq-1-6} of $y_j (t)$ towards $n\pi_j$, we show that the operator $P_t (w)$ converges to zero exponentially fast when $w$ is contained in a compact set (See Lemma \ref{lem-4-1}). Therefore, the convergence property of the algorithm \eqref{eq-1-4} is closely related to the property of the mapping $T_{\alpha}$. This motivates us to study the contraction property of the mapping $T_{\alpha}$ in the following theorem.
\begin{thm}\label{thm-2-3}\mbox{~}
\begin{enumerate} 
\item[Case 1:] Each cost $f_k$ is $\mu_k$-strongly convex and $L_k$-smooth for $1 \leq k \leq n$. The stepsize is assumed to satisfy $\alpha \in (0, \alpha_0]$ with  $\alpha_0 =\min_{1 \leq k \leq n} \frac{2n\pi_k}{L_k +\mu_k}$.  
\item[Case 2:] Each cost $f_k$ is given by convex quadratic form as $f_k(x) = \frac{1}{2}x^\top A_kx +B_kx$ for  a symmetric positive semi-definite matrix $A_k$ and $B_k \in \mathbb{R}^{1\times d}$ and the aggregate cost $f$= $\frac{1}{n}\sum_{k=1}^n f_k$ is $\mu$-strongly convex. 
The stepsize is assumed to satisfy $\alpha \in (0,\alpha_0]$ with  $\alpha_0 = \min_{1 \leq k \leq n}\frac{2n\pi_k}{L_k+\epsilon}$ for any fixed $\ep>0$. Here the smoothness constant $L_k$ of cost $f_k$ is given by $\max \{\lambda(A_k)\}$.
\end{enumerate} 
For the above two cases, the mapping $T_{\alpha}$ is contractive with respect to $\Vert . \Vert_{\pi\otimes1_d}$. More precisely, for $\alpha \in (0,\alpha_0]$ we have
\begin{equation}\label{eq-2-11}
 \lVert T_{\alpha}(x)- T_{\alpha}(y) \rVert_{\pi\otimes1_d} \leq (1-C\alpha )\lVert x-y \rVert_{\pi\otimes1_d}\quad \forall~ x,y \in \mathbb{R}^{nd},
\end{equation}
where $C>0$ is defined as
\begin{equation}\label{eq-2-12}
C= \Bigg\{ \begin{array}{ll} \min_{1\leq k \leq n}\frac{\mu_k L_k}{n(\mu_k+L_k)\pi_{k}}&\quad \textrm{for Case 1}
\medskip
\\
\frac{(1-\eta_{\ep})}{\alpha_0}& \quad \textrm{for Case 2},
\end{array}
\end{equation}
and $\eta_{\ep} >0$ for Case 2 denotes the contraction  constant of $T_{\alpha_0}$ which will be verified to be $\eta_{\ep} \in (0,1)$.
\end{thm}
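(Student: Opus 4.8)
The plan is to decompose $T_{\alpha}$ into a componentwise gradient step followed by mixing, and to estimate the two factors separately. Writing $G_j^{\alpha}(w_j) = w_j - \alpha \nabla f_j(w_j/(n\pi_j))$, we have $T_{\alpha}(w)_i = \sum_{j=1}^n W_{ij} G_j^{\alpha}(w_j)$, i.e. $T_{\alpha} = (W\otimes I_d)\circ G^{\alpha}$. The first step is a purely linear-algebraic estimate showing that mixing by $W$ is nonexpansive in $\|\cdot\|_{\pi\otimes1_d}$. Since $\sum_j W_{ij}\pi_j = \pi_i$, the coefficients $\hat W_{ij} := W_{ij}\pi_j/\pi_i$ are row stochastic; writing $g_j := G_j^{\alpha}(x_j) - G_j^{\alpha}(y_j)$ and using $\sum_j W_{ij} g_j = \pi_i \sum_j \hat W_{ij}(g_j/\pi_j)$, Jensen's inequality followed by the column stochasticity $\sum_i W_{ij}=1$ yields
\[
\|T_{\alpha}(x)-T_{\alpha}(y)\|_{\pi\otimes1_d}^2 \le \sum_{j=1}^n \frac{1}{\pi_j}\,\|g_j\|^2 .
\]
It then remains to control each $\|g_j\|$.

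For Case 1 I would use the standard co-coercivity inequality for a $\mu_j$-strongly convex, $L_j$-smooth function. Setting $u = x_j/(n\pi_j)$ and $v = y_j/(n\pi_j)$, one has $g_j = n\pi_j\big[(u-v) - \tfrac{\alpha}{n\pi_j}(\nabla f_j(u) - \nabla f_j(v))\big]$, i.e. $n\pi_j$ times a gradient-descent step for $f_j$ with effective stepsize $\eta_j = \alpha/(n\pi_j)$. Expanding $\|(u-v) - \eta_j(\nabla f_j(u)-\nabla f_j(v))\|^2$ and inserting $\langle \nabla f_j(u)-\nabla f_j(v), u-v\rangle \ge \frac{\mu_j L_j}{\mu_j+L_j}\|u-v\|^2 + \frac{1}{\mu_j+L_j}\|\nabla f_j(u)-\nabla f_j(v)\|^2$, the cross term is absorbed exactly when $\eta_j \le 2/(\mu_j+L_j)$, that is $\alpha \le 2n\pi_j/(\mu_j+L_j)$, which is guaranteed by $\alpha \le \alpha_0$. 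This gives $\|g_j\| \le \big(1 - \tfrac{\alpha\mu_j L_j}{n\pi_j(\mu_j+L_j)}\big)\|x_j-y_j\|$, and since $\tfrac{\mu_j L_j}{n\pi_j(\mu_j+L_j)} \ge C$ while all factors stay in $(0,1)$ on the admissible range, substituting into the displayed bound and factoring out $(1-C\alpha)^2$ completes Case 1.

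Case 2 is the delicate part, since now $g_j = M_j(x_j-y_j)$ with $M_j = I_d - \frac{\alpha}{n\pi_j}A_j$ only \emph{nonexpansive}: when $A_j$ is merely positive semidefinite, $M_j$ fixes $\ker A_j$, so no single component contracts and the contraction must come from the aggregate strong convexity together with connectivity. I would first prove that at $\alpha=\alpha_0$ the affine map $T_{\alpha_0}$ is strictly contractive by tracing the equality cases of the estimate above. Suppose $\|T_{\alpha_0}(x)-T_{\alpha_0}(y)\|_{\pi\otimes1_d} = \|x-y\|_{\pi\otimes1_d}$ for some $x\ne y$. Equality in Jensen forces $g_j/\pi_j$ to be constant over the in-neighbours of each node, so by strong connectivity (Assumption \ref{ass-1}) all $g_j/\pi_j$ equal a common vector $v$; equality in $\|M_j(x_j-y_j)\|=\|x_j-y_j\|$ forces $x_j-y_j\in\ker A_j$, where $\alpha_0 = \min_k 2n\pi_k/(L_k+\ep)$ is used precisely to keep $\tfrac{\alpha_0}{n\pi_j}L_j<2$ and thereby exclude the eigenvalue $-1$ of $M_j$. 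Then $x_j-y_j=\pi_j v\in\ker A_j$ for all $j$, so $v\in\bigcap_j\ker A_j=\ker\big(\sum_j A_j\big)=\{0\}$ because $\sum_j A_j\succeq n\mu I_d$ by strong convexity of $f$; hence $x=y$, a contradiction. As $T_{\alpha_0}$ is affine, its contraction constant $\eta_{\ep}=|||(W\otimes I_d)\,\mathrm{diag}(M_j)|||_{\pi\otimes1_d}$ is attained on the compact unit sphere, upgrading strictness to $\eta_{\ep}\in(0,1)$.

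Finally, for general $\alpha\in(0,\alpha_0]$ I would interpolate. With $t=\alpha/\alpha_0$, linearity gives $M_j(\alpha)=(1-t)I_d+t\,M_j(\alpha_0)$, hence the linear part of $T_{\alpha}$ satisfies $(W\otimes I_d)\,\mathrm{diag}(M_j(\alpha)) = (1-t)(W\otimes I_d)+t\,(W\otimes I_d)\,\mathrm{diag}(M_j(\alpha_0))$. Using $|||W\otimes I_d|||_{\pi\otimes1_d}\le1$ from the first step and $|||(W\otimes I_d)\,\mathrm{diag}(M_j(\alpha_0))|||_{\pi\otimes1_d}=\eta_{\ep}$, the triangle inequality yields contraction constant $(1-t)+t\eta_{\ep}=1-\tfrac{\alpha}{\alpha_0}(1-\eta_{\ep})$, which is \eqref{eq-2-11} with $C=(1-\eta_{\ep})/\alpha_0$. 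The main obstacle is the strict-contraction claim for $T_{\alpha_0}$ in Case 2: the rest is a routine combination of the averaging estimate with a componentwise bound, whereas there one must turn non-strict componentwise inequalities into a genuine contraction, which is exactly where strong connectivity and $\ker(\sum_j A_j)=\{0\}$ enter.
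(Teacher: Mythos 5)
Your proposal is correct and follows essentially the same route as the paper: nonexpansiveness of the mixing step in the $\pi\otimes 1_d$-norm (your Jensen argument with the row-stochastic reweighting $\hat W_{ij}=W_{ij}\pi_j/\pi_i$ is equivalent to the paper's Cauchy--Schwarz proof of Lemma \ref{lem-3-3}), componentwise co-coercivity with the rescaled stepsize $\alpha/(n\pi_k)$ for Case 1, and for Case 2 an equality-case analysis ($\ker A_j$ via the analogue of Lemma \ref{lem-3-4}, proportionality to $\pi$ via connectivity, then $\ker\sum_j A_j=\{0\}$) combined with compactness to get $\eta_\ep\in(0,1)$ and the convex interpolation $T_\alpha=\frac{\alpha}{\alpha_0}T_{\alpha_0}+(1-\frac{\alpha}{\alpha_0})J$. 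The only differences are cosmetic, so nothing needs to be changed.
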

 By the contraction property of the above result, the mapping $T_{\alpha}$ for $\alpha \in (0,\alpha_0]$ has a unique fixed point, which we denote by  $w^\alpha \in \mathbb{R}^{nd}$. 
 We show that the fixed points $w^\alpha$ are uniformly bounded for $\alpha \in (0, \alpha_0]$ in the following theorem.

\begin{thm}\label{thm-2-4}
For the two cases  of Theorem \ref{thm-2-3}, there exists a unique fixed point $w^\alpha$ of $T_\alpha$ for each $\alpha \in (0,\alpha_0]$. In addition, they are uniformly bounded, i.e.
\begin{equation*}
\sup_{\alpha \in (0,\alpha_0]} \Vert w^\alpha\Vert_{\pi\otimes1_d} \leq R
\end{equation*}
where $R=\frac{1}{C}\|J(\nabla F(0))\|_{\pi\otimes1_d}$ and $C$ is defined in Theorem \ref{thm-2-3}.
\end{thm}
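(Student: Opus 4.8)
The plan is to read off both assertions directly from the contraction estimate \eqref{eq-2-11} of Theorem \ref{thm-2-3}. Existence and uniqueness of the fixed point is immediate from the Banach fixed point theorem: the space $\mathbb{R}^{nd}$ equipped with $\|\cdot\|_{\pi\otimes1_d}$ is complete, and for every $\alpha \in (0,\alpha_0]$ the map $T_\alpha$ satisfies $\|T_\alpha(x)-T_\alpha(y)\|_{\pi\otimes1_d} \le (1-C\alpha)\|x-y\|_{\pi\otimes1_d}$ with contraction factor $1-C\alpha \in (0,1)$, where $C>0$ is the $\alpha$-independent constant of \eqref{eq-2-12} (this holds in both Case 1 and Case 2). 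Hence $T_\alpha$ admits a unique $w^\alpha$ with $T_\alpha(w^\alpha)=w^\alpha$.

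For the uniform bound, the key observation is that $T_\alpha(0)$ is exactly linear in $\alpha$. First I would evaluate \eqref{eq-1-21} at the origin: setting $w=0$ forces $w_j/(n\pi_j)=0$, so each inner bracket reduces to $-\alpha\nabla f_j(0)$, and after applying the mixing weights $W_{kj}$ one obtains
\begin{equation*}
T_\alpha(0) = -\alpha\, J(\nabla F(0)),
\end{equation*}
where $J(\nabla F(0))$ denotes the vector whose $k$-th block is $\sum_{j=1}^n W_{kj}\nabla f_j(0)$. In particular $\|T_\alpha(0)\|_{\pi\otimes1_d} = \alpha\,\|J(\nabla F(0))\|_{\pi\otimes1_d}$, a quantity whose $\alpha$-dependence is a single explicit prefactor.

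Next I would combine the fixed point identity with the contraction estimate. Using $w^\alpha = T_\alpha(w^\alpha)$ together with the triangle inequality and \eqref{eq-2-11} applied to $x=w^\alpha$, $y=0$,
\begin{align*}
\|w^\alpha\|_{\pi\otimes1_d}
&= \|T_\alpha(w^\alpha)\|_{\pi\otimes1_d}
\le \|T_\alpha(w^\alpha)-T_\alpha(0)\|_{\pi\otimes1_d} + \|T_\alpha(0)\|_{\pi\otimes1_d} \\
&\le (1-C\alpha)\|w^\alpha\|_{\pi\otimes1_d} + \alpha\,\|J(\nabla F(0))\|_{\pi\otimes1_d}.
\end{align*}
Transposing the first term on the right and dividing by $C\alpha>0$ yields $\|w^\alpha\|_{\pi\otimes1_d} \le \frac{1}{C}\|J(\nabla F(0))\|_{\pi\otimes1_d} = R$. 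Since this bound no longer contains $\alpha$, taking the supremum over $\alpha \in (0,\alpha_0]$ closes the argument.

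There is no serious analytic obstacle here; the statement is the standard a priori bound for the fixed point of a near-identity contraction. The only point requiring care is that the contraction factor has precisely the form $(1-C\alpha)$ with $C$ independent of $\alpha$, so that the $\alpha$ produced by the contraction defect cancels against the $\alpha$ sitting in front of $J(\nabla F(0))$; this uniformity is exactly what Theorem \ref{thm-2-3} supplies through \eqref{eq-2-12} (note that in Case 2 one has $C\alpha_0 = 1-\eta_\ep<1$, consistent with $\eta_\ep$ being the contraction constant of $T_{\alpha_0}$). The remaining bookkeeping is merely to confirm that the single constant $C$ is admissible simultaneously for all $\alpha \in (0,\alpha_0]$ rather than an $\alpha$-varying one, which again follows from the uniform statement of \eqref{eq-2-11}.
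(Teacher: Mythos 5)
Your proof is correct, and it rests on exactly the two ingredients the paper's own proof uses: the uniform contraction estimate \eqref{eq-2-11} and the evaluation of $T_\alpha$ at the origin, $T_\alpha(0)=-\alpha J(\nabla F(0))$ (you even get the sign right; the paper writes $u(1)=\alpha J(\nabla F(0))$, a harmless slip since only the norm enters). The difference lies in how the bound is extracted. The paper runs the Picard iteration $u(0)=0$, $u(t+1)=T_\alpha(u(t))$, bounds the increments by $(1-C\alpha)^t\Vert u(1)-u(0)\Vert_{\pi\otimes 1_d}$, shows $u(t)\to w^\alpha$, and sums the telescoping geometric series to obtain $\Vert w^\alpha\Vert_{\pi\otimes 1_d}\le \frac{1}{C\alpha}\Vert u(1)\Vert_{\pi\otimes 1_d}$. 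You instead apply the triangle inequality directly to the fixed-point identity, $\Vert w^\alpha\Vert_{\pi\otimes 1_d}\le (1-C\alpha)\Vert w^\alpha\Vert_{\pi\otimes 1_d}+\alpha\Vert J(\nabla F(0))\Vert_{\pi\otimes 1_d}$, and rearrange. Your route is shorter and avoids re-deriving convergence of the iterates inside this proof; the paper's route has the minor side benefit of exhibiting a concrete sequence converging to $w^\alpha$, which is also how the fixed point is computed numerically in Section \ref{sec-6}. Both arguments produce the identical constant $R=\frac{1}{C}\Vert J(\nabla F(0))\Vert_{\pi\otimes 1_d}$, and both hinge on the same uniformity that you correctly flag as the crux: the single constant $C$ of \eqref{eq-2-12} is admissible simultaneously for all $\alpha\in(0,\alpha_0]$, so the factor $\alpha$ coming from the contraction defect cancels against the factor $\alpha$ in $\Vert T_\alpha(0)\Vert_{\pi\otimes 1_d}$.
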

The proofs of Theorem \ref{thm-2-3} and Theorem \ref{thm-2-4} will be given in Section \ref{sec-3}. 
The results obtained in the above two results on the mapping $T_{\alpha}$ allow us to study the convergence property of the gradient-push algorithm \eqref{eq-1-3}. 
 Namely, we establish a  sharp estimate of  $\Vert w(t) - w^\alpha \Vert_{\pi\otimes1_d}$.
\begin{thm}\label{thm-2-5}\mbox{~} Consider the two cases and the value $\alpha_0 >0$ of Theorem \ref{thm-2-3}. Fix an initial point $w(0) \in \mathbb{R}^{nd}$ and  denote by $w_{\alpha}(t)$ the sequence $w(t) \in \mathbb{R}^{nd}$ of the algorithm \eqref{eq-1-3} with stepsize $\alpha >0$ and initial point $w(0)$.  Then, for $\alpha \in (0,\alpha_0]$ the sequence $w_\alpha(t)$ converges to the unique fixed point $w^\alpha$ of $T_{\alpha}$ with linear convergence rate satisfying the following estimate
\begin{equation*} 
\Vert w_{\alpha}(t+1)-w^\alpha\Vert_{\pi\otimes1_d} \leq V_\alpha (1-C\alpha)^{t+1}\Vert w(0)-w^\alpha\Vert_{\pi\otimes 1_d} + \mathcal{R}(t),
\end{equation*} 
where
\begin{equation*}
\mathcal{R}(t) =\bigg\{ \begin{array}{ll}tV_\alpha\alpha bR\rho^t +\alpha bR\rho^t &\textrm{if}~  1-C\alpha=\rho
\\
 \frac{\alpha bRV_\alpha(1-C\alpha)}{1-C\alpha-\rho}\Big[(1-C\alpha)^t-\rho^t\Big] +\alpha bR\rho^t &\textrm{if}~ 1-C\alpha \neq \rho.
 \end{array}
\end{equation*}
Here $V_{\alpha}>0$ is defined as
\begin{equation}\label{eq-1-9}
V_\alpha = \prod_{j=0}^{\infty}\big(1+ \frac{\alpha b\rho^j}{1-C\alpha}\big),
\end{equation}
where we have set $b=aL$ with the constant $a>0$  defined in \eqref{eq-1-6}. We will show $V_\alpha < \infty$ in Section \ref{sec-4}.

\end{thm}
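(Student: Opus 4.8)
The plan is to track the scalar error $e(t):=\lVert w_\alpha(t)-w^\alpha\rVert_{\pi\otimes1_d}$ and reduce the statement to unrolling a single time-varying linear recursion. Since $w^\alpha=T_\alpha(w^\alpha)$, subtracting this identity from the decomposition \eqref{eq-1-4} and applying the contraction estimate \eqref{eq-2-11} of Theorem \ref{thm-2-3} gives
\[
e(t+1)\le (1-C\alpha)\,e(t)+\alpha\,\lVert P_t(w_\alpha(t))\rVert_{\pi\otimes1_d}.
\]
Next I would bound the perturbation. Each block of $P_t(w)$ in \eqref{eq-1-5} is a $W_{kj}$-weighted combination of differences $\nabla f_j(w_j/(n\pi_j))-\nabla f_j(w_j/y_j(t))$, so applying $L_j$-smoothness together with the decay estimate \eqref{eq-1-6} produces a bound of the form $\lVert P_t(w)\rVert_{\pi\otimes1_d}\le b\rho^t\lVert w\rVert_{\pi\otimes1_d}$ with $b=aL$; this is exactly the content of Lemma \ref{lem-4-1}. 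Combining this with the triangle inequality and the uniform bound $\lVert w^\alpha\rVert_{\pi\otimes1_d}\le R$ from Theorem \ref{thm-2-4} gives $\lVert w_\alpha(t)\rVert_{\pi\otimes1_d}\le e(t)+R$, and hence the closed recursion
\[
e(t+1)\le (1-C\alpha)\Big(1+\tfrac{\alpha b\rho^t}{1-C\alpha}\Big)e(t)+\alpha bR\rho^t.
\]

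The second step is to unroll this recursion. Writing $\gamma_j:=1+\tfrac{\alpha b\rho^j}{1-C\alpha}$, iteration yields
\[
e(t+1)\le (1-C\alpha)^{t+1}\Big(\prod_{j=0}^{t}\gamma_j\Big)e(0)+\sum_{s=0}^{t}(1-C\alpha)^{t-s}\Big(\prod_{j=s+1}^{t}\gamma_j\Big)\alpha bR\rho^s.
\]
Because every $\gamma_j>1$ and $\sum_{j\ge0}\rho^j=1/(1-\rho)<\infty$ (recall $\rho\in(0,1)$), the partial products are all dominated by the convergent infinite product $V_\alpha$ of \eqref{eq-1-9}; in particular $V_\alpha<\infty$, which settles the assertion deferred to Section \ref{sec-4}. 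Bounding the leading product by $V_\alpha$ gives the first term $V_\alpha(1-C\alpha)^{t+1}\lVert w(0)-w^\alpha\rVert_{\pi\otimes1_d}$. In the sum I would separate the endpoint $s=t$, whose product $\prod_{j=t+1}^{t}\gamma_j$ is empty (equal to $1$) and therefore contributes exactly $\alpha bR\rho^t$ with no $V_\alpha$ factor, and bound the remaining terms $s\le t-1$ by $V_\alpha\alpha bR\sum_{s=0}^{t-1}(1-C\alpha)^{t-s}\rho^s$.

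It then remains to evaluate the geometric-type sum $\sum_{s=0}^{t-1}(1-C\alpha)^{t-s}\rho^s$ in the two regimes. When $1-C\alpha=\rho$ every summand equals $\rho^t$, so the sum is $t\rho^t$, which produces $tV_\alpha\alpha bR\rho^t$; adding the endpoint contribution $\alpha bR\rho^t$ reproduces $\mathcal{R}(t)$ in the first case. When $1-C\alpha\neq\rho$ the finite geometric sum equals $\tfrac{(1-C\alpha)[(1-C\alpha)^t-\rho^t]}{1-C\alpha-\rho}$, which after multiplication by $V_\alpha\alpha bR$ and addition of the endpoint term gives precisely $\mathcal{R}(t)$ in the second case. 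The main obstacle is the first paragraph: isolating the $\lVert w_\alpha(t)\rVert$-dependence of $P_t$ in the clean form $b\rho^t\lVert w_\alpha(t)\rVert$ and then closing the recursion through the a priori bound $R$. Once the recursion $e(t+1)\le(1-C\alpha)\gamma_t e(t)+\alpha bR\rho^t$ is in hand, the remainder is a careful but routine unrolling and summation, the only delicate points being the convergence of $V_\alpha$ and the separate treatment of the endpoint term so that the stated constants match exactly.
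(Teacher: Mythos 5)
Your proposal is correct and follows essentially the same route as the paper's own proof: the same decomposition $w(t+1)=T_\alpha(w(t))+\alpha P_t(w(t))$ combined with the contraction property of $T_\alpha$, the perturbation bound of Lemma \ref{lem-4-1}, and the uniform bound $R$ of Theorem \ref{thm-2-4}, followed by the same unrolling with the endpoint term separated and the same two-case evaluation of the geometric sum. Your argument for $V_\alpha<\infty$ (nonnegative terms with summable $\rho^j$) is also the content of Lemma \ref{lem-4-2}, so nothing is missing.
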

The proof of this result will be given in Section \ref{sec-4}. In the following result, we estimate the distance between the fixed point $w^\alpha$ of $T_{\alpha}$ and minimizer $x_*$ of the aggregate \mbox{cost $f$.}  
\begin{thm}\label{thm-2-6}
Consider the two cases and the value $\alpha_0 >0$ of Theorem \ref{thm-2-3}. Then, for $\alpha \in (0,\alpha_0]$ we have
\begin{equation*}
\Vert w^\alpha-n\pi \otimes x_* \Vert_{\pi\otimes 1_d}  \leq \frac{\alpha \rho}{1-\rho}\Big(1+\frac{L}{\gamma}\sqrt{\sum_{k=0}^n\frac{1}{\pi_k}}\Big)\Big( \frac{LR}{n\pi_{min}}+Q\Big),
\end{equation*}
where $\gamma=\frac{\bar{L}\mu}{\bar{L}+\mu}$, $Q=\|\nabla F(0)\|_{\pi\otimes 1_d}$ and $\pi_{min}=\min_{1\leq k\leq n}\pi_k$.
\end{thm}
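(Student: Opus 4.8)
The plan is to avoid the naive contraction estimate centered at $n\pi\otimes x_*$ (which only produces an $O(1)$ bound, since the factor $\tfrac{1}{C\alpha}$ coming from the contraction cancels the $O(\alpha)$ defect of $n\pi\otimes x_*$) and instead to split $w^\alpha$ into its consensus and disagreement parts and control each separately. Write the fixed-point identity as $w^\alpha=(W\otimes I_d)\big(w^\alpha-\alpha\nabla F(z^\alpha)\big)$ with $z^\alpha=\tfrac{w^\alpha}{n\pi}$, and recall that under Assumptions \ref{ass-1}--\ref{ass-2} one has $W^\infty=\pi 1_n^\top$, so that the $i$-th block of $(W^\infty\otimes I_d)w^\alpha$ equals $\pi_i S$ with $S:=\sum_{j=1}^n w_j^\alpha$. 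Setting $\bar z:=S/n$, each block of the consensus part corresponds to the common value $\bar z$, and a short computation using $\sum_j (w_\perp^\alpha)_j=0$ shows that $w_\perp^\alpha:=((I-W^\infty)\otimes I_d)w^\alpha$ and $n\pi\otimes(\bar z-x_*)$ are orthogonal in the $\|\cdot\|_{\pi\otimes1_d}$ inner product, with $\|n\pi\otimes(\bar z-x_*)\|_{\pi\otimes1_d}=n\|\bar z-x_*\|$. Thus it suffices to bound the two scalar quantities $\|w_\perp^\alpha\|_{\pi\otimes1_d}$ and $n\|\bar z-x_*\|$, since $\|w^\alpha-n\pi\otimes x_*\|_{\pi\otimes1_d}\le\|w_\perp^\alpha\|_{\pi\otimes1_d}+n\|\bar z-x_*\|$. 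The argument below only uses the $L_k$-smoothness of each $f_k$, the strong convexity of the aggregate, and the bound $R$ of Theorem \ref{thm-2-4}, so it applies uniformly to both Case 1 and Case 2.

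First I would control the disagreement $w_\perp^\alpha$. Applying $(I-W^\infty)\otimes I_d$ to the fixed-point identity and using $(I-W^\infty)W=W-W^\infty$ (from $W^\infty W=W^\infty$) together with $(W-W^\infty)W^\infty=0$, one obtains the self-referential equation $w_\perp^\alpha=((W-W^\infty)\otimes I_d)w_\perp^\alpha-\alpha((W-W^\infty)\otimes I_d)\nabla F(z^\alpha)$. Since $|||(W-W^\infty)\otimes I_d|||_{\pi\otimes1_d}=|||W-W^\infty|||_\pi=\rho$, taking $\|\cdot\|_{\pi\otimes1_d}$ and rearranging gives $\|w_\perp^\alpha\|_{\pi\otimes1_d}\le\tfrac{\alpha\rho}{1-\rho}\|\nabla F(z^\alpha)\|_{\pi\otimes1_d}$, which is precisely where the factor $\tfrac{\alpha\rho}{1-\rho}$ enters. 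To finish this part I would write $\|\nabla F(z^\alpha)\|_{\pi\otimes1_d}\le\|\nabla F(z^\alpha)-\nabla F(0)\|_{\pi\otimes1_d}+Q$, and estimate the first term by $L$-smoothness combined with $\|z_j^\alpha\|=\|w_j^\alpha\|/(n\pi_j)\le\|w_j^\alpha\|/(n\pi_{min})$ and the uniform bound $\|w^\alpha\|_{\pi\otimes1_d}\le R$ of Theorem \ref{thm-2-4}, yielding $\|\nabla F(z^\alpha)\|_{\pi\otimes1_d}\le\tfrac{LR}{n\pi_{min}}+Q$.

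Next I would control $\bar z-x_*$ through an optimality condition forced by the fixed point. Summing the $n$ blocks of the fixed-point identity and using column stochasticity $\sum_i W_{ij}=1$ collapses the linear part and gives $\sum_{j=1}^n\nabla f_j(z_j^\alpha)=0$. Writing $\nabla f(\bar z)=\tfrac1n\sum_j\big(\nabla f_j(\bar z)-\nabla f_j(z_j^\alpha)\big)$, bounding by $L$-smoothness, and using $z_j^\alpha-\bar z=(w_\perp^\alpha)_j/(n\pi_j)$ with Cauchy--Schwarz to pass from $\sum_j\|z_j^\alpha-\bar z\|$ to $\sqrt{\sum_j 1/\pi_j}\,\|w_\perp^\alpha\|_{\pi\otimes1_d}$, I obtain $\|\nabla f(\bar z)\|$ bounded by a constant multiple of $\sqrt{\sum_j 1/\pi_j}\,\|w_\perp^\alpha\|_{\pi\otimes1_d}$. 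Finally, since $f$ is $\mu$-strongly convex with $\mu\ge\gamma$ and $\nabla f(x_*)=0$, strong convexity gives $\gamma\|\bar z-x_*\|^2\le\langle\nabla f(\bar z),\bar z-x_*\rangle\le\|\nabla f(\bar z)\|\,\|\bar z-x_*\|$, hence $\|\bar z-x_*\|\le\tfrac1\gamma\|\nabla f(\bar z)\|$, which supplies the factor $\tfrac{L}{\gamma}\sqrt{\sum_j 1/\pi_j}$. Substituting the disagreement bound for $\|w_\perp^\alpha\|_{\pi\otimes1_d}$, combining the two estimates through the triangle inequality of the first paragraph, and factoring out $\tfrac{\alpha\rho}{1-\rho}\big(\tfrac{LR}{n\pi_{min}}+Q\big)$ then yields the asserted inequality.

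\textbf{Main obstacle.} The conceptual crux is recognizing that the direct contraction estimate is lossy and that the $O(\alpha)$ rate must come from the disagreement dynamics: deriving the self-referential equation for $w_\perp^\alpha$ and verifying that the induced norm of $(W-W^\infty)\otimes I_d$ on $\mathbb{R}^{nd}$ equals $\rho$ is the decisive step, as it is what converts the $O(\alpha)$ gradient contribution into the $O(\alpha)$ consensus error rather than into an $O(1)$ term. The rest is careful bookkeeping—tracking the powers of $n$, the role of $\pi_{min}$, and the passage from the strong-convexity modulus $\mu$ to $\gamma$ (legitimate because $\gamma\le\mu$)—which only needs to land within the stated constant rather than reproduce it sharply.
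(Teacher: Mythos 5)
Your proposal is correct, and its skeleton coincides with the paper's proof: the same triangle-inequality decomposition into consensus and disagreement parts (the paper's Lemma \ref{lem-5-1}, using $(W^\infty\otimes I_d)w^\alpha = n\pi\otimes\bar w^\alpha$), the same self-referential equation for the disagreement obtained by hitting the fixed-point identity with $(I_n-W^\infty)\otimes I_d$ and using $W^\infty W = WW^\infty = (W^\infty)^2 = W^\infty$ together with $||| (W-W^\infty)\otimes I_d |||_{\pi\otimes 1_d} = \rho$ (the paper's Proposition \ref{pro-5-5} and Lemma \ref{lem-5-2}), the same bound $\big\|\nabla F\big(\tfrac{w^\alpha}{n\pi}\big)\big\|_{\pi\otimes 1_d}\le \tfrac{LR}{n\pi_{min}}+Q$ via Theorem \ref{thm-2-4}, and the same Cauchy--Schwarz passage (the paper's Lemma \ref{lem-5-4}). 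The one genuine difference is how you bound $\|\bar w^\alpha - x_*\|$: the paper (Lemma \ref{lem-5-3}) adds and subtracts $\tfrac{\alpha}{n}\sum_i\nabla f_i(\bar w^\alpha)$ in the averaged fixed-point identity and invokes the gradient-step contraction of Lemma \ref{lem-3-2} applied to the aggregate cost, which tacitly requires a stepsize restriction of the form $\alpha\le 2/(\mu+\bar L)$ that the paper does not verify; you instead extract the exact stationarity relation $\sum_j\nabla f_j(z_j^\alpha)=0$ from column stochasticity and apply the coercivity inequality $\mu\|\bar z - x_*\|^2\le\langle\nabla f(\bar z)-\nabla f(x_*),\bar z-x_*\rangle$, which needs no stepsize condition at all and works verbatim in both Case 1 and Case 2 (where only the aggregate is strongly convex). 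This variant is slightly cleaner and, as you note, lands within the stated constant (your bookkeeping actually gives $\tfrac{L}{n\gamma}$ where the paper's combination of Lemmas \ref{lem-5-3} and \ref{lem-5-4} gives $\tfrac{L}{\gamma}$, and $\gamma\le\mu$ makes the substitution of $\gamma$ for $\mu$ legitimate); your orthogonality observation about $w_\perp^\alpha$ and $n\pi\otimes(\bar z - x_*)$ in the weighted inner product is also correct, though, as in the paper, the plain triangle inequality suffices.
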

The proof of this result is given in Section \ref{sec-5}.   Combining this fact with Theorem \ref{thm-2-5}, we will prove the result of Theorem \ref{thm-1-2} that  $w_\alpha(t)$ converges to the $O(\alpha)$-neighborhood   of minimizer $x_*$.

\section{Properties of $T_{\alpha}$}\label{sec-3}

In this section, we investigate the contraction property of the mapping $T_{\alpha}$ defined in \eqref{eq-1-21} and show that the fixed point $w^\alpha$ of $T_{\alpha}$ is uniformly bounded for $\alpha \in (0,\alpha_0]$. Namely, we   prove the results of Theorem \ref{thm-2-3} and Theorem \ref{thm-2-4}.
  
We begin with recalling the following Lemma. 
\begin{lem}\label{lem-3-1}Suppose g: $\mathbb{R}^{nd}$ $\rightarrow$ $\mathbb{R}$ is $\mu$-strongly convex and $L$-smooth for some $L$$>$0 and $\mu$$>$0. Then 
\begin{equation*}
\langle x-y, \nabla g(x) - \nabla g(y) \rangle \geq \frac{\mu L}{\mu + L}\Vert x-y \Vert^2 + \frac{1}{\mu + L} \Vert  \nabla g(x) - \nabla g(y) \Vert^2.
\end{equation*}
holds for all $x$,$y$ $\in$ $\mathbb{R}^{nd}$
\end{lem}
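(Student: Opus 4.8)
The plan is to reduce the strongly convex case to the plain convex case by the standard shifting trick, and then invoke the co-coercivity (Baillon--Haddad) inequality for convex smooth functions. First I would set $h(x) = g(x) - \frac{\mu}{2}\|x\|^2$. Since $g$ is $\mu$-strongly convex, $h$ is convex; since $g$ is $L$-smooth, $h$ is $(L-\mu)$-smooth. When $L=\mu$ the gradient of $g$ is affine and the claimed estimate reduces to an equality that one checks directly, so in the main argument I may assume $L>\mu$ and divide by $L-\mu$ freely.

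The core step is to establish the co-coercivity of $h$, namely
$$\langle \nabla h(x) - \nabla h(y),\, x-y\rangle \geq \frac{1}{L-\mu}\|\nabla h(x) - \nabla h(y)\|^2 \quad \forall~x,y.$$
To prove this I would, for fixed $x$, introduce the auxiliary function $\phi_x(z) = h(z) - \langle \nabla h(x), z\rangle$, which is convex, $(L-\mu)$-smooth, and minimized at $z=x$ since $\nabla \phi_x(x)=0$. Applying the descent lemma to $\phi_x$ and then minimizing the resulting quadratic upper bound over the free variable (equivalently, taking a gradient step of length $1/(L-\mu)$) yields $\phi_x(x) \le \phi_x(z) - \frac{1}{2(L-\mu)}\|\nabla h(z) - \nabla h(x)\|^2$. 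Writing this inequality once with $z=y$ and once with the roles of $x$ and $y$ exchanged, and adding the two, the function values $h(x)$ and $h(y)$ cancel and exactly the co-coercivity bound above remains.

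Finally I would substitute $\nabla h(\cdot) = \nabla g(\cdot) - \mu(\cdot)$ back into the co-coercivity inequality. Abbreviating $D = \langle x-y, \nabla g(x) - \nabla g(y)\rangle$, $A = \|x-y\|^2$, and $B = \|\nabla g(x)-\nabla g(y)\|^2$, the left side becomes $D - \mu A$ and the right side becomes $\frac{1}{L-\mu}(B - 2\mu D + \mu^2 A)$. Multiplying through by $L-\mu>0$ and collecting terms, the cross contributions combine so that the coefficient of $D$ becomes $L+\mu$ while the $\mu^2 A$ terms cancel, leaving $(L+\mu)D \geq \mu L\,A + B$; dividing by $L+\mu$ gives precisely the claimed inequality.

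I expect the main obstacle to be the co-coercivity step, as it is the only part that is not pure algebra: it rests on the variational ``minimize the descent-lemma upper bound'' argument together with the symmetrization trick of adding the two instances. The shift at the start and the substitution and rearrangement at the end are routine bookkeeping. I note that the whole statement is Theorem~2.1.12 in Nesterov's text on convex optimization, so if a self-contained exposition is not required one may simply cite it; the argument above makes the proof self-contained modulo the descent lemma.
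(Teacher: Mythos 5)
Your proof is correct, but it takes a genuinely different route from the paper, which offers no argument at all: the authors simply write that they refer to \cite{B} (Bubeck's monograph) for a proof of this lemma. Your self-contained argument---shifting by $\frac{\mu}{2}\|x\|^2$ to reduce to a convex function $h$ satisfying an $(L-\mu)$ quadratic upper bound, proving co-coercivity of $h$ via the auxiliary function $\phi_x(z)=h(z)-\langle\nabla h(x),z\rangle$ and the minimized descent-lemma bound, then symmetrizing and substituting back---is precisely the classical textbook proof (Nesterov, Theorem 2.1.12), and your final algebra checks out: $(L-\mu)(D-\mu A)\ge B-2\mu D+\mu^2 A$ rearranges to $(L+\mu)D\ge \mu L A + B$, and in the degenerate case $L=\mu$ one indeed has $\nabla g(x)=\mu x + c$ so both sides equal $\mu\|x-y\|^2$. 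One small point of rigor: your claim that ``$h$ is $(L-\mu)$-smooth'' in the Lipschitz-gradient sense does not follow from the triangle inequality (that only gives $L+\mu$) and would need its own argument; however, your core step uses only the one-sided descent inequality $h(z)\le h(y)+\langle\nabla h(y),z-y\rangle+\frac{L-\mu}{2}\|z-y\|^2$, which follows immediately by subtracting $\frac{\mu}{2}\|z-y\|^2$ from the descent lemma for $g$, so the proof stands once phrased that way. What your approach buys is self-containedness; what the paper's citation buys is brevity---and your closing remark that one may simply cite the literature is, in fact, exactly what the paper does.
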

\begin{proof} We refer to \cite{B} for a proof of this Lemma. 
\end{proof}
Using this Lemma, one may prove the following well-known result.
\begin{lem}\label{lem-3-2}Suppose g: $\mathbb{R}^{d}$ $\rightarrow$ $\mathbb{R}$ is $\mu$-strongly convex and $L$-smooth for some $L  > 0$ and $\mu > 0$. Then for any $\alpha \in \big(  0,  \frac{2}{\mu + L} \big]$, the following estimate holds.
\begin{equation*}
\Vert x-y- \alpha(\nabla g(x) - \nabla g(y)) \Vert \leq (1 -\gamma\alpha)\Vert x-y \Vert \quad \forall~x, y \in \mathbb{R}^{d},
\end{equation*}
where  $\gamma =\frac{\mu L}{\mu + L}$. 
\end{lem}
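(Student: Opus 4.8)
The plan is to square both sides and reduce the claim to the single inequality furnished by Lemma \ref{lem-3-1}. Writing $u = x-y$ and $v = \nabla g(x) - \nabla g(y)$, the quantity to be controlled is $\|u - \alpha v\|^2$ and the target bound is $(1-\gamma\alpha)^2\|u\|^2$ with $\gamma = \frac{\mu L}{\mu+L}$. Since I will check at the end that $1-\gamma\alpha \geq 0$ on the admissible range of $\alpha$, proving the squared version is equivalent to the stated estimate, and taking square roots will be legitimate.

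First I would expand
\[
\|u - \alpha v\|^2 = \|u\|^2 - 2\alpha\langle u, v\rangle + \alpha^2\|v\|^2,
\]
and then insert the lower bound of Lemma \ref{lem-3-1}, namely $\langle u, v\rangle \geq \gamma\|u\|^2 + \frac{1}{\mu+L}\|v\|^2$, into the cross term. This yields
\[
\|u-\alpha v\|^2 \leq (1 - 2\gamma\alpha)\|u\|^2 + \Big(\alpha^2 - \tfrac{2\alpha}{\mu+L}\Big)\|v\|^2 .
\]
The decisive step is to use the stepsize hypothesis $\alpha \leq \frac{2}{\mu+L}$, which makes the coefficient $\alpha^2 - \frac{2\alpha}{\mu+L} = \alpha\big(\alpha - \frac{2}{\mu+L}\big)$ nonpositive, so the whole $\|v\|^2$-term may be discarded. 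What remains is $\|u - \alpha v\|^2 \leq (1-2\gamma\alpha)\|u\|^2$, and since $(1-\gamma\alpha)^2 = 1 - 2\gamma\alpha + \gamma^2\alpha^2 \geq 1-2\gamma\alpha$, this already gives $\|u-\alpha v\|^2 \leq (1-\gamma\alpha)^2\|u\|^2$.

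Finally I would justify the nonnegativity of $1-\gamma\alpha$ needed to take square roots: on the range $\alpha \leq \frac{2}{\mu+L}$ one has $\gamma\alpha \leq \frac{2\mu L}{(\mu+L)^2} \leq \frac{1}{2}$ by the elementary inequality $(\mu+L)^2 \geq 4\mu L$, so $1-\gamma\alpha \geq \frac12 > 0$. There is no genuine obstacle here beyond carefully tracking the two coefficients; the only conceptual point is that one must invoke the \emph{full} strength of Lemma \ref{lem-3-1} (with its $\|v\|^2$-term) rather than plain strong monotonicity, since it is precisely that extra term which the stepsize restriction $\alpha \leq \frac{2}{\mu+L}$ is designed to absorb.
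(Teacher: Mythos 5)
Your proof is correct and follows essentially the same route as the paper: expand the square, insert the full coercivity bound of Lemma \ref{lem-3-1}, use $\alpha \leq \frac{2}{\mu+L}$ to discard the gradient-difference term, and then pass from $1-2\gamma\alpha$ to $(1-\gamma\alpha)^2$. Your explicit verification that $1-\gamma\alpha \geq \frac{1}{2} > 0$ (so the square root step is legitimate) is a small point of extra care that the paper leaves implicit.
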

\begin{proof}We give a proof for the reader's convenience. Using Lemma \ref{lem-3-1}, we get
\begin{equation*}
\begin{split}
&\Vert  x-y- \alpha(\nabla g(x) - \nabla g(y)) \Vert^2
\\
 &= \Vert x-y \Vert^2 -2\alpha\langle x-y, \nabla g(x) - \nabla g(y) \rangle + \alpha^2 \Vert \nabla g(x) - \nabla g(y)  \Vert^2
\\
&\leq \Big( 1- \frac{2\alpha \mu L}{\mu+L}\Big)\Vert x-y \Vert^2 +\Big( \alpha^2   -\frac{2\alpha}{\mu + L} \Big)\Vert  \nabla g(x) - \nabla g(y) \Vert^2.
\end{split}
\end{equation*}
This inequality for $\alpha \leq \frac{2}{\mu +L}$ yields 
\begin{equation*}
\begin{split}
\Vert  x-y- \alpha(\nabla g(x) - \nabla g(y)) \Vert^2 &\leq \Big(1 - \frac{2\mu L}{\mu + L}\alpha\Big) \Vert x-y \Vert^2
\\
&\leq \Big(1- \frac{\mu L}{\mu + L}\alpha\Big)^2 \Vert x-y \Vert^2.
\end{split}
\end{equation*}
The proof is done.
\end{proof}
 Based on this lemma, we derive the following result.
\begin{lem}\label{lem-3-30}
Suppose that for $1\leq k \leq n$, the local cost $f_k$ is $L_k$-smooth and $\mu_k$-strongly convex. Then, for ${\alpha} \leq \Big(0,~ \min_{1 \leq k \leq n}\frac{2{n\pi_k}}{\mu_k +L_k}\Big]$ we have
\begin{equation*}
\Big\| w- v- \alpha \Big( \nabla F\Big( \frac{w}{n\pi}\Big) - \nabla F\Big( \frac{v}{n\pi}\Big)\Big) \Big\|_{\pi\otimes1_d} \leq ( 1- C\alpha) \|w-v\|_{\pi\otimes1_d},
\end{equation*}
where $C=\min_{1 \leq k \leq n} \frac{\mu_k L_k }{n \pi_k (\mu_k +L_k)}$.
\end{lem}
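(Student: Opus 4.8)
The plan is to exploit the block structure of the $(\pi\otimes1_d)$-norm, which decouples the desired estimate into $n$ independent bounds, one per block, and then to reduce each block to Lemma \ref{lem-3-2} after absorbing the scaling factor $n\pi_k$ into a rescaled stepsize. First I would expand the squared norm on the left using $\|w\|_{\pi\otimes1_d}^2=\sum_{k=1}^n\frac{1}{\pi_k}\|w_k\|^2$, which gives
\begin{equation*}
\Big\| w- v- \alpha \Big( \nabla F\Big( \tfrac{w}{n\pi}\Big) - \nabla F\Big( \tfrac{v}{n\pi}\Big)\Big) \Big\|_{\pi\otimes1_d}^2 = \sum_{k=1}^n \frac{1}{\pi_k}\Big\| w_k-v_k-\alpha\Big(\nabla f_k\Big(\tfrac{w_k}{n\pi_k}\Big)-\nabla f_k\Big(\tfrac{v_k}{n\pi_k}\Big)\Big)\Big\|^2.
\end{equation*}
This reduces the problem to proving, for each fixed $k$, the scalar block estimate $\| w_k-v_k-\alpha(\nabla f_k(\tfrac{w_k}{n\pi_k})-\nabla f_k(\tfrac{v_k}{n\pi_k}))\|\leq(1-C\alpha)\|w_k-v_k\|$.

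The crux is a change of variables within block $k$. Setting $a=\frac{w_k}{n\pi_k}$ and $b=\frac{v_k}{n\pi_k}$, so that $w_k=n\pi_k a$ and $v_k=n\pi_k b$, I would factor out $n\pi_k$ to write
\begin{equation*}
w_k-v_k-\alpha\big(\nabla f_k(a)-\nabla f_k(b)\big)=n\pi_k\Big[(a-b)-\tfrac{\alpha}{n\pi_k}\big(\nabla f_k(a)-\nabla f_k(b)\big)\Big].
\end{equation*}
The bracketed expression is precisely the quantity controlled by Lemma \ref{lem-3-2} applied to $g=f_k$ with the rescaled stepsize $\tilde\alpha_k:=\frac{\alpha}{n\pi_k}$. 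The stepsize hypothesis of that lemma, $\tilde\alpha_k\leq\frac{2}{\mu_k+L_k}$, is exactly equivalent to $\alpha\leq\frac{2n\pi_k}{\mu_k+L_k}$, which is guaranteed by the standing assumption $\alpha\leq\min_{1\leq k\leq n}\frac{2n\pi_k}{\mu_k+L_k}$. Lemma \ref{lem-3-2} then yields the bracket bound with contraction factor $1-\gamma_k\tilde\alpha_k$, where $\gamma_k=\frac{\mu_k L_k}{\mu_k+L_k}$.

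Finally I would undo the scaling. Since $\|a-b\|=\frac{1}{n\pi_k}\|w_k-v_k\|$, the factor $n\pi_k$ cancels exactly, leaving the block estimate with contraction factor $1-\gamma_k\tilde\alpha_k=1-\frac{\mu_k L_k}{n\pi_k(\mu_k+L_k)}\alpha$. Because $C=\min_{1\leq k\leq n}\frac{\mu_k L_k}{n\pi_k(\mu_k+L_k)}$ is the smallest of these per-block rates, each factor is at most $1-C\alpha$; inserting this into the sum above and taking square roots gives the claimed inequality. I do not expect a genuine obstacle here: the only point requiring care is checking that the factors $n\pi_k$ cancel precisely and that the stepsize condition transforms correctly under the rescaling $\tilde\alpha_k=\alpha/(n\pi_k)$, which is exactly what matches the assumed range of $\alpha$ to the hypothesis of Lemma \ref{lem-3-2} and identifies the uniform contraction rate $C$.
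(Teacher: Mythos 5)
Your proposal is correct and follows essentially the same route as the paper's proof: both reduce the claim to per-block estimates via the block structure of the $(\pi\otimes 1_d)$-norm, apply Lemma \ref{lem-3-2} to $f_k$ with the rescaled stepsize $\alpha/(n\pi_k)$, cancel the $n\pi_k$ factors, and bound each block rate by the minimum rate $C$. The only cosmetic difference is that you factor $n\pi_k$ out of the block expression while the paper multiplies the rescaled inequality through by $n\pi_k$, which is the same computation.
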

\begin{proof}If the stepsize satisfies $\frac{\alpha}{n\pi_k} \leq \frac{2}{\mu_k +L_k}$ for  each $1\leq k \leq n$, we have
\begin{equation*}
\Big\| \frac{w_k}{n\pi_k} -\frac{v_k}{n\pi_k} -\frac{\alpha}{n\pi_k} \Big( \nabla f_k \Big( \frac{w_k}{n\pi_k}\Big) - \nabla f_k \Big( \frac{v_k}{n\pi_k}\Big)\Big)\Big\| \leq \Big(1 - \frac{\mu_k L_k}{\mu_k +L_k} \cdot \frac{\alpha}{n\pi_k}\Big) \Big\|\frac{w_k}{n\pi_k} -\frac{v_k}{n\pi_k}\Big\|,
\end{equation*}
which gives
\begin{equation*}
\Big\| w_k -v_k -\alpha \Big( \nabla f_k \Big( \frac{w_k}{n\pi_k}\Big) - \nabla f_k \Big( \frac{v_k}{n\pi_k}\Big)\Big)\Big\| \leq \Big(1 - \frac{\mu_k L_k}{\mu_k +L_k} \cdot \frac{\alpha}{n\pi_k}\Big) \|{w_k}-{v_k}\|.
\end{equation*}
This directly yields
\begin{equation*}
\begin{split}
\Big\| w- v- \alpha \Big( \nabla F\Big( \frac{w}{n\pi}\Big) - \nabla F\Big( \frac{v}{n\pi}\Big)\Big) \Big\|_{\pi\otimes1_d} &\leq \max_{1 \leq k \leq n} \Big( 1- \frac{\mu_k L_k \alpha}{n \pi_k (\mu_k +L_k)}\Big) \|w-v\|_{\pi\otimes1_d}
\\
&\leq  \Big( 1- \Big[\min_{1 \leq k \leq n}\frac{\mu_k L_k }{n \pi_k (\mu_k +L_k)}\Big]\alpha\Big) \|w-v\|_{\pi\otimes1_d}.
\end{split}
\end{equation*}
The proof is done.
\end{proof}
Define a function $J(w) :\mathbb{R}^{nd}\rightarrow \mathbb{R}^{nd}$ as
\begin{equation}\label{eq-3-18}
J(w) = \Big(
\sum_{j=1}^n W_{1j}w_j^\top , \cdots,\sum_{j=1}^n W_{nj}w_j^\top \Big)^\top \quad \textrm{for}~w=(w_1^\top, \cdots, w_n^\top)^\top \in \mathbb{R}^{nd}.
\end{equation}
We have the following result.
\begin{lem}\label{lem-3-3}
We have
\begin{equation}\label{eq-3-26}
\|J(x)\|_{\pi\otimes1_d} \leq  \|x\|_{\pi\otimes1_d}\quad \forall~x  \in \mathbb{R}^{nd}.
\end{equation}
In addition, the equality holds if and only if there exists a vector $\zeta \in \mathbb{R}^d$  such that
 \begin{equation}\label{eq-3-15}
x_j=\pi_j \zeta\quad \textrm{for}~1 \leq j \leq n.
\end{equation}
\end{lem}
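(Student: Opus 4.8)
The plan is to reduce the block inequality to a scalar, column-wise statement and then to prove that $W$ is non-expansive in the $\pi$-norm by a convexity (Jensen) argument, which also hands us the equality case essentially for free. First I would identify $x=(x_1^\top,\dots,x_n^\top)^\top\in\R^{nd}$ with the matrix $X\in\R^{n\times d}$ whose $j$-th row is $x_j^\top$, so that the $i$-th block of $J(x)$ is $\sum_{j}W_{ij}x_j$, i.e.\ the $i$-th row of $WX$. Writing $X_{\cdot c}$ for the $c$-th column of $X$ and expanding the weighted norms block-by-block and then coordinate-by-coordinate, one gets
\[
\|J(x)\|_{\pi\otimes1_d}^2=\sum_{c=1}^d\|W X_{\cdot c}\|_\pi^2,
\qquad
\|x\|_{\pi\otimes1_d}^2=\sum_{c=1}^d\|X_{\cdot c}\|_\pi^2 ,
\]
so it suffices to prove the scalar estimate $\|Wv\|_\pi\le\|v\|_\pi$ for every $v\in\R^n$, together with its equality condition.

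For the scalar estimate the key observation is that, for each fixed $i$, the nonnegative weights $c_{ij}:=W_{ij}\pi_j/\pi_i$ form a probability vector in $j$: indeed $\sum_j c_{ij}=\pi_i^{-1}\sum_j W_{ij}\pi_j=1$ by the right-eigenvector identity $W\pi=\pi$. Since $(Wv)_i/\pi_i=\sum_j c_{ij}\,(v_j/\pi_j)$, Jensen's inequality (equivalently Cauchy--Schwarz against the weights $c_{ij}$) gives $\big((Wv)_i/\pi_i\big)^2\le\sum_j c_{ij}\,(v_j/\pi_j)^2$. Multiplying by $\pi_i$, summing over $i$, and interchanging the order of summation, the factor $\sum_i W_{ij}=1$ from column stochasticity (Assumption \ref{ass-2}) collapses the double sum and leaves exactly $\sum_j v_j^2/\pi_j=\|v\|_\pi^2$. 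Applying this in each column $c$ proves \eqref{eq-3-26}.

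For the equality case, equality in Jensen's inequality for a fixed $i$ forces the ratio $v_j/\pi_j$ to be constant over the support $\{j:W_{ij}>0\}$, which always contains $j=i$ because $W_{ii}>0$; hence $v_i/\pi_i=v_j/\pi_j$ whenever $W_{ij}>0$. Since $G$ is strongly connected (Assumption \ref{ass-1}), the support graph of $W$ is connected, so these row-local constraints propagate to a single global constant $v_j/\pi_j\equiv\zeta_c$. Carrying this out in each column $c$ yields $x_j=\pi_j\zeta$ with $\zeta=(\zeta_1,\dots,\zeta_d)^\top$, which is \eqref{eq-3-15}. The converse is immediate: if $x_j=\pi_j\zeta$ then $\sum_j W_{ij}x_j=\big(\sum_j W_{ij}\pi_j\big)\zeta=\pi_i\zeta$, so $J(x)$ has the same form and a one-line computation gives $\|J(x)\|_{\pi\otimes1_d}^2=\|\zeta\|^2=\|x\|_{\pi\otimes1_d}^2$.

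I expect the only delicate point to be the equality analysis: one must argue that the local Jensen-equality constraints, each imposed only over the support of a single row of $W$, combine through strong connectivity into one global proportionality constant rather than merely a piecewise-constant ratio. The inequality itself is routine once the convex weights $c_{ij}=W_{ij}\pi_j/\pi_i$ are identified and the twin stochasticity relations $W\pi=\pi$ and $\sum_i W_{ij}=1$ are used in the two different summations.
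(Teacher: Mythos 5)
Your proof is correct, but it runs along a genuinely different track than the paper's. The paper works at the block level: it first applies the triangle inequality $\lVert \sum_j W_{kj}x_j\rVert \le \sum_j W_{kj}\lVert x_j\rVert$ and then Cauchy--Schwarz with the splitting $W_{kj}\lVert x_j\rVert = \sqrt{W_{kj}\pi_j}\cdot\big(\sqrt{W_{kj}}/\sqrt{\pi_j}\big)\lVert x_j\rVert$, so its equality analysis has to combine \emph{two} separate equality conditions -- Cauchy--Schwarz equality, which only yields that the ratios $\lVert x_j\rVert/\pi_j$ are globally constant, and triangle-inequality equality, which supplies the missing directional information that the $x_j$ are pairwise nonnegative multiples of one another -- before connectivity can produce a single vector $\zeta$. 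You instead decompose coordinate-wise into the $d$ scalar columns, reduce everything to the statement $\lVert Wv\rVert_\pi\le\lVert v\rVert_\pi$ for $v\in\mathbb{R}^n$, and prove that by Jensen's inequality with the probability weights $c_{ij}=W_{ij}\pi_j/\pi_i$ (the same two stochasticity identities $W\pi=\pi$ and $\sum_i W_{ij}=1$ appear, just reorganized). The payoff of your route is precisely in the equality case: because you work with \emph{signed} scalar entries rather than norms of blocks, strict convexity of the square forces $v_j/\pi_j$ to be constant on each row support, signs included, and connectivity propagates this to a global constant per column; no separate argument about parallel directions is needed. One minor remark: your appeal to $W_{ii}>0$ is legitimate under the paper's standing assumption on $W$, but it is not actually needed -- the symmetric relation ``$v_i/\pi_i=v_j/\pi_j$ whenever $W_{ij}>0$'' together with connectivity of the support graph already suffices. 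Also note that your column-wise reduction silently uses the fact that a sum of same-direction inequalities being an equality forces each summand to be an equality; this is fine, but worth stating explicitly when you pass from equality in $\lVert J(x)\rVert_{\pi\otimes 1_d}=\lVert x\rVert_{\pi\otimes 1_d}$ to equality in every column and then in every row $i$.
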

\begin{proof}
By the triangle inequality, for any $x \in \mathbb{R}^{nd}$
\begin{equation}\label{eq-3-20}
\begin{split}
\|J(x)\|_{\pi\otimes1_d} &=\Big[\sum_{k=1}^n\frac{1}{\pi_k} \Vert \sum_{j=1}^n W_{kj}x_j\Vert^2\Big]^{\frac{1}{2}}
\\
&\leq \Big[\sum_{k=1}^n\frac{1}{\pi_k}  \Big(\sum_{j=1}^n W_{kj}\Vert x_j\Vert\Big)^2\Big]^{\frac{1}{2}}
\\
&=\Big[\sum_{k=1}^n\frac{1}{\pi_k} \Big( \sum_{j=1}^n \sqrt{W_{kj} \pi_j}\frac{\sqrt{W_{kj}}}{\sqrt{\pi_j}}\Vert x_j\Vert\Big)^2\Big]^{\frac{1}{2}}.
\end{split}
\end{equation}
We use the Cauchy-Schwartz inequality to get
\begin{equation}\label{eq-3-21}
\begin{split}
\|J(x)\|_{\pi\otimes1_d}
&\leq \Big[\sum_{k=1}^n\frac{1}{\pi_k}  \sum_{j=1}^n W_{kj}\pi_j\sum_{j=1}^n\frac{W_{kj}}{\pi_j}\Vert x_j\Vert^2\Big]^{\frac{1}{2}}
\\
&=\Big[\sum_{k=1}^n\sum_{j=1}^n\frac{W_{kj}}{\pi_j}\Vert x_j\Vert^2\Big]^{\frac{1}{2}}
\\
&=\Big[\sum_{j=1}^n\frac{1}{\pi_j}\Vert x_j\Vert^2\Big]^{\frac{1}{2}}=\|x\|_{\pi\otimes1_d},
\end{split}
\end{equation}
where we used \eqref{eq-1-7} in the first equality and  \eqref{eq-1-8} in the second equality.

Next we assume that $x \in \mathbb{R}^{nd}$ satisfies the equality of the inequality \eqref{eq-3-26}. Then the two inequalities in \eqref{eq-3-20} and \eqref{eq-3-21} should hold equality.  For each $1\leq k \leq n$, we set $I_k = \{ j \in \{1,2,3 \cdots , n\}  ~|~  W_{kj} \neq 0 \}$. 
By the equality condition of the Cauchy-Schwarz inequality of \eqref{eq-3-21}, the following ratios 
\begin{equation*}
\frac{\sqrt{W_{kj}}}{\sqrt{\pi_j}}\|x_j\|\Big/\sqrt{W_{kj}\pi_j}=\frac{\|x_j\|}{\pi_j} 
\end{equation*} are same  for all $j \in I_k$. Combining this with the fact that the graph induced by $\{W_{kj}\}$ is connected, we see that 
\begin{equation}\label{eq-3-25}
\frac{\|x_1\|}{\pi_1} =  \frac{\|x_2\|}{\pi_2}  = \cdots = \frac{\|x_n\|}{\pi_n}.
\end{equation}
As the inequality \eqref{eq-3-20} holds equality, we find that equalities hold in the following inequalities.
\begin{equation}\label{eq-1-13}
\Vert \sum_{j=1}^n W_{kj}x_j\Vert \leq \sum_{j=1}^n W_{kj}\Vert x_j \Vert \quad \forall 1 \leq k \leq n.
\end{equation}
Using this and the equality condition of the triangle inequality, for each $1\leq k \leq n$ there are nonnegative constants $c_{jk}$ for $j \in I_k$ such that $x_j = c_{jk} x_k$. Combining this with the fact that the network  induced by $\{W_{kj}\}$ is connected, we find that for any $1\leq j, k\leq n$, there are values $c_{jk} \geq 0$ such that $x_j = c_{jk}x_k$.  This, together with  \eqref{eq-3-25}, yields that there exists a vector $\zeta \in \mathbb{R}^d$ such that $x_j = \pi_j \zeta$ for $1\leq j \leq n$. It finishes the proof.
\end{proof}

 Now we give the proof of Theorem \ref{thm-2-3} for Case 1.
\begin{proof}[Proof of Theorem \ref{thm-2-3} for Case 1] 
Using the definition \eqref{eq-1-21} of $T_{\alpha}$ we find 
\begin{equation*}
\begin{split}
&\Vert T_\alpha(w) - T_\alpha(v)\Vert_{\pi\otimes1_d} 
\\
&= \Big(\sum_{k=1}^n\frac{1}{\pi_k}\Big\Vert \sum_{j=1}^n W_{kj}\Big(w_j-v_j-\alpha\Big(\nabla f_j\Big(\frac{w_j}{n\pi_j}\Big)-\nabla f_j\Big(\frac{v_j}{n\pi_j}\Big)\Big)\Big)\Big\Vert^2\Big)^{\frac{1}{2}}
\\
&=\Big\| J\Big(w-v-\alpha\Big(\nabla F\Big(\frac{w}{n\pi}\Big)-\nabla F\Big(\frac{v}{n\pi}\Big)\Big)\Big)\Big\|_{\pi\otimes1_d},
\end{split}
\end{equation*}
where we have set $\nabla F\Big(\frac{w}{n\pi}\Big) =\Big(\nabla f_1\Big(\frac{w_1}{n\pi_1}\Big)^\top , \cdots, \nabla f_n\Big(\frac{w_n}{n\pi_n}\Big)^\top \Big)^\top $. Now we apply Lemma \ref{lem-3-3} and Lemma \ref{lem-3-30}  to derive the following estimate 
\begin{equation*}
\begin{split}
\Vert T_\alpha(w) - T_\alpha(v)\Vert_{\pi\otimes1_d}
&\leq \| w-v-\alpha\Big(\nabla F\Big(\frac{w}{n\pi}\Big)-\nabla F\Big(\frac{v}{n\pi}\Big)\Big)\|_{\pi\otimes1_d}
\\
&\leq ( 1-C\alpha)\Vert w - v \Vert_{\pi\otimes1_d},
\end{split}
\end{equation*}
The proof is done.
\end{proof} 
In order to prove Theorem \ref{thm-2-3} for Case 2, we prepare the following Lemma.
\begin{lem}\label{lem-3-4}
Let $A$ $\in$ $\mathbb{R}^{d\times d}$ be a symmetric and positive semidefinite matrix. Then, for  any $\alpha$ $\in$ $(0, \frac{2}{\max \lambda(A)})$, the following inequality holds 
\begin{equation*}
\Vert \big(I_d -\alpha A\big)x \Vert \leq \Vert x \Vert. \quad \forall x \in \mathbb{R}^{d}.
\end{equation*}
The equality in the above inequality holds if and only if
\begin{equation*}
 Ax = 0. 
\end{equation*}
\end{lem}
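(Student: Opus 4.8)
The plan is to reduce the matrix estimate to a coordinatewise inequality via the spectral theorem. Since $A$ is symmetric and positive semidefinite, I would first write $A = P^\top D P$ where $P$ is orthogonal and $D = \mathrm{diag}(\lambda_1, \dots, \lambda_d)$ collects the eigenvalues $\lambda_i \geq 0$ of $A$. Then $I_d - \alpha A = P^\top (I_d - \alpha D) P$, and because the Euclidean norm is invariant under the orthogonal map $P$, setting $y = Px$ gives $\|x\| = \|y\|$ together with
\begin{equation*}
\|(I_d - \alpha A)x\| = \|P^\top (I_d - \alpha D) P x\| = \|(I_d - \alpha D) y\|.
\end{equation*}
This converts the claim into a statement about the diagonal matrix $I_d - \alpha D$, which acts componentwise.

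Next I would carry out the componentwise estimate. Expanding in coordinates,
\begin{equation*}
\|(I_d - \alpha D) y\|^2 = \sum_{i=1}^d (1 - \alpha \lambda_i)^2 y_i^2,
\end{equation*}
so it suffices to control each factor $(1 - \alpha \lambda_i)^2$. The hypothesis $\alpha \in \big(0, \tfrac{2}{\max \lambda(A)}\big)$ forces $0 \leq \alpha \lambda_i \leq \alpha \max \lambda(A) < 2$ for every $i$, hence $1 - \alpha \lambda_i \in (-1, 1]$ and therefore $(1 - \alpha \lambda_i)^2 \leq 1$. Summing against the nonnegative weights $y_i^2$ yields $\|(I_d - \alpha D) y\|^2 \leq \sum_i y_i^2 = \|y\|^2$, which is the desired inequality after translating back through $P$.

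For the equality characterization I would track the strictness in the previous step. When $\lambda_i > 0$ the bound above is in fact strict, $0 < \alpha \lambda_i < 2$, so $(1 - \alpha \lambda_i)^2 < 1$; when $\lambda_i = 0$ the factor equals $1$. Consequently $\|(I_d - \alpha D) y\| = \|y\|$ holds if and only if $y_i = 0$ for every index $i$ with $\lambda_i > 0$, i.e.\ if and only if $D y = 0$. Finally I would convert this back to the original variable: $D y = 0$ is equivalent to $A x = P^\top D P x = P^\top D y = 0$, and conversely $Ax = 0$ gives $Dy = 0$, whence $(I_d - \alpha D)y = y$ and equality holds. This establishes both the inequality and the equality condition.

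I do not expect a genuine obstacle here, as the argument is a routine diagonalization; the only point requiring slight care is isolating the \emph{strict} inequality $(1-\alpha\lambda_i)^2 < 1$ on the positive part of the spectrum, which is exactly what the open constraint $\alpha < 2/\max\lambda(A)$ supplies and what makes the equality characterization $Ax = 0$ sharp.
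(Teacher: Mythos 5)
Your proposal is correct and follows essentially the same route as the paper's proof: diagonalize $A$ via the spectral theorem, use orthogonal invariance of the norm to reduce to the diagonal case, bound each factor $(1-\alpha\lambda_i)^2 \leq 1$, and characterize equality by noting that the bound is strict exactly on the positive part of the spectrum, which forces $Dy=0$ and hence $Ax=0$. The only cosmetic difference is that the paper reaches the same conclusion by expanding the equality $(1-\alpha\lambda_k)^2 w_k^2 = w_k^2$ into $\alpha(2-\alpha\lambda_k)\lambda_k w_k^2 = 0$ rather than invoking strictness directly.
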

\begin{proof}
Since $A$ is symmetric, it is diagonalizable in the sense that
\begin{equation*}
A = UDU^\top ,
\end{equation*}
where $D \in \mathbb{R}^{d\times d} = \textrm{diag}(\lambda_1, \cdots, \lambda_d)$ is a diagonoal matrix whose entries are eigenvalues of $A$ and $U \in \mathbb{R}^{d \times d}$ satisfies $UU^\top =U^\top U=I_d$. Using this decomposition,  we find the following equality
\begin{equation*}
\begin{split}
\Vert\big( I_d - \alpha A \big)x \Vert &=\Vert \big(I_d -\alpha UDU^\top \big)x \Vert
\\ 
&=\Vert \big(UU^\top -\alpha UDU^\top \big)x\Vert
\\
&=\Vert \big(U^\top -\alpha DU^\top \big)x\Vert
\\
&=\Vert \big(I_d -\alpha D\big)U^\top x \Vert.
\end{split}
\end{equation*}
Letting $w=U^\top  x\in \mathbb{R}^d$ in the above equality gives
\begin{equation}\label{eq-3-10}
\begin{split}
\Vert \big( I_d - \alpha A \big)x \Vert &=\Vert \big(I_d -\alpha D\big)w \Vert
\\
&=\Big(\sum_{k=1}^d (1-\alpha \lambda_k)^2w_k^2\Big)^{\frac{1}{2}}.
\end{split}
\end{equation}
Since $A$ is positive semi-definite, we have $\lambda_j \geq 0$ for $1 \leq j \leq d$. Combining this with the assumption $\alpha \in (0, \frac{2}{\max \lambda (A)})$, it follows that $-1 < 1 - \alpha \lambda_k \leq 1$. Therefore we have
\begin{equation}\label{eq-3-24} 
(1-\alpha \lambda_k)^2 w_k^2 \leq w_k^2 \quad \end{equation}
for any $1 \leq k \leq d$. Inserting this inequality in \eqref{eq-3-10} we get
\begin{equation}\label{eq-3-36}
\begin{split}
\Vert \big( I_d - \alpha A \big)x \Vert&\leq \Big(\sum_{k=1}^d w_k^2\Big)^{\frac{1}{2}} = \Vert w \Vert
=\Vert x \Vert,
\end{split}
\end{equation}
which is the desired inequality.

Now we proceed to prove the equality condition of the inequality. If $Ax=0$, then it is trivial that $\Vert \big(I_d -\alpha A\big)x \Vert = \Vert x \Vert$.

Conversely, we suppose that $\Vert \big(I_d -\alpha A\big)x \Vert = \Vert x \Vert$. Then for all $1\leq k \leq d$, the inequality \eqref{eq-3-24} should hold equality, i.e., 
\begin{equation*}
  (1-\alpha \lambda_k)^2w_k^2  =  w_k^2,
\end{equation*}
which leads to
\begin{equation*}
\alpha   (2-\alpha \lambda_k) \lambda_k w_k^2= 0.
\end{equation*}
Combining this with the fact that  $(2-\alpha \lambda_k) >0$ for $\alpha \in \Big( 0, \frac{2}{\max \lambda (A)}\Big)$,   it follows that
\begin{equation*}
\lambda_k w_k=0
\end{equation*}
for all $1 \leq k \leq d$. This implies that $Dw =0$, and so we have $ Ax=UDU^\top  x=UDw=0.$
The proof is done.

\end{proof}

 Using the above lemma, we proceed to prove Theorem \ref{thm-2-3} for Case 2.
\begin{proof}[Proof of Theorem \ref{thm-2-3} for Case 2] For each $\alpha \in \Big( 0, \min_{1 \leq k \leq n} \frac{2n \pi_k}{L_k}\Big)$, we aim to show that
\begin{equation*}
\sup_{w\neq v}\frac{\Vert T_{\alpha}(w)-T_{\alpha}(v)\Vert_{\pi\otimes1_d}}{\Vert w-v \Vert_{\pi\otimes1_d}} < 1\label{eq-1-11}.
\end{equation*}
 We define $L_{\alpha}$ $:$ $\mathbb{R}^{nd}$ $\rightarrow$ $\mathbb{R}^{nd}$ as $L_{\alpha}(x) =\Big(L_\alpha (x)_1^\top , \cdots, L_{\alpha}(x)_n^\top \Big)^\top $ with 
\begin{equation*}
\big(L_{\alpha}(x)\big)_k = \sum_{j=1}^n W_{kj}\Big(I_d - \frac{\alpha A_j}{n\pi_j}\Big)x_j \in \mathbb{R}^d
\end{equation*}
for $1 \leq k \leq n$. Using $\nabla f_j(x) = A_j x + B_j^\top$   we find 
\begin{equation*}
\Big(T_{\alpha}(w) - T_{\alpha}(v)\Big) = L_{\alpha}(w-v).
\end{equation*}
Therefore, the desired inequality \eqref{eq-1-11} is equivalent to 
\begin{equation}\label{eq-3-16}
\sup_{x\neq0}\frac{\|L_{\alpha}(x)\Vert_{\pi\otimes1_d}}{\Vert x\Vert_{\pi\otimes1_d}} < 1.
\end{equation} 
In order to show this inequality, we first show that the term in the left hand side is less or equal to $1$.

For any $w =(w_1^\top, \cdots, w_n^\top)^\top \in \mathbb{R}^{nd}$, we notice that
\begin{equation}\label{eq-3-13}
\begin{split}
\Vert L_{\alpha}(w)\Vert_{\pi\otimes1_d} &= \Big[\sum_{k=1}^n\frac{1}{\pi_k} \Big\Vert \sum_{j=1}^n W_{kj}\Big(w_j-\alpha A_j\frac{w_j}{n\pi_j} \Big)\Big\Vert^2\Big]^{\frac{1}{2}}
\\
&=\| J\Big(w-\alpha\Big(A\Big(\frac{w}{n\pi}\Big)\Big)\Big)\|_{\pi\otimes1_d},
\end{split}
\end{equation}
where we have let $A\Big(\frac{w}{n\pi}\Big) =\Big(\Big(A_1\frac{w_1}{n\pi_1}\Big)^\top , \cdots, \Big(A_n\frac{w_n}{n\pi_n}\Big)^\top \Big)^\top $.

Using  Lemma \ref{lem-3-3} and Lemma \ref{lem-3-4} with the fact that $\alpha \in (0, \frac{2n\pi_j}{\max \lambda (A_j)}]$ for all $1\leq j \leq n$, we have
\begin{align}
\Vert L_{\alpha}(w)\Vert_{\pi\otimes1_d}  &\leq \| w-\alpha\Big(A\Big(\frac{w}{n\pi}\Big)\Big)\|_{\pi\otimes1_d}\label{eq-3-14}
\\
&\leq \| w\|_{\pi\otimes1_d}.\label{eq-3-17}
\end{align}
Therefore we have
\begin{equation}\label{eq-3-11}
\sup_{w\neq 0}\frac{\Vert L_{\alpha}(w)\Vert_{\pi\otimes1_d}}{\Vert w \Vert_{\pi\otimes1_d}} \leq 1.
\end{equation}
Now we aim to show that the above inequality is strict. For this we note that, since $L_{\alpha}$ is linear,  
\begin{equation*}
\sup_{x\neq0}\frac{\Vert L_{\alpha}(x)\Vert_{\pi\otimes1_d}}{\Vert x \Vert_{\pi\otimes1_d}}
=\sup_{x \in \mathbb{S}^{nd-1}}\frac{\Vert L_{\alpha}(x)\Vert_{\pi\otimes1_d}}{\Vert x \Vert_{\pi\otimes1_d}},
\end{equation*}
where $\mathbb{S}^{nd-1} =\{ x \in \mathbb{R}^{nd} : \Vert x \Vert =1 \}$.

In order to show that the inequality \eqref{eq-3-11} is strict, we assume the contrary that there exists a vector $x =(x_1^\top, \cdots, x_n^\top)^\top \in  \mathbb{S}^{nd-1}$ such that
\begin{equation*}
\frac{\Vert L_{\alpha}(x) \Vert_{\pi\otimes1_d}}{\Vert x \Vert_{\pi\otimes1_d}}=1\label{eq-1-12}.
\end{equation*}
Then the inequalities of \eqref{eq-3-14} and \eqref{eq-3-17} with $w=x$ should hold equality.  As for the equality condition of \eqref{eq-3-17}, it follows from  Lemma  \ref{lem-3-4} the following identities
\begin{equation}\label{eq-1-14}
A_jx_j=0  \quad \forall 1 \leq j \leq n.
\end{equation}
Then the equality condition of \eqref{eq-3-14} is written as
\begin{equation*}
\|J(x)\|_{\pi \otimes 1_d} =\|x\|_{\pi \otimes 1_d}.
\end{equation*}
By the equality condition of Lemma \ref{lem-3-3}, there exists a vector $\zeta \in \mathbb{R}^d$ such that for all $1\leq j \leq n$,
\begin{equation}\label{eq-3-15}
x_j=\pi_j \zeta.
\end{equation}
Inserting this form into \eqref{eq-1-14}, we find $A_j(\pi_j \zeta) = 0$, and so $A_j \zeta = 0$. Summing this over $1 \leq j \leq n$, we get
\begin{equation}\label{eq-3-41}
\big(\sum_{j=1}^n A_j\big)\zeta=0. 
\end{equation}
Since the aggregate cost $f$ is assumed to be $\mu$-strongly convex, the matrix $\big(\sum_{j=1}^n A_j\big)$ is positive definite.
Thus, the equality \eqref{eq-3-41} implies that $\zeta=0$. From this and \eqref{eq-3-15} we have $x=0$.  This is a contradiction to the prior assumption that $x \in \mathbb{S}^{nd-1}$. Therefore the equality \eqref{eq-1-12} cannot hold and so \eqref{eq-3-16} holds true, and \eqref{eq-1-11} is verified. 
 
  Let $\alpha_0 = \min_{1 \leq j \leq n}\Big(\frac{2n\pi_j}{\{\max \lambda(A_j)\}+\ep}\Big)$ for a fixed value $\ep>0$. By \eqref{eq-1-11} there exists a value  $\eta_{\ep} \in (0,1)$ such that
\begin{equation*}
\Vert T_{\alpha_0}(w) -T_{\alpha_0}(v)\Vert_{\pi\otimes1_d} \leq \eta_{\ep}\Vert w-v \Vert_{\pi\otimes1_d}.
\end{equation*} 
For any $\alpha \in (0, \alpha_0]$, we find from \eqref{eq-1-21} and \eqref{eq-3-18} the following identity
\begin{equation*}
T_{\alpha}(w) = \frac{\alpha}{\alpha_0}T_{\alpha_0}(w) + \Big(1-\frac{\alpha}{\alpha_0}\Big)J(w).
\end{equation*}
By the triangle inequality and Lemma \ref{lem-3-3}, we obtain an upper bound of $\Vert T_\alpha(w) -T_\alpha(v)\Vert_{\pi\otimes1_d}$ as follows
\begin{equation*}
\begin{split}
\Vert T_{\alpha}(w) -T_{\alpha}(v) \Vert_{\pi\otimes1_d} &\leq \frac{\alpha}{\alpha_0}\Vert T_{\alpha_0}(w) - T_{\alpha_0}(v) \Vert_{\pi\otimes1_d} +\Big( 1 -\frac{\alpha}{\alpha_0}\Big) \Vert J(w-v) \Vert_{\pi\otimes1_d}
\\
&\leq \frac{\alpha}{\alpha_0}\Vert T_{\alpha_0}(w) - T_{\alpha_0}(v) \Vert_{\pi\otimes1_d} +\Big( 1 -\frac{\alpha}{\alpha_0}\Big) \Vert w-v \Vert_{\pi\otimes1_d}
\\
&\leq \frac{\alpha}{\alpha_0}\eta_{\ep}\Vert w-v \Vert_{\pi\otimes1_d} +\Big( 1 -\frac{\alpha}{\alpha_0}\Big) \Vert w-v \Vert_{\pi\otimes1_d}
\\
&= \Big(1-\frac{1-\eta_{\ep}}{\alpha_0}\alpha\Big) \Vert w-v \Vert_{\pi\otimes1_d}.
\end{split}
\end{equation*}
 The proof is done.
\end{proof}

\begin{proof}[Proof of Theorem \ref{thm-2-4}]
By Theorem \ref{thm-2-3}, the mapping $T_\alpha$ is  contractive for any $\alpha \in (0,\alpha_0]$, and so there exist a unique fixed point $w^\alpha \in \mathbb{R}^{nd}$ of the mapping $T_\alpha$. 

In order to prove the uniform bound of $\{w^{\alpha}\}_{\alpha \in (0,\alpha_0]}$, we construct a sequence converging to $w^{\alpha}$. Namely, for a fixed value $\alpha \in (0,\alpha_0]$, we consider the sequence $\{ u(t)\}_{t \geq 0} \subset \mathbb{R}^{nd}$ such that $u (0) = 0 \in \mathbb{R}^{nd}$ and $u (t+1) = T_{\alpha} (u(t))$ for $t \geq 0$.
   We use the contraction property \eqref{eq-2-11} of $T_{\alpha}$ to find
\begin{equation}\label{eq-3-40}
\begin{split}
\Vert u(t+1) - u(t)\Vert_{\pi\otimes1_d} & = \|T_{\alpha}^t (u(1)) - T_{\alpha}^{t} (u(0))\|_{\pi\otimes1_d}
\\
&\leq (1-C\alpha)^t\Vert u(1) - u(0)\Vert_{\pi\otimes1_d}.
\end{split}
\end{equation}
Next we use $w^\alpha = T_{\alpha}(w^\alpha)$ and  \eqref{eq-2-11} to deduce
\begin{equation*}
\begin{split}
\|u(t+1) -w^\alpha\|_{\pi\otimes1_d}  & = \|T_{\alpha} (u(t)) - T_{\alpha} (w^\alpha)\|_{\pi\otimes1_d}
\\
&\leq (1-C\alpha)\|u(t) -w^\alpha\|_{\pi\otimes1_d},
\end{split}
\end{equation*}
and so $u(t)$ converges to $w^\alpha$ as $t \rightarrow \infty$. Combining this with \eqref{eq-3-40}, we get
\begin{equation*}
\begin{split}
\Vert w^\alpha-u(0)\Vert_{\pi\otimes1_d} & \leq \sum_{t=0}^\infty \Vert u(t+1) - u(t)\Vert_{\pi\otimes1_d}
\\
&\leq \sum_{t=0}^\infty (1-C\alpha)^t \Vert u(1) - u(0)\Vert_{\pi\otimes1_d}
\\
&= \frac{1}{C\alpha}\Vert u(1) - u(0)\Vert_{\pi\otimes1_d}.
\end{split}
\end{equation*}
Now we use $u(0)=0\in \mathbb{R}^{nd}$ and $u(1)=\alpha  J(\nabla F(0))$ to find
\begin{equation*}
\Vert w^\alpha\Vert_{\pi\otimes1_d} \leq   \frac{1}{C}\Vert J(\nabla F(0))\Vert_{\pi\otimes1_d}.
\end{equation*}
Therefore the set of the fixed points $\{w^\alpha\}_{\alpha \in (0,\alpha_0]}$ is bounded. 
The proof is done.
\end{proof} 

\section{Convergence of the gradient-push algorithm to the fixed point   of $T_{\alpha}$}\label{sec-4}
In this section, we give the proof  of Theorem \ref{thm-2-5} regarding the convergence property of the gradient-push algorithm \eqref{eq-1-4} towards the fixed point $w^\alpha$ of the mapping $T_{\alpha}$. For this, we make use of the contraction property of the mapping $T_{\alpha}$ obtained in Theorem \ref{thm-2-3} along with a  bound of mapping $P_{t}(w)$ of \eqref{eq-1-5} attained in the following lemma.
\begin{lem}\label{lem-4-1}
The mapping $P_{t}(w)$ defined in \eqref{eq-1-5} satisfies the following inequality 
\begin{equation*}
\Vert P_t(w) \Vert_{\pi\otimes 1_d} \leq b\rho^t \Vert w \Vert_{\pi\otimes 1_d} \quad \forall~w \in \mathbb{R}^{nd} \quad \textrm{and}\quad t \geq 0,
\end{equation*}
where we have set $b= aL$ with constant   $a>0$  defined in \eqref{eq-1-6}.
\end{lem}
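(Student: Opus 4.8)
The plan is to exploit the fact that $P_t$ is built from the averaging operator $J$ of \eqref{eq-3-18}, so that the contraction bound of Lemma \ref{lem-3-3} reduces the problem to a purely entrywise estimate of gradient differences. First I would introduce the auxiliary map $h_t : \mathbb{R}^{nd} \to \mathbb{R}^{nd}$ whose $j$-th block is
\[
h_t(w)_j = \nabla f_j\Big(\frac{w_j}{n\pi_j}\Big) - \nabla f_j\Big(\frac{w_j}{y_j(t)}\Big),
\]
and observe directly by comparing \eqref{eq-1-5} with \eqref{eq-3-18} that $P_t(w) = J\big(h_t(w)\big)$. Applying Lemma \ref{lem-3-3} then gives $\Vert P_t(w)\Vert_{\pi\otimes 1_d} = \Vert J(h_t(w))\Vert_{\pi\otimes 1_d} \leq \Vert h_t(w)\Vert_{\pi\otimes 1_d}$, so it suffices to bound $\Vert h_t(w)\Vert_{\pi\otimes 1_d}$.

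Next I would estimate each block of $h_t(w)$ separately. Using the $L_j$-smoothness of $f_j$ followed by the convergence estimate \eqref{eq-1-6} for $1/y_j(t)$, I obtain
\[
\Vert h_t(w)_j\Vert \leq L_j \Big\Vert \frac{w_j}{n\pi_j} - \frac{w_j}{y_j(t)}\Big\Vert = L_j\Big|\frac{1}{n\pi_j} - \frac{1}{y_j(t)}\Big|\,\Vert w_j\Vert \leq L_j\, a\rho^t \Vert w_j\Vert \leq b\rho^t \Vert w_j\Vert,
\]
where the last two steps use $L_j \leq L$ and $b = aL$. Squaring, weighting by $1/\pi_j$, and summing over $1\leq j \leq n$ yields $\Vert h_t(w)\Vert_{\pi\otimes 1_d} \leq b\rho^t \Vert w\Vert_{\pi\otimes 1_d}$, and combining this with the reduction of the first paragraph finishes the proof.

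There is no serious obstacle here: the lemma is essentially an immediate consequence of Lemma \ref{lem-3-3} together with smoothness and the decay estimate \eqref{eq-1-6}. The only point deserving attention is the clean recognition that $P_t = J\circ h_t$, which lets Lemma \ref{lem-3-3} absorb the column-stochastic averaging so that no separate Cauchy--Schwarz argument over the weights $W_{kj}$ is required. I would also note that the quantity $\frac{1}{n\pi_j} - \frac{1}{y_j(t)}$ in \eqref{eq-1-6} is a scalar, so its norm is simply an absolute value and factors out of $\Vert w_j\Vert$, which is what makes the per-block bound align exactly with the weighted $(\pi\otimes 1_d)$-norm.
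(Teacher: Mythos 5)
Your proposal is correct and follows essentially the same route as the paper: the paper's proof likewise applies Lemma \ref{lem-3-3} to absorb the column-stochastic averaging (i.e., recognizes $P_t = J \circ h_t$, written out in squared-sum form), then bounds each block via $L$-smoothness and the decay estimate \eqref{eq-1-6}. Your explicit factoring through the auxiliary map $h_t$ is only an organizational difference, not a mathematical one.
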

\begin{proof}
Using Lemma \ref{lem-3-3} and the $L$-smooth property of $f_j$, we deduce
\begin{equation*}
\begin{split}
\Vert P_t(w) \Vert_{\pi\otimes 1_d} ^2 &= \sum_{k=1}^n \frac{1}{\pi_k}\Big\Vert \sum_{j=1}^n W_{kj}\Big(\nabla f_j\Big(\frac{w_j}{n\pi_j}\Big) - \nabla f_j\Big(\frac{w_j}{y_j(t)}\Big)\Big) \Big\Vert^2 
\\
&\leq  \sum_{k=1}^n \frac{1}{\pi_k} \Big\Vert \nabla f_k\Big(\frac{w_k}{n\pi_k}\Big) - \nabla f_k\Big(\frac{w_k}{y_k(t)}\Big) \Big\Vert^2 
\\
&\leq L^2 \sum_{k=1}^n \frac{1}{\pi_k}\Big(\frac{1}{y_k(t)} -\frac{1}{n\pi_k} \Big)^2 \big\Vert w_k \big\Vert^2 
\\
&\leq a^2 L^2 \rho^{2t} \sum_{k=1}^n\frac{1}{\pi_k}\big\Vert w_k \big\Vert^2 = b^2\rho^{2t} \Vert w \Vert_{\pi\otimes 1_d}^2.  
\end{split}
\end{equation*}
The proof is done.
\end{proof}

In the following lemma, we give a bound of $V_{\alpha}$ defined in \eqref{eq-1-9}.
\begin{lem} \label{lem-4-2}
For $\alpha \in (0,\alpha_0]$, we have
\begin{equation*}
V_{\alpha} \leq \exp\Big(\frac{\alpha_0b}{1-C\alpha_0}\frac{1}{1-\rho}\Big).
\end{equation*}
\end{lem}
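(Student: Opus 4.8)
The plan is to bound the infinite product defining $V_\alpha$ by passing to logarithms and invoking the elementary inequality $\log(1+x)\le x$, valid for every $x\ge 0$. First I would note that, since $1-C\alpha>0$ for all $\alpha\in(0,\alpha_0]$ (a point I verify below), each factor in \eqref{eq-1-9} is a real number greater than $1$, so the logarithm of the product equals the sum of the logarithms and
\[
\log V_\alpha=\sum_{j=0}^\infty \log\Big(1+\frac{\alpha b\rho^j}{1-C\alpha}\Big)\le \sum_{j=0}^\infty \frac{\alpha b\rho^j}{1-C\alpha}=\frac{\alpha b}{1-C\alpha}\sum_{j=0}^\infty \rho^j.
\]
Because $\rho\in(0,1)$, the geometric series sums to $1/(1-\rho)$, giving $\log V_\alpha\le \frac{\alpha b}{(1-C\alpha)(1-\rho)}$; in particular this already shows $V_\alpha<\infty$, as promised after Theorem \ref{thm-2-5}.

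Next I would use monotonicity in $\alpha$. The function $\alpha\mapsto \frac{\alpha}{1-C\alpha}$ has derivative $\frac{1}{(1-C\alpha)^2}>0$ on the range where $1-C\alpha>0$, hence is increasing there, so that $\frac{\alpha}{1-C\alpha}\le \frac{\alpha_0}{1-C\alpha_0}$ for every $\alpha\in(0,\alpha_0]$. Substituting this into the previous bound and exponentiating yields exactly
\[
V_\alpha\le \exp\Big(\frac{\alpha_0 b}{(1-C\alpha_0)(1-\rho)}\Big),
\]
which is the asserted estimate.

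The only step requiring care, and the one I would settle first, is the strict positivity $1-C\alpha>0$ on $(0,\alpha_0]$, since both the convergence of the product and the monotonicity argument rely on it; it suffices to check $C\alpha_0<1$, because then $1-C\alpha\ge 1-C\alpha_0>0$. For Case 2 this is immediate from $C=\frac{1-\eta_{\ep}}{\alpha_0}$, giving $C\alpha_0=1-\eta_{\ep}\in(0,1)$. For Case 1, taking an index $k^\ast$ attaining the minimum in the definition of $C$ and using $\alpha_0\le \frac{2n\pi_{k^\ast}}{L_{k^\ast}+\mu_{k^\ast}}$ leads to $C\alpha_0\le \frac{2\mu_{k^\ast}L_{k^\ast}}{(\mu_{k^\ast}+L_{k^\ast})^2}\le \tfrac12$ by the AM--GM inequality $(\mu_{k^\ast}+L_{k^\ast})^2\ge 4\mu_{k^\ast}L_{k^\ast}$. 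Hence $C\alpha_0<1$ in both cases, and no genuine obstacle remains beyond this bookkeeping.
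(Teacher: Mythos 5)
Your proof is correct and follows essentially the same route as the paper's: bound the product via $\log(1+x)\le x$ and the geometric series, and use the monotonicity of $\alpha\mapsto \alpha/(1-C\alpha)$ to replace $\alpha$ by $\alpha_0$ (the paper simply performs these two steps in the opposite order, passing to $\alpha_0$ inside the product first and then taking logarithms). Your explicit verification that $C\alpha_0<1$ in both cases — via $C\alpha_0=1-\eta_{\ep}$ in Case 2 and the AM--GM bound $C\alpha_0\le \tfrac12$ in Case 1 — is a point the paper leaves implicit, and it is a worthwhile addition since both the convergence of the product and the monotonicity step depend on it.
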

\begin{proof}
 For any $\alpha \in (0,\alpha_0]$, the following inequality holds
\begin{equation*}  
V_{\alpha}  = \prod_{k=0}^{\infty}\Big(1 +\frac{\alpha b \rho^k}{1-C\alpha}\Big)  \leq \prod_{k=0}^{\infty}\Big(1 +\frac{\alpha_0 b \rho^k}{1-C\alpha_0}\Big). 
\end{equation*}
Taking logarithm and using the fact that $\log(1+x) \leq x$, we get
\begin{equation*}
\begin{split}
\log \Big[\prod_{k=0}^{\infty}\Big(1 +\frac{\alpha_0 b \rho^k}{1-C\alpha_0}\Big)\Big]&= \sum_{k=0}^{\infty} \log\Big(1 + \frac{\alpha_0b\rho^k}{1-C\alpha_0}\Big) 
\\
&\leq \sum_{k=0}^{\infty} \frac{\alpha_0b}{1-C\alpha_0}\rho^k
\\
&=\frac{\alpha_0b}{(1-C\alpha_0)(1-\rho)}.
\end{split}
\end{equation*}
The proof is done.
\end{proof}
 Now we are ready to prove   Theorem \ref{thm-2-5}.
\begin{proof}[Proof of Theorem \ref{thm-2-5}]
Using the triangle inequality and \eqref{eq-1-4} with the fact that $w^\alpha$ is the fixed point of the mapping $T_{\alpha}$, we get
\begin{equation}\label{eq-4-1}
\begin{split}
\Vert w(t+1) - w^\alpha \Vert_{\pi\otimes 1_d} &= \Vert T_{\alpha}(w(t)) - T_{\alpha}(w^\alpha) + \alpha P(w(t)) \Vert_{\pi\otimes 1_d}
\\
&\leq \Vert T_{\alpha}(w(t)) - T_{\alpha}(w^\alpha) \Vert_{\pi\otimes 1_d} + \alpha \Vert P(w(t))  \Vert_{\pi\otimes 1_d}.
\end{split}
\end{equation}
Applying the contraction property of $T_\alpha$ established in  Theorem \ref{thm-2-3} and using \mbox{Lemma \ref{lem-4-1}}, we obtain
\begin{equation}\label{eq-4-2}
\begin{split}
\Vert w(t+1)-w^\alpha\Vert_{\pi\otimes 1_d} &\leq (1-C\alpha)\Vert w(t)-w^\alpha\Vert_{\pi\otimes 1_d} +\alpha b\rho^t\Vert w(t)\Vert_{\pi\otimes 1_d}
\\
&= (1-C\alpha)\Vert w(t)-w^\alpha\Vert_{\pi\otimes 1_d} +\alpha b\rho^t\Vert w(t)-w^\alpha+w^\alpha\Vert_{\pi\otimes 1_d}.
\end{split}
\end{equation}
We use the triangle inequality and the bound of $\|w^\alpha\|_{\pi \otimes 1_d}$ obtained in Theorem \ref{thm-2-4} to find
\begin{equation*}
\begin{split}
\Vert w(t+1)-w^\alpha\Vert_{\pi\otimes 1_d}&\leq (1-C\alpha +\alpha b\rho^t)\Vert w(t)-w^\alpha\Vert_{\pi\otimes 1_d} +\alpha b\rho^t\Vert w^\alpha\Vert_{\pi\otimes 1_d} 
\\
&\leq (1-C\alpha +\alpha b\rho^t)\Vert w(t)-w^\alpha\Vert_{\pi\otimes 1_d} +\alpha bR\rho^t.
\end{split}
\end{equation*}
Using the above inequality recursively, we get
\begin{equation} \label{eq-4-3}
\begin{split}
\Vert w(t+1)-w^\alpha\Vert_{\pi\otimes 1_d} &\leq \big[\underbrace{\prod_{i=0}^t(1-C\alpha +\alpha b\rho^i)}_{T_1}\big]\Vert w(0)-w^\alpha\Vert_{\pi\otimes 1_d} 
\\
& \quad+\alpha bR\underbrace{\sum_{i=0}^{t-1}\big[\rho^i \prod_{k=i+1}^t(1-C\alpha +\alpha b\rho^k)}_{T_2}\big] +\alpha bR\rho^t.
\end{split}
\end{equation}
In view of the definition \eqref{eq-1-9} of $V_{\alpha}$, we have 
\begin{equation}\label{eq-4-9}
T_1 = (1-C\alpha)^{t+1}\prod_{i=0}^t\Big(1+ \frac{\alpha b\rho^i}{1-C\alpha}\Big)\leq(1-C\alpha)^{t+1}V_{\alpha}.
\end{equation}
Next we estimate $T_2$. Notice that
\begin{equation*} 
\begin{split} 
\prod_{k=i+1}^t(1-C\alpha +\alpha b\rho^k) &=(1-C\alpha)^{t-i} \prod_{k=i+1}^{t}\Big(1+ \frac{\alpha b\rho^k}{1-C\alpha}\Big)
\\
&\leq(1-C\alpha)^{t-i} \prod_{k=0}^{\infty}\Big(1+ \frac{\alpha b\rho^k}{1-C\alpha}\Big)
\\
& = V_\alpha(1-C\alpha)^{t-i}.
\end{split}
\end{equation*} 
Using this inequality, we estimate $T_2$ as follows:
\begin{equation}\label{eq-4-10} 
\begin{split}
T_2 &\leq V_\alpha\sum_{i=0}^{t-1}\rho^i (1-C\alpha)^{t-i}
=\left\{\begin{array}{ll}  tV_\alpha \rho^t&~\textrm{if}~1-C\alpha = \rho
\\
\frac{V_\alpha(1-C\alpha)}{1-C\alpha-\rho}\Big[(1-C\alpha)^t-\rho^t\Big]&~\textrm{if}~1-C\alpha \neq \rho.
\end{array}
\right.
\end{split}
\end{equation} 
Putting \eqref{eq-4-9} and \eqref{eq-4-10} in  \eqref{eq-4-3}, we get
\begin{equation*}
\Vert w(t+1)-w^\alpha\Vert_{\pi\otimes 1_d} \leq V_\alpha (1-C\alpha)^{t+1}\Vert w(0)-w^\alpha\Vert_{\pi\otimes 1_d} +\mathcal{R}(t),
\end{equation*} 
where 
\begin{equation*}
\mathcal{R}(t) =\bigg\{ \begin{array}{ll}tV_\alpha\alpha bR\rho^t +\alpha bR\rho^t &\textrm{if}~  1-C\alpha=\rho
\\
 \frac{\alpha bRV_\alpha(1-C\alpha)}{1-C\alpha-\rho}\Big[(1-C\alpha)^t-\rho^t\Big] +\alpha bR\rho^t &\textrm{if}~ 1-C\alpha \neq \rho.
 \end{array}
\end{equation*} 
The proof is completed.
\end{proof}

\section{Estimate for the distance between $w^\alpha$ and $x_*$}\label{sec-5}

In the previous section, we studied the convergence property of the sequence $w (t)$ of the algorithm \eqref{eq-1-3} towards the unique fixed point $w^\alpha$ of $T_\alpha$.
This section is devoted to estimate the distance between the fixed point $w^\alpha$ and the minimizer $x_*$ of the aggregate cost $f$. Namely, we prove the result of Theorem \ref{thm-2-6}. Also we give the proof of Theorem \ref{thm-1-2} at the end of this section.

For our purpose, we consider the average $\bar{w}^\alpha $ of $w^\alpha=((w^\alpha_1)^\top, \cdots, (w^\alpha_n)^\top)^\top \in \mathbb{R}^{nd}$ defined by
\begin{equation*}
\bar{w}^\alpha=\frac{1}{n}\sum_{k=1}^nw_k^\alpha,
\end{equation*}
and note that
\begin{equation*}
W^\infty = \pi 1_n^\top
\end{equation*}
which leads to
\begin{equation} \label{eq-5-1}
(W^{\infty} \otimes I_d)w^\alpha=(\pi 1_n^\top  \otimes I_d)w^\alpha = n\pi \otimes \bar{w}^\alpha.
\end{equation}
We will find several estimates regarding the two quantities $\Vert w^\alpha-n\pi \otimes \bar{w}^\alpha \Vert_{\pi\otimes 1_d}$ and $\Vert \bar{w}^\alpha- x_* \Vert$ to prove Theorem \ref{thm-2-6}. In what follows, we consider the two cases and the value $\alpha_0$ defined in Theorem \ref{thm-2-3}.
\begin{lem} \label{lem-5-1}For $\alpha \in (0,\alpha_0]$, the fixed point $w^\alpha$ of $T_{\alpha}$ satisfies
\begin{equation}\label{eq-5-10}
\Vert w^\alpha-n\pi \otimes x_* \Vert_{\pi\otimes 1_d} \leq \Vert w^\alpha-n\pi \otimes \bar{w}^\alpha \Vert_{\pi\otimes 1_d} +n \Vert \bar{w}^\alpha- x_* \Vert.
\end{equation}
\end{lem}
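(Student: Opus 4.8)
The plan is to pass through the intermediate point $n\pi \otimes \bar{w}^\alpha$ and invoke the triangle inequality for the norm $\Vert \cdot \Vert_{\pi \otimes 1_d}$. Writing
\[
w^\alpha - n\pi \otimes x_* = \big(w^\alpha - n\pi \otimes \bar{w}^\alpha\big) + \big(n\pi \otimes \bar{w}^\alpha - n\pi \otimes x_*\big),
\]
the triangle inequality will immediately yield
\[
\Vert w^\alpha - n\pi \otimes x_* \Vert_{\pi \otimes 1_d} \leq \Vert w^\alpha - n\pi \otimes \bar{w}^\alpha \Vert_{\pi \otimes 1_d} + \Vert n\pi \otimes (\bar{w}^\alpha - x_*) \Vert_{\pi \otimes 1_d},
\]
where I have used the bilinearity of the Kronecker product to rewrite the second difference as $n\pi \otimes (\bar{w}^\alpha - x_*)$. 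Thus the only thing left to verify is that this last term equals $n \Vert \bar{w}^\alpha - x_*\Vert$.

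To see this I would simply unfold the definition of $\Vert \cdot \Vert_{\pi \otimes 1_d}$. The $j$-th block of $n\pi \otimes (\bar{w}^\alpha - x_*)$ is $n\pi_j(\bar{w}^\alpha - x_*) \in \mathbb{R}^d$, so
\[
\Vert n\pi \otimes (\bar{w}^\alpha - x_*)\Vert_{\pi \otimes 1_d}^2 = \sum_{j=1}^n \frac{1}{\pi_j} \Vert n\pi_j (\bar{w}^\alpha - x_*)\Vert^2 = n^2 \Vert \bar{w}^\alpha - x_*\Vert^2 \sum_{j=1}^n \pi_j.
\]
Since $\sum_{j=1}^n \pi_j = 1$, the right-hand side collapses to $n^2 \Vert \bar{w}^\alpha - x_*\Vert^2$, and taking square roots gives the desired term $n\Vert \bar{w}^\alpha - x_*\Vert$. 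Combining this with the triangle inequality above produces the stated estimate \eqref{eq-5-10}.

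There is no genuine obstacle here: the result is an elementary decomposition, and the one point worth tracking is the cancellation between the weight $1/\pi_j$ built into the norm and the factor $\pi_j^2$ arising from the block $n\pi_j(\bar{w}^\alpha - x_*)$, which reduces to $\sum_{j=1}^n \pi_j = 1$. The analytic substance of the section resides in the lemmas that follow, which bound the two terms $\Vert w^\alpha - n\pi \otimes \bar{w}^\alpha\Vert_{\pi \otimes 1_d}$ and $\Vert \bar{w}^\alpha - x_*\Vert$ separately; the present lemma merely records the splitting that organizes those estimates.
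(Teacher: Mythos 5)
Your proof is correct and follows exactly the same route as the paper: a triangle inequality through the intermediate point $n\pi \otimes \bar{w}^\alpha$, followed by the direct computation $\Vert n\pi \otimes (\bar{w}^\alpha - x_*)\Vert_{\pi\otimes 1_d}^2 = n^2\Vert \bar{w}^\alpha - x_*\Vert^2$ using $\sum_{j=1}^n \pi_j = 1$. Nothing is missing.
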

\begin{proof}
We apply the triangle inequality to find
\begin{equation} \label{eq-5-2}
\Vert w^\alpha-n\pi \otimes x_* \Vert_{\pi\otimes 1_d} \leq \Vert w^\alpha-n\pi \otimes \bar{w}^\alpha \Vert_{\pi\otimes 1_d} + \Vert n\pi \otimes (\bar{w}^\alpha-x_*) \Vert_{\pi\otimes 1_d}.
\end{equation}
Using the stochastic property of $\pi$, we get 
\begin{equation*}
\Vert n\pi \otimes (\bar{w}^\alpha-x_*) \Vert_{\pi\otimes 1_d}^2 = \sum_{j=1}^n \frac{1}{\pi_j}(n \pi_j)^2 \Vert \bar{w}^\alpha-x_*\Vert^2=n^2 \Vert \bar{w}^\alpha- x_* \Vert^2.
\end{equation*}
Combining this with (\ref{eq-5-2}) gives
\begin{equation*}
\Vert w^\alpha-n\pi \otimes x_* \Vert_{\pi\otimes 1_d} \leq \Vert w^\alpha-n\pi \otimes \bar{w}^\alpha \Vert_{\pi\otimes 1_d} +n \Vert \bar{w}^\alpha- x_* \Vert.
\end{equation*}
The proof is done.
\end{proof} 
 We state a basic lemma.
\begin{lem} \label{lem-5-2}
For any $n \times n$ matrix $A$, we have
\begin{equation*}
||| A \otimes I_d |||_{\pi\otimes 1_d} =||| A |||_\pi .
\end{equation*}
\end{lem}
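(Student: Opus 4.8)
The plan is to reduce the weighted matrix norm on $\mathbb{R}^{nd}$ to the one on $\mathbb{R}^n$ by viewing a block vector as a matrix and splitting the norm over columns. I would identify $w=(w_1^\top,\dots,w_n^\top)^\top\in\mathbb{R}^{nd}$ (with each $w_j\in\mathbb{R}^d$) with the $n\times d$ matrix $W$ whose $j$-th row is $w_j^\top$. Under this identification the block action of $A\otimes I_d$ becomes left multiplication by $A$: the $k$-th block of $(A\otimes I_d)w$ is $\sum_{j=1}^n A_{kj}w_j$, which is exactly the transpose of the $k$-th row of $AW$. Writing $W_{:,\ell}\in\mathbb{R}^n$ for the $\ell$-th column of $W$, a direct expansion of the definitions of the two weighted norms gives the column-wise splittings $\|w\|_{\pi\otimes1_d}^2=\sum_{\ell=1}^d\|W_{:,\ell}\|_\pi^2$ and $\|(A\otimes I_d)w\|_{\pi\otimes1_d}^2=\sum_{\ell=1}^d\|A\,W_{:,\ell}\|_\pi^2$, since both sums merely regroup $\sum_k \frac{1}{\pi_k}\|\cdot\|^2$ over the $d$ coordinates of each block.

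With these identities the upper bound is immediate. For each $\ell$ the definition of the induced norm gives $\|A\,W_{:,\ell}\|_\pi\le|||A|||_\pi\,\|W_{:,\ell}\|_\pi$; summing the squares over $\ell$ yields
\begin{equation*}
\|(A\otimes I_d)w\|_{\pi\otimes1_d}^2\le|||A|||_\pi^2\sum_{\ell=1}^d\|W_{:,\ell}\|_\pi^2=|||A|||_\pi^2\,\|w\|_{\pi\otimes1_d}^2,
\end{equation*}
so that $|||A\otimes I_d|||_{\pi\otimes1_d}\le|||A|||_\pi$.

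For the reverse inequality I would test against rank-one block vectors. Given $x\in\mathbb{R}^n$ and a unit vector $e\in\mathbb{R}^d$, set $w_j=x_je$, i.e.\ $W=xe^\top$. Then $(A\otimes I_d)w$ corresponds to $(Ax)e^\top$, and the column splittings above give $\|w\|_{\pi\otimes1_d}=\|x\|_\pi$ and $\|(A\otimes I_d)w\|_{\pi\otimes1_d}=\|Ax\|_\pi$ (the factor $\|e\|=1$ absorbs the column sums). Hence the Rayleigh quotient of $A\otimes I_d$ at $w$ equals $\|Ax\|_\pi/\|x\|_\pi$, and taking the supremum over $x\neq0$ shows $|||A\otimes I_d|||_{\pi\otimes1_d}\ge|||A|||_\pi$. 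Combining the two inequalities proves the claimed equality.

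The computation is entirely elementary; the only point requiring care is the bookkeeping of the Kronecker action $A\otimes I_d$ on stacked vectors and the verification that the $(\pi\otimes1_d)$-norm decouples across the $d$ columns. As an alternative I could introduce $D=\mathrm{diag}(\pi_1,\dots,\pi_n)$, note $\|\cdot\|_\pi=\|D^{-1/2}\,\cdot\,\|_2$ and $\|\cdot\|_{\pi\otimes1_d}=\|(D^{-1/2}\otimes I_d)\,\cdot\,\|_2$, so that the two induced norms equal the spectral norm of $(D^{-1/2}AD^{1/2})\otimes I_d$ and of $D^{-1/2}AD^{1/2}$ respectively; these coincide because the singular values of $M\otimes I_d$ are those of $M$ each repeated $d$ times. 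I would nonetheless present the column-splitting argument as the main proof, since it stays within the elementary framework used elsewhere in the paper and avoids invoking Kronecker singular-value facts.
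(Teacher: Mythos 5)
Your proof is correct, but there is nothing in the paper to compare it against: the paper states this result as ``a basic lemma'' and gives no proof at all, so your argument fills a gap the authors left to the reader. Both of your routes are sound. The column-splitting argument is clean: identifying $w$ with the $n\times d$ matrix $W$, the identities $\|w\|_{\pi\otimes 1_d}^2=\sum_{\ell=1}^d\|W_{:,\ell}\|_\pi^2$ and $\|(A\otimes I_d)w\|_{\pi\otimes 1_d}^2=\sum_{\ell=1}^d\|AW_{:,\ell}\|_\pi^2$ are exactly right under the paper's norm conventions, the term-by-term bound gives $|||A\otimes I_d|||_{\pi\otimes 1_d}\le |||A|||_\pi$, and testing on $W=xe^\top$ with $\|e\|=1$ gives the reverse inequality, since then $\|w\|_{\pi\otimes 1_d}=\|x\|_\pi$ and $\|(A\otimes I_d)w\|_{\pi\otimes 1_d}=\|Ax\|_\pi$. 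Your alternative via the similarity $D^{-1/2}AD^{1/2}$ and the fact that the singular values of $M\otimes I_d$ are those of $M$ repeated $d$ times is equally valid and arguably shorter, though it imports a Kronecker-product spectral fact; the column-splitting proof is the more self-contained choice and matches the elementary style of the rest of the paper (e.g.\ the computations in Lemma \ref{lem-3-3} and Lemma \ref{lem-5-4}).
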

In the following lemma, we obtain an estimate of $\| \bar{w}^{\alpha} -x_*\|$ using the strong convexity of the aggregate cost $f$. 
\begin{lem} \label{lem-5-3}For $\alpha \in (0,\alpha_0]$, the fixed point $w^\alpha$ of $T_{\alpha}$ satisfies
\begin{equation}\label{eq-5-9}
\| \bar{w}^{\alpha} -x_*\| \leq  \frac{L}{\gamma}\sum_{i=1}^n \Big\| \bar{w}^{\alpha}- \frac{w_i^{\alpha}}{n\pi_i}\Big\|
\end{equation}
where $\gamma = \frac{\mu \bar{L}}{\mu+\bar{L}}$ with $\bar{L} =\frac{1}{n} \sum_{k=1}^n L_k$.
\end{lem}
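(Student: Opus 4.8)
The plan is to exploit the fixed-point equation $w^\alpha = T_\alpha(w^\alpha)$ to extract a \emph{stationarity identity} for the aggregated gradient, and then to combine it with the strong convexity of the aggregate cost $f$. First I would write out the fixed-point relation blockwise using \eqref{eq-1-21}, namely $w_k^\alpha = \sum_{j=1}^n W_{kj}(w_j^\alpha - \alpha \nabla f_j(w_j^\alpha/(n\pi_j)))$ for $1\le k\le n$, and sum these $n$ identities over $k$. Since $W$ is column stochastic by Assumption \ref{ass-2} (that is, $\sum_k W_{kj}=1$, see \eqref{eq-1-8}), the mixing weights drop out and the terms $\sum_k w_k^\alpha$ on both sides cancel, leaving the key identity $\sum_{j=1}^n \nabla f_j(w_j^\alpha/(n\pi_j)) = 0$ (here $\alpha>0$ is used to divide out). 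I expect this cancellation to be the conceptual heart of the argument.

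Next I would bring in the minimizer. Since $x_*$ minimizes $f=\frac1n\sum_{k} f_k$, the first-order condition gives $\nabla f(x_*) = \frac1n\sum_j \nabla f_j(x_*) = 0$. The idea is to control $\|\bar w^\alpha - x_*\|$ through $\|\nabla f(\bar w^\alpha)\|$. Because $f$ is $\mu$-strongly convex and $\bar L$-smooth (the latter since each $f_k$ is $L_k$-smooth, so $f$ is $\frac1n\sum_k L_k=\bar L$-smooth), I would apply Lemma \ref{lem-3-1} with $g=f$ at the points $\bar w^\alpha$ and $x_*$; discarding the nonnegative gradient term and using $\nabla f(x_*)=0$ yields $\langle \bar w^\alpha - x_*,\, \nabla f(\bar w^\alpha)\rangle \ge \gamma\|\bar w^\alpha - x_*\|^2$ with $\gamma=\frac{\mu\bar L}{\mu+\bar L}$, whence by the Cauchy--Schwarz inequality $\gamma\|\bar w^\alpha - x_*\| \le \|\nabla f(\bar w^\alpha)\|$.

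Finally I would estimate $\|\nabla f(\bar w^\alpha)\|$ using the stationarity identity. Writing $\nabla f(\bar w^\alpha) = \frac1n\sum_j \nabla f_j(\bar w^\alpha)$ and subtracting the vanishing quantity $\frac1n\sum_j \nabla f_j(w_j^\alpha/(n\pi_j))$, the triangle inequality together with $L_k$-smoothness (bounded by $L=\max_k L_k$) gives $\|\nabla f(\bar w^\alpha)\| \le \frac{1}{n}\sum_j L_j\|\bar w^\alpha - w_j^\alpha/(n\pi_j)\| \le \frac{L}{n}\sum_j \|\bar w^\alpha - w_j^\alpha/(n\pi_j)\|$. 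Combining this with the previous bound and discarding the harmless factor $1/n\le 1$ produces the claimed estimate $\|\bar w^\alpha - x_*\| \le \frac{L}{\gamma}\sum_i \|\bar w^\alpha - w_i^\alpha/(n\pi_i)\|$. The only genuine subtlety is the first step, where column stochasticity makes the aggregated stationarity identity appear; everything afterwards is routine strong-convexity/smoothness bookkeeping, and the argument applies verbatim to both Case 1 and Case 2, since it uses only the $L_k$-smoothness of the $f_k$ and the strong convexity of the aggregate $f$.
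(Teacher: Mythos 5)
Your proposal is correct and follows essentially the same route as the paper's proof: both derive the stationarity identity $\sum_{j=1}^n \nabla f_j\big(w_j^\alpha/(n\pi_j)\big) = 0$ from the fixed-point equation together with column stochasticity of $W$, and then play the strong convexity of the aggregate $f$ against the $L$-smoothness of the $f_j$ after adding and subtracting the gradients evaluated at $\bar{w}^\alpha$. The only difference is in the middle step and is cosmetic: the paper deploys strong convexity through a stepsize-dependent gradient-step contraction (in the spirit of Lemma \ref{lem-3-2}) and cancels $\alpha$ at the end, whereas you use Lemma \ref{lem-3-1} plus Cauchy--Schwarz to get the stepsize-free bound $\gamma\|\bar{w}^\alpha - x_*\| \leq \|\nabla f(\bar{w}^\alpha)\|$, which in fact yields the slightly sharper constant $L/(\gamma n)$ before you discard the harmless factor $1/n$.
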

\begin{proof}The fixed point equation $w^\alpha = T_{\alpha} (w^\alpha)$ can be written as
 \begin{equation*}
 w_k^{\alpha} = \sum_{j=1}^n W_{kj}\Big( w_{j}^{\alpha} - \alpha \nabla f_j \Big( \frac{w_j^{\alpha}}{n\pi_j}\Big)\Big).
 \end{equation*}
Averaging this with the fact that $W$ is column-stochastic, we get
\begin{equation*}
\bar{w}^{\alpha} = \bar{w}^{\alpha} - \frac{\alpha}{n} \sum_{i=1}^n \nabla f_i \Big( \frac{w_i^{\alpha}}{n\pi_i}\Big).
\end{equation*}
We manipulate this as follows
\begin{equation*}
\bar{w}^{\alpha} -x_* = \bar{w}^{\alpha} -x_* - \frac{\alpha}{n}    \sum_{i=1}^n \nabla f_i \big( \bar{w}^{\alpha}\big) + \frac{\alpha}{n} \sum_{i=1}^n \Big( \nabla f_i \big( \bar{w}^{\alpha}\big) - \nabla f_i \Big( \frac{w_i^{\alpha}}{n\pi_i}\Big) \Big).
\end{equation*}
Using the triangle inequality with Lemma \ref{lem-3-30}, we have
\begin{equation*}
\| \bar{w}^{\alpha} -x_*\| \leq \Big( 1- \frac{\gamma \alpha}{n}\Big) \| \bar{w}^{\alpha} -x_* \| + \frac{L \alpha}{n} \sum_{i=1}^n \Big\| \bar{w}^{\alpha}- \frac{w_i^{\alpha}}{n\pi_i}\Big\|,
\end{equation*}
which gives
\begin{equation*}
\| \bar{w}^{\alpha} -x_*\| \leq  \frac{L}{\gamma} \sum_{i=1}^n \Big\| \bar{w}^{\alpha}- \frac{w_i^{\alpha}}{n\pi_i}\Big\|.
\end{equation*}
The proof is done.
\end{proof}
We further estimate the right hand side of the inequality \eqref{eq-5-9} in the following lemma.
\begin{lem}\label{lem-5-4}For $\alpha \in (0,\alpha_0]$, the fixed point $w^\alpha$ of $T_{\alpha}$ satisfies
\begin{equation*}
\sum_{i=1}^n \Big\| \bar{w}^{\alpha}- \frac{w_i^{\alpha}}{n\pi_i}\Big\| \leq \frac{\sqrt{\sum_{k=1}^n\frac{1}{\pi_k}}}{n}\Vert w^\alpha-(W^{\infty} \otimes I_d)w^\alpha\Vert_{\pi\otimes 1_d}.
\end{equation*}
\end{lem}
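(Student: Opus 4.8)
The plan is to reduce the claimed inequality to a weighted Cauchy--Schwarz inequality after expressing both sides in terms of the block-wise deviations $w_i^\alpha - n\pi_i \bar{w}^\alpha$.

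First I would invoke the identity \eqref{eq-5-1}, which gives $(W^{\infty} \otimes I_d)w^\alpha = n\pi \otimes \bar{w}^\alpha$. Reading off blocks, the $i$-th component of $w^\alpha - (W^{\infty} \otimes I_d)w^\alpha$ is precisely $w_i^\alpha - n\pi_i \bar{w}^\alpha$, so by the definition of the $\pi\otimes 1_d$-norm,
\begin{equation*}
\Vert w^\alpha-(W^{\infty} \otimes I_d)w^\alpha\Vert_{\pi\otimes 1_d}^2 = \sum_{i=1}^n \frac{1}{\pi_i}\big\Vert w_i^\alpha - n\pi_i \bar{w}^\alpha\big\Vert^2.
\end{equation*}

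Next I would rewrite the left-hand side of the lemma. Since $\bar{w}^\alpha - \frac{w_i^\alpha}{n\pi_i} = -\frac{1}{n\pi_i}\big(w_i^\alpha - n\pi_i\bar{w}^\alpha\big)$, setting $\sigma_i := \big\Vert w_i^\alpha - n\pi_i\bar{w}^\alpha\big\Vert$ we obtain $\big\Vert \bar{w}^\alpha - \frac{w_i^\alpha}{n\pi_i}\big\Vert = \frac{\sigma_i}{n\pi_i}$, so the target inequality becomes
\begin{equation*}
\sum_{i=1}^n \frac{\sigma_i}{n\pi_i} \leq \frac{1}{n}\Big(\sum_{k=1}^n \frac{1}{\pi_k}\Big)^{\frac{1}{2}}\Big(\sum_{i=1}^n \frac{\sigma_i^2}{\pi_i}\Big)^{\frac{1}{2}}.
\end{equation*}

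The final step is the weighted Cauchy--Schwarz inequality. Writing $\frac{\sigma_i}{\pi_i} = \frac{1}{\sqrt{\pi_i}}\cdot\frac{\sigma_i}{\sqrt{\pi_i}}$ and applying Cauchy--Schwarz yields $\sum_{i=1}^n \frac{\sigma_i}{\pi_i} \leq \big(\sum_{i=1}^n \frac{1}{\pi_i}\big)^{\frac{1}{2}}\big(\sum_{i=1}^n \frac{\sigma_i^2}{\pi_i}\big)^{\frac{1}{2}}$, and dividing by $n$ gives exactly the required bound. I do not expect a genuine obstacle in this argument; the only points demanding care are correctly identifying the block of $(W^{\infty}\otimes I_d)w^\alpha$ through \eqref{eq-5-1} and tracking the factors $n\pi_i$ when passing between $\big\Vert \bar{w}^\alpha - \frac{w_i^\alpha}{n\pi_i}\big\Vert$ and $\sigma_i$.
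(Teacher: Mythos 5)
Your proposal is correct and follows essentially the same argument as the paper: both rewrite $\bigl\Vert \bar{w}^\alpha - \frac{w_i^\alpha}{n\pi_i}\bigr\Vert$ as $\frac{1}{n\pi_i}\Vert w_i^\alpha - n\pi_i\bar{w}^\alpha\Vert$, apply the weighted Cauchy--Schwarz inequality with the splitting $\frac{1}{\pi_i} = \frac{1}{\sqrt{\pi_i}}\cdot\frac{1}{\sqrt{\pi_i}}$, and use \eqref{eq-5-1} to identify the resulting quantity with $\Vert w^\alpha-(W^{\infty} \otimes I_d)w^\alpha\Vert_{\pi\otimes 1_d}$. The only difference is cosmetic: you invoke \eqref{eq-5-1} at the start rather than at the end.
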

\begin{proof}
Using the Cauchy-Schwartz inequality, we estimate
\begin{equation*}
\begin{split}
\sum_{i=1}^n \Big\| \bar{w}^{\alpha}- \frac{w_i^{\alpha}}{n\pi_i}\Big\| &=\frac{1}{n}\sum_{i=1}^n\frac{1}{\pi_i} \Big\|  w_i^\alpha -n\pi_i\bar{w}^{\alpha}\Big\| 
\\
&\leq \frac{1}{n}\Big(\sum_{i=1}^n\frac{1}{\pi_i}\Big)^{\frac{1}{2}}\Big( \sum_{i=1}^n\frac{1}{\pi_i}\Big\| w_i^\alpha -n\pi_i\bar{w}^{\alpha}\Big\|^2\Big)^{\frac{1}{2}}.
\end{split}
\end{equation*}
 By the definition of weighted ($\pi\otimes 1_d$)-norm and \eqref{eq-5-1}, we get
\begin{equation*}
\begin{split}
\sum_{i=1}^n \Big\| \bar{w}^{\alpha}- \frac{w_i^{\alpha}}{n\pi_i}\Big\| &\leq \frac{\sqrt{\sum_{k=1}^n\frac{1}{\pi_k}}}{n}\Vert w^\alpha-n\pi\otimes \bar{w}^{\alpha}\Vert_{\pi\otimes 1_d}
\\
&= \frac{\sqrt{\sum_{k=1}^n\frac{1}{\pi_k}}}{n}\Vert w^\alpha-(W^{\infty} \otimes I_d)w^\alpha\Vert_{\pi\otimes 1_d}.
\end{split}
\end{equation*}
The proof is done.
\end{proof}
Recall that we denote $\nabla F(\frac{w}{n\pi}) = (\nabla f_1 (\frac{w_1}{n\pi_1})^\top , \cdots, \nabla f_n (\frac{w_n}{n\pi_n})^\top )^\top $. In the following we will find a bound of the weighted consensus error $\Vert w^\alpha-(W^{\infty} \otimes I_d)w^\alpha\Vert_{\pi\otimes 1_d}$.

\begin{prop}\label{pro-5-5}For $\alpha \in (0,\alpha_0]$, the fixed point $w^\alpha$ of $T_{\alpha}$ satisfies
\begin{equation*}
\Vert w^\alpha-(W^{\infty} \otimes I_d)w^\alpha\Vert_{\pi\otimes 1_d} \leq  \frac{\alpha\rho}{1-\rho}\Big[ \frac{LR}{n\pi_{min}} +Q\Big]
\end{equation*} 
\end{prop}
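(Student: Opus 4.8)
The plan is to turn the fixed-point identity $w^\alpha = T_\alpha(w^\alpha)$ into a self-referential estimate for the consensus error by exploiting the algebraic structure of the projection $W^\infty = \pi 1_n^\top$. Writing $M = W\otimes I_d$ and $M^\infty = W^\infty \otimes I_d$, I first note from \eqref{eq-1-21} and \eqref{eq-3-18} that $T_\alpha(w) = M\big(w - \alpha\nabla F(w/n\pi)\big)$, so the fixed-point equation reads $w^\alpha = M\big(w^\alpha - \alpha\nabla F(w^\alpha/n\pi)\big)$. Applying $(I-M^\infty)$ to both sides and using $W^\infty W = W^\infty$ (a consequence of column stochasticity $1_n^\top W = 1_n^\top$ and $W\pi=\pi$), one has $(I-M^\infty)M = M - M^\infty$, which gives
\begin{equation*}
(I-M^\infty)w^\alpha = (M - M^\infty)\big(w^\alpha - \alpha\nabla F(w^\alpha/n\pi)\big).
\end{equation*}

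The crucial second step is to rewrite $(M-M^\infty)w^\alpha$ so that the consensus error reappears on the right. Using the three identities $W W^\infty = W^\infty$, $W^\infty W = W^\infty$ and $(W^\infty)^2 = W^\infty$ (all following from $W\pi=\pi$, $1_n^\top W = 1_n^\top$ and $1_n^\top\pi = 1$), a direct computation yields $(M-M^\infty)(I-M^\infty) = M - M^\infty$, so that $(M-M^\infty)w^\alpha = (M-M^\infty)(I-M^\infty)w^\alpha$. Substituting this and taking the $\pi\otimes 1_d$ norm gives
\begin{equation*}
\Vert (I-M^\infty)w^\alpha\Vert_{\pi\otimes 1_d} \leq |||M-M^\infty|||_{\pi\otimes 1_d}\,\Vert (I-M^\infty)w^\alpha\Vert_{\pi\otimes 1_d} + \alpha\,|||M-M^\infty|||_{\pi\otimes 1_d}\,\Vert \nabla F(w^\alpha/n\pi)\Vert_{\pi\otimes 1_d}.
\end{equation*}
By Lemma \ref{lem-5-2} and the definition of $\rho$, the induced norm equals $|||W-W^\infty|||_\pi = \rho \in (0,1)$, so absorbing the first term on the left produces $\Vert (I-M^\infty)w^\alpha\Vert_{\pi\otimes 1_d} \leq \frac{\alpha\rho}{1-\rho}\Vert\nabla F(w^\alpha/n\pi)\Vert_{\pi\otimes 1_d}$.

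It then remains to bound $\Vert\nabla F(w^\alpha/n\pi)\Vert_{\pi\otimes 1_d}$ by $\frac{LR}{n\pi_{min}} + Q$. I would split by the triangle inequality as $\Vert\nabla F(w^\alpha/n\pi)\Vert_{\pi\otimes 1_d} \leq \Vert\nabla F(0)\Vert_{\pi\otimes 1_d} + \Vert\nabla F(w^\alpha/n\pi) - \nabla F(0)\Vert_{\pi\otimes 1_d}$; the first term is exactly $Q$. For the second, the $L_k$-smoothness of $f_k$ gives $\Vert\nabla f_k(w_k^\alpha/(n\pi_k)) - \nabla f_k(0)\Vert \leq \frac{L}{n\pi_k}\Vert w_k^\alpha\Vert$, and bounding $1/\pi_k \leq 1/\pi_{min}$ inside the weighted norm yields $\Vert\nabla F(w^\alpha/n\pi) - \nabla F(0)\Vert_{\pi\otimes 1_d} \leq \frac{L}{n\pi_{min}}\Vert w^\alpha\Vert_{\pi\otimes 1_d}$. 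The uniform bound $\Vert w^\alpha\Vert_{\pi\otimes 1_d}\leq R$ of Theorem \ref{thm-2-4} upgrades this to $\frac{LR}{n\pi_{min}}$, and combining the pieces gives the claim.

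The main obstacle I anticipate is the bookkeeping in the second step, namely getting the projection identities right so that $(M-M^\infty)w^\alpha$ can legitimately be replaced by $(M-M^\infty)(I-M^\infty)w^\alpha$. This is precisely what makes the consensus error appear on both sides of the inequality and allows it to be absorbed using $\rho<1$. A naive bound of $(M-M^\infty)w^\alpha$ by $\rho\Vert w^\alpha\Vert_{\pi\otimes 1_d}$ would leave an $O(R)$ term rather than an $O(\alpha)$ one, destroying the $O(\alpha)$ consensus estimate that is essential for the final $O(\alpha)$-neighborhood conclusion of Theorem \ref{thm-1-2}. By contrast, the smoothness estimate of the last step is routine.
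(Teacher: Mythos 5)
Your proposal is correct and follows essentially the same route as the paper's proof: the identity $(M-M^\infty)(I-M^\infty)=M-M^\infty$ (via $WW^\infty=W^\infty W=(W^\infty)^2=W^\infty$) is exactly the algebraic step in the paper's chain of equalities, after which both arguments absorb the consensus error using $|||W-W^\infty|||_\pi=\rho<1$ from Lemma \ref{lem-5-2}, and bound $\Vert \nabla F(w^\alpha/n\pi)\Vert_{\pi\otimes 1_d}$ by $\frac{LR}{n\pi_{min}}+Q$ via $L$-smoothness, $1/\pi_k\le 1/\pi_{min}$, and the uniform bound of Theorem \ref{thm-2-4}.
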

where $\pi_{min} = \min_{1\leq j\leq n}\pi_j$ and $Q=\|\nabla F(0)\|_{\pi\otimes 1_d}$.
\begin{proof}
Using the fact that $w_\alpha$ is the fixed point of $T_\alpha$, we proceed as follows: 
\begin{equation}\label{eq-5-3}
\begin{split}
&\Vert w^\alpha-(W^{\infty} \otimes I_d)w^\alpha\Vert_{\pi\otimes 1_d}
\\
&=\Vert((I_n-W^{\infty})\otimes I_d)w^\alpha\Vert_{\pi\otimes 1_d}
\\
 &=\Big\|[((I_n-W^{\infty})W)\otimes I_d]\Big(w^\alpha-\alpha \nabla F\Big(\frac{w^\alpha}{n\pi}\Big)\Big)\Big\|_{\pi\otimes 1_d}
\\
&=\Big\|[(W-W^{\infty})\otimes I_d]\Big[((I_n-W^{\infty})\otimes I_d)w^\alpha-\alpha \nabla F\Big(\frac{w^\alpha}{n\pi}\Big)\Big]\Big\|_{\pi\otimes 1_d},
\end{split}
\end{equation}
where we have used $W^{\infty}W=WW^{\infty}=(W^{\infty})^2=W^{\infty}$ in the second and the third equalities. By the triangle inequality and Lemma \ref{lem-5-2}, we estimate \eqref{eq-5-3} as follows:
\begin{equation*}
\begin{split}
&\Vert w^\alpha-(W^{\infty} \otimes I_d)w^\alpha\Vert_{\pi\otimes 1_d}
\\
&\leq |||(W-W^{\infty})\otimes I_d|||_{\pi\otimes 1_d}\Big[\Vert((I_n-W^{\infty})\otimes I_d)w^\alpha\Vert_{\pi\otimes 1_d}+\alpha\Big\| \nabla F\Big(\frac{w^\alpha}{n\pi}\Big)\Big\|_{\pi\otimes 1_d}\Big]
\\
&=|||W-W^{\infty}|||_\pi\Big[\Vert w^\alpha-(W^{\infty} \otimes I_d)w^\alpha\Vert_{\pi\otimes 1_d} +\alpha\Big\| \nabla F\Big(\frac{w^\alpha}{n\pi}\Big)\Big\|_{\pi\otimes 1_d}\Big]
\\
&= \rho \Vert w^\alpha-(W^{\infty} \otimes I_d)w^\alpha\Vert_{\pi\otimes 1_d} +\alpha  \rho \Big\| \nabla F\Big(\frac{w^\alpha}{n\pi}\Big)\Big\|_{\pi\otimes 1_d}
\end{split}
\end{equation*} 
which leads to
\begin{equation}\label{eq-5-6}
\Vert w^\alpha-(W^{\infty} \otimes I_d)w^\alpha\Vert_{\pi\otimes 1_d} \leq \frac{ \alpha\rho}{1-\rho}\Big\| \nabla F\Big(\frac{w^\alpha}{n\pi}\Big)\Big\|_{\pi\otimes 1_d}.
\end{equation} 
Considering the $L$-smoothness of $f_i$ we find
\begin{equation*}
\begin{split}
\Big\| \nabla f_i \Big( \frac{w_i^{\alpha}}{n\pi_i}\Big)\Big\|& \leq \Big\|\nabla f_i \Big( \frac{w_i^{\alpha}}{n\pi_i}\Big) - \nabla f_i (0) \Big\| + \|\nabla f_i (0)\|
\\
&\leq L\Big\| \frac{w_i^{\alpha}}{n\pi_i} \Big\| + \|\nabla f_i (0)\|.
\end{split}
\end{equation*}
Using this we have
\begin{equation*}
\begin{split}
\Big\|\nabla F\Big(\frac{w_{\alpha}}{n\pi}\Big)\Big\|_{\pi \otimes 1_d} & = \Big( \sum_{j=1}^n \frac{1}{\pi_j}\Big\|\nabla f_j \Big( \frac{w_j^{\alpha}}{n\pi_j}\Big) \Big\|^2\Big)^{\frac{1}{2}}
\\
& \leq \Big( \sum_{j=1}^n \frac{1}{\pi_j} \Big( L^2 \frac{\|w_j^{\alpha}\|^2}{(n\pi_j)^2} \Big) \Big)^{\frac{1}{2}} + \Big( \sum_{j=1}^n \frac{1}{\pi_j}\|\nabla f_j (0)\|^2 \Big)^{\frac{1}{2}}
\end{split}
\end{equation*}
Applying Theorem \ref{thm-2-4} we get
\begin{equation*}
\begin{split}
\Big\|\nabla F\Big(\frac{w_{\alpha}}{n\pi}\Big)\Big\|_{\pi \otimes 1_d}& \leq \frac{L}{n\pi_{min}}\Big( \sum_{j=1}^n \frac{1}{\pi_j} \|w_j^{\alpha}\|^2 \Big)^{\frac{1}{2}} + Q
\\
&= \frac{L}{n\pi_{min}}\|w^\alpha\|_{\pi\otimes 1_d} +Q
\\
& \leq \frac{L}{n\pi_{min}}R +Q
\end{split}
\end{equation*}
where $Q=\Big( \sum_{j=1}^n \frac{1}{\pi_j}\|\nabla f_j (0)\|^2 \Big)^{\frac{1}{2}}$.
Putting this estimate in \eqref{eq-5-6}, we have
\begin{equation} \label{eq-5-4}
\Vert w^\alpha-(W^{\infty} \otimes I_d)w^\alpha\Vert_{\pi\otimes 1_d} \leq \frac{\alpha\rho}{1-\rho}\Big( \frac{LR}{n\pi_{min}}+Q\Big).
\end{equation} 
The proof is done.
\end{proof} 

Combining the above lemmas, we proceed to prove the result of Theorem \ref{thm-2-6}.
\begin{proof}[Proof of Theorem \ref{thm-2-6}]
Applying Lemma \ref{lem-5-4} to Lemma \ref{lem-5-3}, we have
\begin{equation}\label{eq-5-5}
\|\bar{w}^{\alpha} -x_*\| \leq \frac{L}{n\gamma}\sqrt{\sum_{k=1}^n\frac{1}{\pi_k}}\Vert w^\alpha-(W^{\infty} \otimes I_d)w^\alpha\Vert_{\pi\otimes 1_d}.
\end{equation} 
Using the relation \eqref{eq-5-1} in the inequality \eqref{eq-5-10} we find
\begin{equation*}
\Vert w^\alpha-n\pi \otimes x_* \Vert_{\pi\otimes 1_d} \leq \Vert w^\alpha-(W^{\infty} \otimes I_d)w^\alpha\Vert_{\pi\otimes 1_d} +n \Vert \bar{w}^\alpha- x_* \Vert.
\end{equation*}
Inserting \eqref{eq-5-5} and Proposition \ref{pro-5-5} to the above inequality, we get
\begin{equation*}
\begin{split}
\Vert w^\alpha-n\pi \otimes x_* \Vert_{\pi\otimes 1_d}  &\leq \Big(1+\frac{L}{\gamma}\sqrt{\sum_{j=1}^n \frac{1}{\pi_j}}\Big)\Vert w^\alpha-(W^{\infty} \otimes I_d)w^\alpha\Vert_{\pi\otimes 1_d}
\\
& \leq \frac{\alpha \rho}{1-\rho}\Big(1+\frac{L}{\gamma}\sqrt{\sum_{j=1}^n \frac{1}{\pi_j}}\Big)\Big( \frac{LR}{n\pi_{min}}+Q\Big).
\end{split}
\end{equation*}
The proof is done.
\end{proof}

Now we are ready to prove Theorem \ref{thm-1-2}.
\begin{proof}[Proof of Theorem \ref{thm-1-2}]Consider the two cases and the value $\alpha_0 >0$ of Theorem \ref{thm-2-3}. 
Consider the sequence $w(t) \in \mathbb{R}^{nd}$ of the algorithm \eqref{eq-1-3} with stepsize $\alpha >0$ with initial point $w(0)$.  Then, the result of Theorem \ref{thm-2-5} gives
\begin{equation*} 
\Vert w(t+1)-w^\alpha\Vert_{\pi\otimes1_d} \leq V_\alpha (1-C\alpha)^{t+1}\Vert w(0)-w^\alpha\Vert_{\pi\otimes 1_d} + \mathcal{R}(t).
\end{equation*} 
Combining this with the result of Theorem \ref{thm-2-6} and the triangle inequality gives
\begin{equation*}
\begin{split}
\Vert w (t+1)-n\pi \otimes x_* \Vert_{\pi\otimes 1_d}  \leq&  V_\alpha (1-C\alpha)^{t+1}\Vert w(0)-w^\alpha\Vert_{\pi\otimes 1_d} + \mathcal{R}(t)
\\
&\quad + \frac{\alpha \rho}{1-\rho}\Big(1+\frac{L}{\gamma}\sqrt{\sum_{j=1}^n \frac{1}{\pi_j}}\Big)\Big( \frac{LR}{n\pi_{min}}+Q\Big).
\end{split}
\end{equation*}
Therefore we have
\begin{equation*}
\lim_{t \rightarrow \infty} \Vert w  (t+1)-n\pi \otimes x_* \Vert_{\pi\otimes 1_d}   \leq \frac{\alpha \rho}{1-\rho}\Big(1+\frac{L}{\gamma}\sqrt{\sum_{j=1}^n \frac{1}{\pi_j}}\Big)\Big( \frac{LR}{n\pi_{min}}+Q\Big),
\end{equation*}
which implies that for each $1\leq k \leq n$, the sequence $\{ w_k (t)\}_{t \geq 0}$ converges  to an $O(\alpha)$-neighborhood of $n\pi_k x_*$ exponentially fast. We recall from \eqref{eq-1-2} and \eqref{eq-1-6} that $z_k (t) = w_k (t)/y_k (t)$ and    $y_k (t)$ converges to $n\pi_k$ exponentially fast. Therefore, the sequence $\{z_k (t)\}_{t \geq 0}$ converges to an $O(\alpha)$-neighborhood of  $x_*$ linearly as $t \rightarrow \infty$. The proof is done.
\end{proof}

\section{Simulation}\label{sec-6}
In this section, we provide numerical experiments for the gradient-push algorithm \eqref{eq-1-2} that support the   convergence results given in this paper. Let $w_{\alpha}(t) \in \mathbb{R}^{nd}$ and $z_k (t)$ are variables of the gradient-push algorithm \eqref{eq-1-2} (see also \eqref{eq-1-3}). We also denote by $w^{\alpha}\in \mathbb{R}^{nd}$ the fixed point of the operator $T^{\alpha}$. Then we aim to provide numerical results to support the following results of this paper: 
\begin{itemize}
\item Theorem \ref{thm-2-3}: Contraction property of $T_{\alpha}$ for $\alpha \in (0,\alpha_0]$. 
\item Theorem \ref{thm-2-5}: $\lim_{t \rightarrow 0} \|w (t) - w^{\alpha}\|_{\pi \otimes 1_d} =0$ for each stepsize $\alpha \in (0,\alpha_0]$.
\item Theorem \ref{thm-2-6}: $\|w^{\alpha} - n\pi \otimes x_*\|_{\pi \otimes 1_d} = O(\alpha)$ for $\alpha \in (0,\alpha_0]$.
\item Theorem \ref{thm-1-2}: $\lim_{t \rightarrow \infty}\sum_{k=1}^{n} \|z_k (t) -x_*\| =O(\alpha)$ for each stepsize $\alpha \in (0,\alpha_0]$.
\end{itemize}
We set $n=20$ and the mixing matrix $W$ is generated by a random directed graph $G$ with $n$ vertices by letting each arc to occur with probability $p$ = 0.7, except for the self-loops, which must exist with probability $1$.

\medskip 

\noindent \textbf{Settings for the numerical experiment.}
We choose the local costs functions satisfying one of two conditions given in the main results:
 \begin{enumerate}
\item[Case 1:] Each $f_j$ is $\mu_j$-strongly convex and $L_j$-smooth for $1 \leq j \leq n$. The $\alpha_0 >0$ is set to be  $\alpha_0 =\min_{1 \leq j \leq n} \frac{2n\pi_j}{L_j +\mu_j}$. For this we consider the regularized least square problem:
\begin{equation}\label{eq-6-1}
\min_{x \in \mathbb{R}^{n}}\Big\{f(x) := \frac{1}{2n}\sum_{j=1}^n \Big(\Vert A_j x - b_j \Vert^2 +\delta \|x\|^2\Big)\Big\},
\end{equation}
where $A_j \in \mathbb{R}^{m\times d}$ and $b_j \in \mathbb{R}^{m}$ with $d=3$, $m=4$ and $\delta =2$. The value $\alpha_0$ is cimputed as  $\alpha_0=\min_{1 \leq k \leq n} \frac{2n\pi_k}{L_k +\mu_k}$ where $L_k=\max(\lambda(A_k^\top A_k+\delta I_d))$ and $\mu_k=\min(\lambda(A_k^\top A_k+\delta I_d))$. Each element of the matrix $A_j$ and the vector $b_j$ for $1 \leq j\leq n$ is randomly generated by the uniform distribution on $[0,1]$.
\item[Case 2:] Each cost $f_j$ is convex and $L_j$-smooth while the aggregate cost $f$= $\frac{1}{n}\sum_{j=1}^n f_j$ is $\mu$-strongly convex. The $\alpha_0 >0$ is defined as  $\alpha_0 = \min_{1 \leq k \leq n}\frac{2n\pi_k}{L_k+\epsilon}$ for a fixed $\ep>0$. We consider local cost given by a convex quadratic cost as follows:
\begin{equation}\label{eq-6-2}
\min_{x \in \mathbb{R}^{n}}\Big\{f(x) := \frac{1}{n}\sum_{j=1}^n \Big(\frac{1}{2} x^\top A_jx+b_jx \Big)\Big\},
\end{equation}
where $A_j \in \mathbb{R}^{d\times d}$ and $b_j \in \mathbb{R}^d$ with $d=10$. We generate $A_j$ as $A_j = R_jR_j^\top $ where each $R_j$ is $d \times m$ matrix with $m <d$ whose elements are randomly generated by the uniform distribution on  $(0,1)$. The elements of $b_j$ are generated randomly by the uniform distribution on $(0,1)$. The value $\alpha_0$ is computed as  $\alpha_0 = \min_{1 \leq k \leq n}\frac{2n\pi_k}{L_k+\epsilon}$ where $L_k=\max(\lambda(A_k))$ and $\epsilon =0.01$.
\end{enumerate}

\subsection{Contraction property of the mapping $T_{\alpha}$}
First we validate numerically the contraction property of $T_{\alpha}$ for $\alpha \in (0,\alpha_0]$ proved in Theorem \ref{thm-2-3}. We mention that for both Case 1 and Case 2, the local costs are given by the form $f_j(x) = \frac{1}{2}x^\top P_j x+q_j x$(ignore the constant part) for $1 \leq j \leq n$ with $P_j$ and $q_j$ given by
\begin{equation*}
(P_j, q_j) =\bigg\{\begin{array}{ll} (A_j^\top A_j + \delta I_d, - 2 A_j^\top b_j)&\textrm{for Case 1}
\\
(A_j, b_j) &\textrm{for Case 2}.
\end{array} 
\end{equation*}
and so  the operator $T_{\alpha}$ satisfies
\begin{equation*}
\begin{split}
&T_{\alpha}(x)- T_{\alpha}(y) 
\\
& = \Big( \sum_{j=1}^n W_{1j} (I-\alpha P_j) (x_j -y_j)^\top, \cdots, \sum_{j=1}^n W_{nj} (I-\alpha P_j) (x_j -y_j)^\top \Big)^\top
\\
&=:M_{\alpha}(x-y),
\end{split}
\end{equation*}
where $M_{\alpha}$ is an $nd\times nd$ matrix whose $(i,j)$-th block is given by the $d\times d$-matrix $W_{ij}(I-\alpha P_j)$ for $1\leq i, j \leq n$. The lipschitz constant $\mathcal{L}_{\alpha}$ of $T_{\alpha}$ is then computed as follows:
\begin{equation*}
\begin{split}
\mathcal{L}_{\alpha} &= \sup_{x \neq y} \frac{\|T_{\alpha}(x) -T_{\alpha}(y)\|_{\pi \otimes 1_d}}{\|x-y\|_{\pi \otimes 1_d}} = \sup_{x \neq y} \frac{\|M_{\alpha}(x-y)\|_{\pi \otimes 1_d}}{\|x-y\|_{\pi \otimes 1_d}}
\\
&=\sup_{x \in \mathbb{R}^{nd}\setminus \{0\}} \frac{\|D^{-1}M_{\alpha} x\|_{L^2 (\mathbb{R}^{nd})}}{\|D^{-1}x\|_{L^2 (\mathbb{R}^{nd})}},
\end{split}
\end{equation*}
where $D$ is an $nd\times nd$ matrix given by $ \textrm{Diag}(\sqrt{\pi_1}, \cdots, \sqrt{\pi_n})\otimes I_d$.
Letting $x=Dy$, we estimate
\begin{equation*}
\begin{split}
\mathcal{L}_{\alpha} &= \sup_{y \in \mathbb{R}^{nd}\setminus \{0\}} \frac{\|D^{-1} M_{\alpha} D  y\|_{L^2 (\mathbb{R}^{nd})}}{\|y\|_{L^2 (\mathbb{R}^{nd})}}
\\
& = ||| D^{-1} M_{\alpha} D |||_{L^2 \rightarrow L^2},
\end{split}
\end{equation*} We compute the value $\mathcal{L}_{\alpha}=||| D^{-1} M_{\alpha} D|||_{L^2 \rightarrow L^2}$ by applying the command \emph{norm} in Matlab. The graphs of $\mathcal{L}_{\alpha}$ with respect to $\alpha \in (0,2\alpha_0]$ are presented in Figure \ref{fig2} for Case 1 and Case 2. The results show that $\mathcal{L}_{\alpha} \leq 1- C\alpha$ for $\alpha \in (0,\alpha_0]$ with a value $C>0$ which confirms the contraction property \eqref{eq-2-11} of \mbox{Theorem \ref{thm-2-3}.}
\begin{figure}[htbp]
\includegraphics[height=5cm, width=7.3cm]{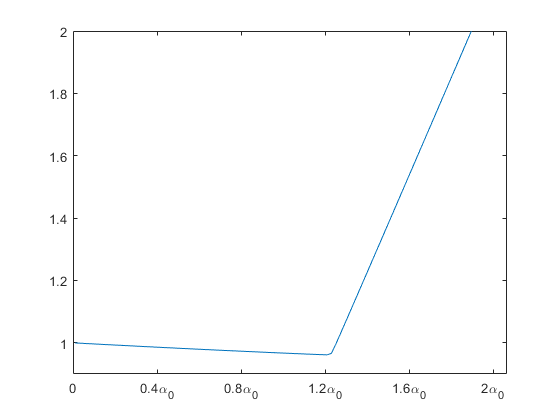}
\includegraphics[height=5cm, width=7.3cm]{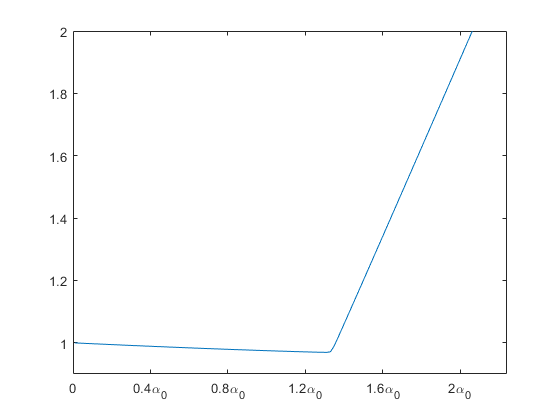}
\vspace{-0.3cm}\caption{The graph of $\mathcal{L}_{\alpha}$ with respect to $\alpha \in (0,2\alpha_0]$. Left: Case 1. Right: Case 2.}\label{fig2}
\end{figure}

For each $\alpha \in (0,\alpha_0]$, we compute the fixed point $w^{\alpha}$ of $T_{\alpha}$ numerically using the property that $\lim_{t \rightarrow \infty}(T_{\alpha})^t (w(0)) = w^{\alpha}$ to validate Theorem \ref{thm-2-5} and Theorem \ref{thm-2-6} in the below.


\subsection{Convergence results for the strongly convex  case (Case 1)}\label{subsec-6-1}

We compute the minimizer $x_*$ of \eqref{eq-6-1} by the following formula 
\begin{equation*}
x_* = \Big\{\sum_{j=1}^n \Big(A_j ^\top A_j + \delta I_d\Big)\Big\}^{-1}\sum_{j=1}^n A_j ^\top b_j.
\end{equation*}
Figure \ref{fig3} represents the graph of $\log_{10}\Big(\Vert w_{\alpha}(t) - w^\alpha \Vert_{\pi\otimes1_d}\Big)$ with respect to the iteration $t \geq 0$ for the case $\alpha = \alpha_0$ and the graph of $\Vert w^\alpha-n\pi \otimes x_* \Vert_{\pi\otimes 1_d}$ for stepsizes $\alpha \in (0,\alpha_0]$.
\begin{figure}[htbp]
\includegraphics[height=5cm, width=7.3cm]{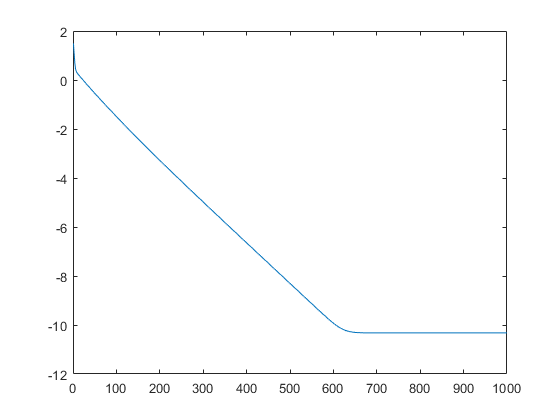} 
\includegraphics[height=5cm, width=7.3cm]{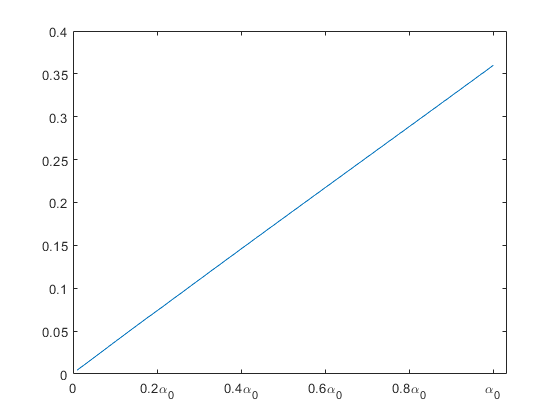} 
\vspace{-0.3cm}\caption{Left: The graph of $\log_{10}\Vert w_{\alpha}(t) - w^\alpha \Vert_{\pi\otimes1_d}$ for least square problem with stepsize $\alpha$ = $\alpha_0$, itertation $t=1,\cdots,1000$. Right: The graph  of the error $\Vert w^\alpha-n\pi \otimes x_* \Vert_{\pi\otimes 1_d}$ for stepsizes $\alpha \in (0,\alpha_0]$.}\label{fig3}
\end{figure}
As we expected in Theorem \ref{thm-2-5},  the distance $\Vert w_{\alpha}(t) - w^\alpha \Vert_{\pi\otimes1_d}$ converges to $0$ exponentially fast(up to the machine error in the experiment) and the distance $\Vert w^\alpha-n\pi \otimes x_* \Vert_{\pi\otimes 1_d}$  between minimizer $x_*$ and the fixed point $w^\alpha$ is $O(\alpha)$, which coincides with the result of Theorem \ref{thm-2-6}.

Lastly, we test the convergence property of the gradient-push algorithm \eqref{eq-1-2} towards the minimizer $x_*$ of the global cost $f$
with stepsizes
\begin{equation*} 
\alpha \in \Big\{ 0.2 \alpha_0,  0.5 \alpha_0,  \alpha_0, 1.3 \alpha_0 \Big\}.
\end{equation*}
 Figure \ref{fig4} shows the graph of $ \|w_\alpha (t) -x_*\|_{\pi\otimes 1_d}$ with respect to $t \in \mathbb{N}$. The result shows that the gradient-push algorithm with stepsize $\alpha \in \{0.2\alpha_0, 0.5\alpha_0, \alpha_0\}$ converges linearly to an $O(\alpha)$-neighborhood of the minimizer  $x_*$ as expected by Theorem \ref{thm-1-2} while the algorithm with stepsize $\alpha = 1.3\alpha_0$ diverges.
\begin{figure}[htbp]
\includegraphics[height=5cm, width=7.3cm]{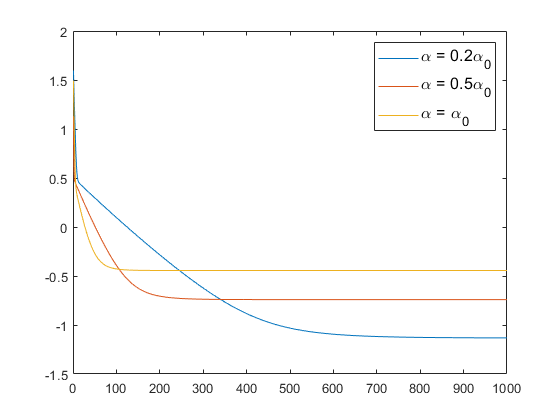}
\includegraphics[height=5cm, width=7.3cm]{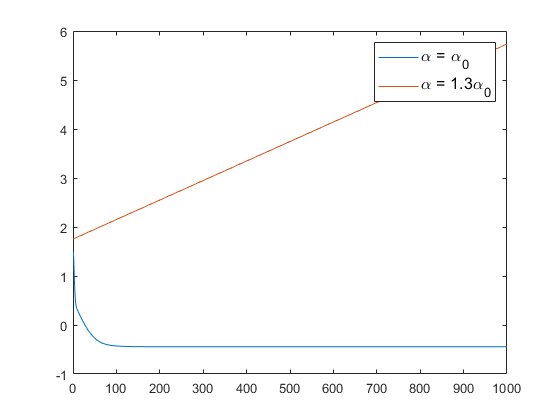}  
\vspace{-0.3cm}\caption{Both graphs show  the $\log_{10}\|w_{\alpha}(t)  - n\pi \otimes x_*\|_{\pi \otimes 1_d}$ for regularized least square problem with various stepsizes. Left: Cases of the convergent results.  Right: Case of the divergent result.}\label{fig4}
\end{figure}

\subsection{Convergence results for the   convex  case (Case 2)}
The minimizer of \eqref{eq-6-2} is given by
\begin{equation*}
x_* =- \Big(\sum_{j=1}^n A_j\Big)^{-1}\sum_{j=1}^n b_j^\top .
\end{equation*} 

Figure \ref{fig5} presents the graph of $\log_{10}\Big(\Vert w_{\alpha}(t) - w^\alpha \Vert_{\pi\otimes1_d}\Big)$ for $\alpha = \alpha_0$ with respect to the iteration $t\geq 0$ and the graph of $\Vert w^\alpha-n\pi \otimes x_* \Vert_{\pi\otimes 1_d}$ with stepsizes $\alpha \in (0,\alpha_0]$. The measure $\Vert w_{\alpha}(t) - w^\alpha \Vert_{\pi\otimes1_d}$ converges to $0$(up to the machine error in the experiment) for $\alpha =\alpha_0$ as predicted by Theorem \ref{thm-2-5}. Also the distance $\Vert w^\alpha-n\pi \otimes x_* \Vert_{\pi\otimes 1_d}$ is $O(\alpha)$, as expected in the result of  Theorem \ref{thm-2-6}.
\begin{figure}[htbp]
\includegraphics[height=5cm, width=7.3cm]{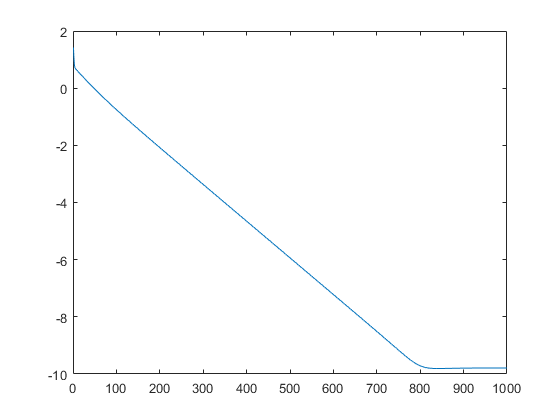} 
\includegraphics[height=5cm, width=7.3cm]{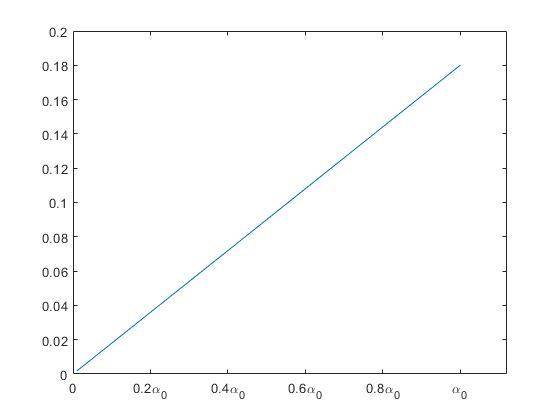} 
\vspace{-0.3cm}\caption{Left: The graphs of $ \Vert w_{\alpha}(t) - w^\alpha \Vert_{\pi\otimes1_d}$ for the convex quadratic problem with stepsize $\alpha$ = $\alpha_0$, itertation $t=1,\cdots,1000$. Right: The graphs of the distance $\Vert w^\alpha-n\pi \otimes x_* \Vert_{\pi\otimes 1_d}$ for stepsizes $\alpha \in (0,\alpha_0]$.}\label{fig5}
\end{figure}

Next we test the gradient-push algorithm with stepsize  $\alpha \in \Big\{ 0.2 \alpha_0,  0.5\alpha_0,    \alpha_0,  1.45\alpha_0 \Big\}$. The graphs of $  \|w_\alpha (t) -x_*\|_{\pi\otimes 1_d}$ with respect to $t \in \mathbb{N}$ are presented in Figure \ref{fig6}. It turns out that the algorithm with stepsize $\alpha \in \{0.2 \alpha_0, 0.5\alpha_0, \alpha_0\}$ converges to an $O(\alpha)$-neighborhood of the minimizer $x_*$ as predicted by Theorem \ref{thm-1-2} but the algorithm with stepsize $\alpha =1.45\alpha_0$ diverges.
\begin{figure}[htbp]
\includegraphics[height=5cm, width=7.3cm]{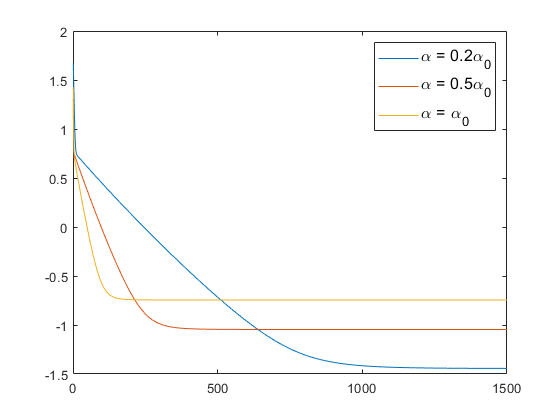}
\includegraphics[height=5cm, width=7.3cm]{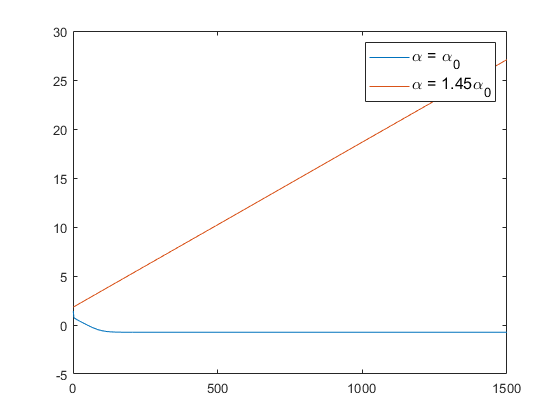}  
\vspace{-0.3cm}\caption{The graphs of $\log_{10}\big(\|w_{\alpha}(t)  - n\pi \otimes x_*\|_{\pi \otimes 1_d}\big)$ for convex quadratic problem with various stepsizes. Left: Cases of the convergent results. Right: Case of the divergent result.}\label{fig6}
\end{figure}


\end{document}